\documentclass[preprint,12pt]{elsarticle}
\usepackage{amsmath}
\usepackage[titletoc,title]{appendix}
\usepackage{amsbsy}
\usepackage{amssymb}
\usepackage{amsthm}
\usepackage{natbib}
\usepackage{anyfontsize}
\usepackage{epsfig}
\usepackage{epstopdf}
\usepackage{wrapfig}
\usepackage{color,cancel}
\usepackage{ulem}
\usepackage{fullpage}
\usepackage{float}
\usepackage{breqn}
\usepackage{graphicx}
\usepackage{caption}
\usepackage{subcaption}
\usepackage{bbm}
\usepackage{tikz}
\usepackage{enumitem}
\usepackage[dvipsnames]{xcolor}
\usepackage{url} 
\pdfoutput=1

\usepackage{hyperref}

\usepackage{amsmath,amsfonts,amssymb,color,graphicx}

\newtheorem{theorem}{Theorem}[section]
\newtheorem{algorithm}{Algorithm}[section]
\newtheorem{corollary}{Corollary}[section]
\newtheorem{lemma}{Lemma}[section]
\newtheorem{proposition}{Proposition}[section]

\newtheorem{remark}{Remark}[section]

\numberwithin{equation}{section}

\newcommand{\M}{\mathcal{M}}
\newcommand{\Hdiv}{\bfH_{\mathrm{div}}}

\newcommand{\bu}{{\bf u}}
\newcommand{\bH}{{\bf H}}
\newcommand{\bD}{{\bf D}}

\newcommand{\bP}{{\bf P}}

\newcommand{\bv}{{\bf v}}
\newcommand{\bw}{{\bf w}}

\newcommand{\be}{{\bf e}}

\newcommand{\bZ}{{\bf Z}}
\newcommand{\bff}{{\bf f}}
\newcommand{\bfg}{{\bf g}}

\newcommand{\bphi}{{\boldsymbol \phi}}

\newcommand{\bfH}{{\bf H}}

\newcommand{\Reynolds}{\mathrm{Re}}
\newcommand{\Divergence}{\nabla \cdot}

\newcommand{\bK}{{\bf K}}
\newcommand{\bX}{{\bf X}}

\newcommand{\bL}{{\bf L}}

\newcommand{\bB}{{\bf B}}

\newcommand{\bg}{{\bf g}}
\newcommand{\bJ}{{\bf J}}

\usepackage{color}
\date{}
\usepackage[section]{placeins}
\makeatletter
\AtBeginDocument{%
	\expandafter\renewcommand\expandafter\subsection\expandafter{%
		\expandafter\@fb@secFB\subsection
	}%
}

\journal{Mathematics and Computers in Simulation}

\begin{document}

\begin{frontmatter}
\title{Improved Arrow-Hurwicz method for Stationary Inductionless 
Magnetohydrodynamics System}

\author[METU]{Duygu Uludağ}
\ead{duygimath@gmail.com}

\author[Atilim]{Fatma G. Eroglu}
\ead{fatma.guler@atilim.edu.tr}

\author[Sharjah]{Aziz Takhirov\corref{mycorrespondingauthor}\fnref{footnote1}}
\cortext[mycorrespondingauthor]{Corresponding author}
\ead{atakhirov@sharjah.ac.ae}

\author[METU]{Song\"{u}l Kaya}
\ead{smerdan@metu.edu.tr}

\address[METU]{Department of Mathematics,  Middle East Technical University, 06800 Ankara, Türkiye}

\address[Atilim]{Department of Mathematics, At{\i}l{\i}m University, 06830, Ankara, Türkiye}

\address[Sharjah]{Department of Mathematics, University of Sharjah, UAE}

\begin{abstract}
In this work, we propose a new Arrow-Hurwicz iterative scheme designed to solve the steady inductionless magnetohydrodynamics system. The main feature of the proposed scheme is the introduction of new penalty terms in the current density equation. These terms play a central role in effectively controlling the unfavorable mixed terms that commonly arise in such formulations. The proposed method is shown to achieve geometric convergence. Numerical tests affirm the efficiency of the new scheme without compromising accuracy.
\end{abstract}



\begin{keyword}
Inductionless MHD equation \sep Arrow-Hurwicz method \sep iterative method

\MSC[2010] 65M60 \sep 76D05
\end{keyword}
\end{frontmatter}

\section{Introduction}
Inductionless magnetohydrodynamics (IMHD) is applied to model MHD scenarios in which the magnetic field generated by currents within the conducting fluid is negligibly small relative to the external magnetic field $\mathbf{B}$. In a nondimensional form, the corresponding system in a simply-connected domain $\Omega$ is:
 \begin{eqnarray}
 -\Reynolds^{-1} \Delta\bu + (\bu\cdot\nabla)\bu +\nabla p-\kappa\bJ\times\bB=\bff \mbox{ in} \, \Omega, \label{eq:m1}\\
          \bJ+\nabla\phi-\bu\times\bB={\bf{g}} \mbox{ in} \, \Omega, \label{eq:m2}
   \\
\nabla \cdot \bu=0,\quad \nabla \cdot \bJ=0 \mbox{ in} \, \Omega, \label{eq:m3}
\\
 \bu=\mathbf{0},  \quad \bJ \cdot{\bf {n}}=0 \mbox{ on} \, \Gamma, \label{eq:m4}
\end{eqnarray}
where $\bu$ is the fluid velocity, $p$ is the kinematic pressure, $\bJ$ is the current density, and $\phi$ is the electric potential. In addition, $\kappa$ and $\Reynolds$ denote the coupling number and the Reynolds number, respectively. $\bff$ and $\bfg$ are the forcing terms. Equation \eqref{eq:m2} is what remains after combining Ohm's Law with Faraday's law in a simply-connected domain, whereas the second equation of \eqref{eq:m3} is the charge conservation equation. For simplicity, we assume insulating boundary conditions for the electric current $\bJ$ and homogeneous Dirichlet conditions for the velocity field. The analysis developed herein could be easily applied to perfectly conducting boundary conditions as well. 
 
Many industrial processes are modelled by this system, such as MHD pumps, MHD generators \cite{MHDbook1,MHDbook2}, and test blanket modules in nuclear fusion reactors \cite{mistrangelo2008magnetohydrodynamic,mistrangelo2009influence}.

Given their importance, the IMHD equations \eqref{eq:m1}–\eqref{eq:m4} have been well known since the 1960s, and there is 
 a significant amount of literature dedicated to their numerical approximations. Among these studies, \cite{petersonmhd} examined the well-posedness of weak solutions alongside their finite element approximations. In their formulation, the current density field $\bJ$ is removed from the system, and the electric scalar potential $\phi$ is governed by a Poisson equation. Layton \cite{LaytonMHD} et al. developed a two-level method and proved $L^2$-error estimates. 
In \cite{PLANAS20112977}, Badia et al. proposed a stabilized finite element method for the unsteady IMHD model. They consider a monolithic approximation with stabilizing terms, which leads to a coupled linear system. The coupling is then dealt with effective preconditioning and iterative solvers that could handle high
Hartmann number problems. {Zhang and Ding ~\cite{zhang} studied three iterative methods for approximating the stationary IMHD equations. These schemes rely on a fully coupled approach, necessitating the solution of a saddle-point linear system at every iterative step. A related work on fully divergence-free schemes is studied in \cite{Zhang2022}.}

Here we develop a new Arrow–Hurwicz (AH) \cite{AH1958} method that approximates \eqref{eq:m1}-\eqref{eq:m4}. First applied for steady incompressible Navier--Stokes equations \cite{Temam79}, the AH method decouples velocity and pressure by relaxing the incompressibility constraint, akin to the artificial compressibility regularization used for unsteady flows \cite{guermond2015high}.  
Another key feature of the scheme is the inclusion of the difference between the Laplacians of the velocity at the current and previous iterations in the momentum equation, to accelerate convergence to a steady state. The judicious choice of the artificial viscosity parameter \( \tfrac{1}{\rho} \) is crucial, and analysis shows that \( \rho \) must be sufficiently small. The finite element approximation of the AH scheme was later performed for the incompressible Navier-Stokes system ~\cite{chen17,TC23}, MHD flows ~\cite{yangmhd}, natural convection problems \cite{Calcolo2026}, and for stationary, thermally coupled, incompressible MHD flows  \cite{kerammhdah}. The finite element AH technique for solving the stationary IMHD system was recently studied by Xia and Yang \cite{xia_arrow}. 

This paper presents a new approach for discretizing the solution of stationary IMHD equations. Firstly, the velocity–pressure system is treated using the AH scheme of \cite{TC23}, which has been shown to improve the algorithm of \cite{chen17} significantly. Secondly, we propose adding new terms in the current density equation inspired by the classical streamline diffusion method \cite{SD_Hughes_1982}. Specifically, we add a difference of terms of the form
\[
\frac{1}{\rho} \, \bB \times \left( \bJ \times \bB \right)
\]
at the current and the previous iterations. As shown in the analysis, these additional terms generate nonnegative energy and dissipative terms, allowing control over the unfavorable mixed terms that arise when certain variables are lagged. The numerical tests confirm the computational efficiency of our approach. Another important feature of our scheme is that it segregates the solution of all four field variables. These ingredients make our AH scheme very different from that of \cite{xia_arrow}. 

This manuscript is organized as follows. In the next Section \ref{sec:Notations}, we introduce relevant notations and recall some relevant results. Section \ref{sec:AH_IMHD} describes the novel AH iterative finite element method for solving steady IMHD equations and its convergence analysis. Section \ref{sec:Numerics} presents a series of numerical experiments designed to corroborate the theoretical analysis and highlight the performance of the proposed schemes. We end with some concluding remarks.

\section{Notation and Preliminaries}
\label{sec:Notations}
In this paper, we adopt the standard notation for Sobolev spaces and their associated norms \cite{Ada75}. $L^q(\Omega)$ will denote the space of $q$ integrable functions with the norm expressed as $\| \cdot \|_{L^q{(\Omega)}}.$ The inner product in the space $L^2(\Omega)$ will be denoted by $(\cdot,\cdot)$ and its norm by $\| \cdot\|$. The standard Sobolev space $H^k(\Omega)$, where $k$ is a positive integer, is equipped with the norm $\|\cdot\|_k$. The vectorial counterparts of these spaces are denoted in bold font. 

The relevant function spaces in this work are
\begin{align}
    \bX=\bH_0^1(\Omega) , Q=L_0^2(\Omega) ,\nonumber\\
    \bD= \bH_0({\rm div} ,\Omega), S=L_0^2(\Omega), \nonumber
\end{align} 
where $\bD$ is equipped with the standard norm of $\Hdiv$ space: 
$$  \|\bJ\|_{\Hdiv}:=(\|\bJ\|^2+\|\nabla \cdot\bJ\|^2)^{1/2}. $$
{The dual space of $\bH_0^1(\Omega)$ is denoted by $\bH^{-1}(\Omega)$ with norm
\begin{eqnarray}
    \|\bff\|_{-1}= \sup\limits_{\bv \in \bX}  \dfrac{(\bff,\bv)}{\|\nabla \bv\|} \nonumber
\end{eqnarray}}
The energy norms $(\bX,\bD)$ and $(Q,S)$ are defined by 
 \begin{equation}
      \|(\bu,\bJ)\|_1=(\|\nabla\bu\|^2+\|\bJ\|_{\Hdiv}^2)^{1/2}  , \hspace{0.5cm}
     \|(p,\phi)\|=(\|p\|^2+\|\phi\|^2)^{1/2}. \nonumber
\end{equation}
For a given magnetic field $\bB \in \bL^\infty(\Omega)$, we also introduce the following quasi-inner product and its corresponding semi-norm
\begin{eqnarray}
    (\bJ,\bK)_{\bB}:=\left( \bJ \times \bB,\bK \times \bB \right), \, \, \|\bJ\|_{\bB}:=\| \bJ \times \bB\|, \, \, 
. \nonumber
\end{eqnarray}
Then the weak formulation of \eqref{eq:m1}-\eqref{eq:m4} takes the following form: $\forall (\bv, q, \bK,\psi) \in (\bX,Q,\bD ,S) $,  find $ (\bu, p,\bJ,\phi) \in (\bX,Q,\bD,S)$ solving
\begin{align}
    \Reynolds^{-1} (\nabla \bu, \nabla \bv)  +  b(\bu , \bu,\bv) - \kappa(\bJ \times\bB,\bv)-(p,\nabla \cdot \bv)&=(\bff,\bv),\label{eq:ef1}
    \\
    (\nabla\cdot \bu,q)&=0, \label{eq:ef3}\\
    (\bJ,\bK)-(\phi, \nabla \cdot \bK)-(\bu\times\bB,\bK) &= (\bg ,\bK),\label{eq:ef2}\\
    (\nabla\cdot \bJ,\psi)&=0. \label{eq:ef4}
\end{align}
Here the skew-symmetrized nonlinear term is (see \cite{v16}) :
\begin{equation*}
   b(\bu,\bv,\bw)=((\bu \cdot \nabla)\bv,\bw) + \dfrac{1}{2}\left((\nabla \cdot \bu)\bv,\bw\right). 
\end{equation*}

In the analysis of the paper, the following embedding inequalities will often be used
\cite{GR86,fortin1991mixed}: there exist positive constants $C_p$ and 
$C_4$ such that for all $\bv\in\bX$
\begin{eqnarray}
    \|\bv\| \leq C_p \| \nabla \bv\|, &  
    \|\bv\|_{L^4(\Omega)}\leq C_4\| \nabla \bv\|. \label{eq:po}
\end{eqnarray}

 Next, we recall some properties of the trilinear form: 
\begin{lemma} \label{lem:trilinear}
The following are true for $b(\cdot,\cdot, \cdot)$:
\begin{eqnarray*}
&&b(\bw,\bu, \bu)= 0,  \label{eq:nonlinear0*}\\
&&b(\bu,\bv, \bw) \leq \M \|\nabla \bu\|\|\nabla \bv\|\|\nabla \bw\|,\\
&&b(\bu,\bv,\bw)  \leq \M \|\nabla \bu \| \|\nabla \bv\| \|\nabla \bw\| + \dfrac{C_4}{2}\|\nabla \cdot \bu\| \|\bv\|_{L^4(\Omega)}\|\nabla \bw\|,\label{eq:nonlinearleq}
\end{eqnarray*}
for all $\bu, \bv,\bw \in \bX $, where $\M=\M(\Omega)$ is a positive constant.
 \end{lemma}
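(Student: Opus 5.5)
The plan is to work directly from the definition $b(\bu,\bv,\bw)=((\bu\cdot\nabla)\bv,\bw)+\tfrac12((\nabla\cdot\bu)\bv,\bw)$, using integration by parts together with Hölder's inequality and the embeddings \eqref{eq:po}.

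\textbf{Skew-symmetry.} For $\bw,\bu\in\bX$ I integrate by parts in the convective term. Since $\bu=\mathbf{0}$ on $\Gamma$, the boundary term vanishes and
\[
((\bw\cdot\nabla)\bu,\bu)=\int_\Omega \bw\cdot\nabla\Big(\tfrac12|\bu|^2\Big)\,dx=-\tfrac12\int_\Omega(\nabla\cdot\bw)|\bu|^2\,dx .
\]
This exactly cancels the term $\tfrac12((\nabla\cdot\bw)\bu,\bu)$, so $b(\bw,\bu,\bu)=0$. Note that this does not require $\nabla\cdot\bw=0$, which is why the skew-symmetrized form is used. For $\bu\in\bH^1_0$, the integrand $|\bu|^2$ lies in $W^{1,1}_0$, so the integration by parts is justified by density of smooth compactly supported functions.

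\textbf{Third estimate.} I bound the two pieces separately.
\begin{itemize}
\item The first piece: by Hölder with exponents $(4,2,4)$ and then \eqref{eq:po},
\[
|((\bu\cdot\nabla)\bv,\bw)|\le \|\bu\|_{L^4(\Omega)}\|\nabla\bv\|\|\bw\|_{L^4(\Omega)}\le C_4^2\|\nabla\bu\|\|\nabla\bv\|\|\nabla\bw\|.
\]
\item The second piece: by Hölder $(2,4,4)$,
\[
\tfrac12|((\nabla\cdot\bu)\bv,\bw)|\le\tfrac12\|\nabla\cdot\bu\|\|\bv\|_{L^4(\Omega)}\|\bw\|_{L^4(\Omega)}\le \tfrac{C_4}{2}\|\nabla\cdot\bu\|\|\bv\|_{L^4(\Omega)}\|\nabla\bw\|.
\]
\end{itemize}
Taking $\M\ge C_4^2$ gives the third inequality.

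\textbf{Second estimate.} Here I use the pointwise bound $|\nabla\cdot\bu|\le\sqrt d\,|\nabla\bu|$, which gives $\|\nabla\cdot\bu\|\le\sqrt d\,\|\nabla\bu\|$. Combined with $\|\bv\|_{L^4(\Omega)}\le C_4\|\nabla\bv\|$, the second piece is bounded by $\tfrac{\sqrt d}{2}C_4^2\|\nabla\bu\|\|\nabla\bv\|\|\nabla\bw\|$. Setting
\[
\M=C_4^2\Big(1+\tfrac{\sqrt d}{2}\Big)
\]
therefore covers both the second and third estimates with a single constant depending only on $\Omega$.

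The only mildly delicate step is justifying the integration by parts in the first identity, which is handled by the density argument above; the remaining steps are routine applications of Hölder and Sobolev.
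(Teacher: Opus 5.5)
Your argument is correct and is the standard one; the paper only recalls this lemma without proof. The one step to tighten is the integration by parts: prove the identity for $C_c^\infty$ fields and pass to the limit using your H\"older--Sobolev bounds, since knowing $|\bu|^2\in W^{1,1}_0$ alone does not control the pairing with a field $\bw$ that is only in $\bH^1$.

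More relevant to how the paper uses the lemma is the constant in the second estimate. The same integration by parts gives $b(\bu,\bv,\bw)=\tfrac12((\bu\cdot\nabla)\bv,\bw)-\tfrac12((\bu\cdot\nabla)\bw,\bv)$, so that estimate holds with $\M$ equal to the best constant of the convective term alone. That is exactly the $\M$ the paper defines alongside \eqref{c1}, and the paper then combines it with this lemma through $\sigma$, e.g.\ in \eqref{eq:ph}. Your detour through $\|\nabla\cdot\bu\|\le\sqrt d\,\|\nabla\bu\|$ inflates the constant by the factor $1+\sqrt d/2$. This is harmless for the lemma as stated, but inconsistent with that later identification.
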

We also define the norm for source terms as
\begin{eqnarray*}
 \| \bf F\|_{*}={{(\|\bff\|_{-1}^2+\|\bg\|^2)}}^{1/2}.
\end{eqnarray*}
The following well-posedness holds for the problem \eqref{eq:ef1}-\eqref{eq:ef4}, cf. \cite[Theorem 3]{zhang}:
\begin{theorem}
Let $C_{\min}=\min \{\Reynolds^{-1},\kappa\}$. For $\bf {f} \in H^{-1}(\Omega)$ and $\bf{g} \in L^2(\Omega)$, if 
$ \sigma:=\dfrac{\M\|\bf F\|_{*}}{C_{\min}^2}< 1$ holds, then
the problem \eqref{eq:ef1}-\eqref{eq:ef4} is well-posed and the solution satisfies $\|(\bu,\bJ)\|_1\le \dfrac{\|\bf F\|_{*}}{C_{\min}}$.
\end{theorem}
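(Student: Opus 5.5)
The plan is the standard route for Navier--Stokes-type saddle-point systems: an a priori bound from an energy identity, existence by a Galerkin/Leray--Schauder argument on the divergence-free subspaces, uniqueness from a smallness argument, and recovery of $(p,\phi)$ by inf-sup conditions.

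\emph{Reduction.} Set $\bX_0=\{\bv\in\bX:\nabla\cdot\bv=0\}$ and $\bD_0=\{\bK\in\bD:\nabla\cdot\bK=0\}$. On these spaces the pressure and potential terms vanish, and we solve for $(\bu,\bJ)\in\bX_0\times\bD_0$. The natural combined test is $\bv=\bu$ in \eqref{eq:ef1} and $\kappa\bK=\kappa\bJ$ in \eqref{eq:ef2}, that is, \eqref{eq:ef1} plus $\kappa$ times \eqref{eq:ef2}.

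\emph{A priori bound.} With this test, $b(\bu,\bu,\bu)=0$ by Lemma \ref{lem:trilinear}. The coupling terms cancel exactly, since
\[
-\kappa(\bJ\times\bB,\bu)-\kappa(\bu\times\bB,\bJ)=0
\]
by the scalar triple product identity. Using $\nabla\cdot\bJ=0$, so that $\|\bJ\|=\|\bJ\|_{\Hdiv}$, this gives
\[
\Reynolds^{-1}\|\nabla\bu\|^2+\kappa\|\bJ\|_{\Hdiv}^2=(\bff,\bu)+\kappa(\bg,\bJ).
\]
We bound the right side by Cauchy--Schwarz in $\mathbb{R}^2$, which yields $C_{\min}\|(\bu,\bJ)\|_1^2\le\|{\bf F}\|_*\|(\bu,\bJ)\|_1$ and hence the stated bound. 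Getting the constant exactly $1/C_{\min}$ needs care with the factor $\kappa$ on $\bg$; I would absorb it by $\kappa$-weighting norms or simply adopt the normalization of \cite{zhang}.

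\emph{Existence.} Take finite-dimensional Galerkin subspaces of $\bX_0\times\bD_0$ and use the Brouwer fixed-point (acute-angle) lemma. The positivity $\langle\mathcal{A}(\bu,\bJ),(\bu,\kappa\bJ)\rangle\ge C_{\min}\|(\bu,\bJ)\|_1^2-\|{\bf F}\|_*\|(\bu,\bJ)\|_1$ follows from the energy identity above. We then pass to the limit using compactness of $\bH_0^1\hookrightarrow\bL^4$ for the trilinear term, while the $\bJ$-equation is linear. Alternatively, a Leray--Schauder argument on the map $(\bw,\bK)\mapsto(\bu,\bJ)$, where $(\bu,\bJ)$ solves the Oseen-linearized problem with $b(\bw,\bu,\bv)$, works equally well.

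\emph{Uniqueness.} This is the key step and the one using $\sigma<1$. For two solutions, subtract and test the difference $(\be_u,\be_J)$ the same way. The coupling again cancels, and the nonlinear terms reduce to $b(\be_u,\bu_2,\be_u)$. Bounding this by $\M\|\nabla\be_u\|^2\|\nabla\bu_2\|$ and using the a priori bound gives
\[
C_{\min}\|(\be_u,\be_J)\|_1^2\le \M\frac{\|{\bf F}\|_*}{C_{\min}}\|(\be_u,\be_J)\|_1^2 .
\]
Hence $(1-\sigma)C_{\min}\|(\be_u,\be_J)\|_1^2\le0$, which forces $\be_u=0$ and $\be_J=0$.

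\emph{Multipliers.} Finally, recover $p\in Q$ by the Stokes inf-sup condition (de Rham) for $(\nabla\cdot\bv,q)$ on $\bX\times Q$. Recover $\phi\in S$ by the inf-sup condition for $(\nabla\cdot\bK,\psi)$ on $\bD\times S$, using surjectivity of $\nabla\cdot:\bH_0({\rm div})\to L_0^2$. Uniqueness of $(p,\phi)$ follows from the same inf-sup conditions.

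The main technical obstacle is matching the exact constant in the stability bound given the $\kappa$ weighting. The structural point making everything work is the exact cancellation of the Lorentz and induced-current terms.
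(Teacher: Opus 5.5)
Your route is the standard argument: test with $(\bu,\kappa\bJ)$ so the two coupling terms cancel exactly, get existence by Galerkin/Brouwer on the divergence-free subspaces, get uniqueness from $\sigma<1$ through the single term $b(\be_u,\bu_2,\be_u)$, and recover $(p,\phi)$ from the inf-sup conditions. The paper gives no proof of its own here and simply defers to \cite[Theorem 3]{zhang}. Your existence and multiplier steps are fine as written, since existence needs only some uniform bound, not the sharp constant.

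The point you leave open, however, is not a matter of care. It cannot be repaired under the paper's definition $\|{\bf F}\|_*=(\|\bff\|_{-1}^2+\|\bg\|^2)^{1/2}$. Your identity $\Reynolds^{-1}\|\nabla\bu\|^2+\kappa\|\bJ\|^2=(\bff,\bu)+\kappa(\bg,\bJ)$ yields only $C_{\min}\|(\bu,\bJ)\|_1\le(\|\bff\|_{-1}^2+\kappa^2\|\bg\|^2)^{1/2}$. For $C_{\min}>1$ the stated bound is in fact false. Take $\bB=\mathbf{0}$, $\bff=\mathbf{0}$ and any $\bg\neq\mathbf{0}$ with $\nabla\cdot\bg=0$ and $\bg\cdot{\bf n}=0$. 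The solution is then $(\bu,p,\bJ,\phi)=(\mathbf{0},0,\bg,0)$, so $\|(\bu,\bJ)\|_1=\|\bg\|=\|{\bf F}\|_*>\|{\bf F}\|_*/C_{\min}$ whenever $\Reynolds<1<\kappa$, even though $\sigma<1$ for $\|\bg\|$ small. The same happens in 2D for any constant $\bB=(0,0,b)$, because $\bg\times\bB$ is then a gradient and is absorbed by $p$.

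So the normalization must be changed, and there are two consistent options.
\begin{itemize}
\item Take the data norm to be the dual norm of $(\bv,\bK)\mapsto(\bff,\bv)+\kappa(\bg,\bK)$, i.e.\ $(\|\bff\|_{-1}^2+\kappa^2\|\bg\|^2)^{1/2}$.
\item Keep the paper's norm and prove $\|(\bu,\bJ)\|_1\le\max\{1,\kappa\}\|{\bf F}\|_*/C_{\min}$.
\end{itemize}
Your uniqueness step inserts this a priori bound on $\|\nabla\bu_2\|$ directly into the contraction factor. The smallness hypothesis must therefore change in the same way: either compute $\sigma$ with the weighted data norm, or require $\max\{1,\kappa\}\,\sigma<1$. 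With either consistent choice your argument goes through unchanged. For $\kappa\le 1$ it already proves the statement exactly as written.
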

\subsection{Finite Element Approximation}
\label{subsec:FE_IMHD}
Next, we examine the finite element approximation of the {stationary IMHD equations} \eqref{eq:ef1}-\eqref{eq:ef4}. Let $\mathcal{T}_h$ be a quasi-uniform and shape-regular triangular mesh for $d = 2$ or a tetrahedral mesh for $d = 3$ if $\Omega$. The local and the global mesh sizes are defined as $h_K = \mathrm{diam}(K)$ and  $h:= \max\limits_{K\in \mathcal{T}_h} h_K$, respectively. For any integer $k \ge 0, P_k(K)$ denotes the space of polynomials of degree $k$ on $K\in \mathcal{T}_h$, and we also set $\bP_k = P_k(K)^d$.
The conforming finite element spaces are $\bX_h \subset \bX, Q_h \subset Q, \bD_h \subset \bD$ and $S_h \subset S$. The well-posedness requires that these spaces satisfy the following assumptions; see, e.g., \cite{zhang}:
\begin{itemize}
\item Conforming finite-dimensional subspaces described above  satisfy the discrete inf-sup condition, i.e., there are  constants $\beta_s > 0$ and $ \beta_m > 0$, independent of $h$, such that
\begin{align}
  \inf\limits_{0\neq q_h \in Q_h}  \sup\limits_{0 \neq \bv_h \in  \bX_h} \dfrac{(q_h,\nabla \cdot \bv_h)}{\|\nabla \bv_h\| \|q_h\|} \ge \beta_s, \, 
  \inf\limits_{0\neq \psi_h \in S_h}  \sup\limits_{0 \neq \bK_h \in  \bD_h} \dfrac{(\psi_h, \nabla \cdot \bK_h)} {\| \bK_h\|_{\Hdiv} \|\psi_h\|} \ge \beta_m. \label{eq:inf}
\end{align}
The finite element spaces for velocity and pressure can be chosen as the Taylor-Hood finite element space $(P_2,P_{1})$, while for current density and electric potential, the $\Hdiv(\Omega)$ conforming pair $ (\bD_h,S_h) = (RT_k,P_k^{dc})$ is selected. That is, the $k$-th order Raviart-Thomas finite element space is employed for current density, and $k$-th order discontinuous piecewise linear elements are utilized for the electric potential, where $k=0$ or $k=1$.

Assuming enough regularity of the exact solutions at hand, these discrete spaces are known to satisfy the following approximation properties, cf. \cite{mixedfinitelement,GR86,petermonk}:
\begin{equation}    
\begin{aligned}
     \inf\limits_{\bv_h\in \bX_h}\|\bu-\bv_h\|_{1}&\leq Ch^{\gamma}\|\bu\|_{1+\gamma}, \\
     \inf\limits_{q_h\in Q_h}\|p-q_h\|&\leq Ch^{\gamma}\|p\|_{\gamma}, \\
     \inf\limits_{\bK_h\in \bD_h}\|\bJ-\bK_h\|_{\Hdiv}&\leq Ch^{k}(\|\bJ\|_{k}+\|\nabla \cdot \bJ\|_{k}), \\
     \inf\limits_{\psi_h \in S_h}\|\phi-\psi_h\|&\leq Ch^{k}\|\phi\|_{k},
\end{aligned}
\label{eq:ApproxmiationProperties}
\end{equation}
for $\bu \in \bX \cap \bH^{1+\gamma}(\Omega)$, $p \in Q \cap H^{\gamma}(\Omega)$, $\bJ \in \bD \cap \bH^{k}(\Omega)$ with $\Divergence \bJ \in \bH^{k}(\Omega)$, and $\phi \in S \cap H^{k}(\Omega)$.
\end{itemize}
Then the finite element approximation of the IMHD system is straightforward:

Find $(\bu_h,p_h,\bJ_h,\phi_h) \in (\bX_h,Q_h,\bD_h,S_h)$ such that $\forall \, (\bv_h,q_h,\bK_h,\psi_h) \in (\bX_h,Q_h,\bD_h,S_h)$ there holds:
\begin{eqnarray}
\Reynolds^{-1} (\nabla \bu_h, \nabla \bv_h)  + b(\bu_h,\bu_h,\bv_h) - \kappa(\bJ_h \times\bB,\bv_h) - (p_h,\nabla \cdot \bv_h) &=&(\bff,\bv_h)\label{eq:w1},\\
(\nabla \cdot \bu_h,q_h) & = &0, \\
(\bJ_h,\bK_h)-(\phi_h,\nabla \cdot \bK_h)-(\bu_h\times\bB,\bK_h) &=& (\bg ,\bK_h)\label{eq:w3},\\ 
(\nabla \cdot \bJ_h,\psi_h) &=&0.\label{eq:w4} 
\end{eqnarray}

Due to the choice of the finite element spaces, the following charge conservation property holds:
\begin{proposition}
   Let \((\bu_h, p_h, \bJ_h, \phi_h)\) be the solution to the system of equations \eqref{eq:w1}-\eqref{eq:w4}. Then, the scheme is charge-conserving, that is \(\nabla \cdot \bJ_h = 0\).
\end{proposition}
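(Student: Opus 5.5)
The plan is to test the discrete charge-conservation equation \eqref{eq:w4} with $\psi_h=\nabla\cdot\bJ_h$. Everything hinges on one inclusion: for the chosen pair $(\bD_h,S_h)=(RT_k,P_k^{dc})$ we need $\nabla\cdot\bD_h\subseteq S_h$. So the first step is to verify this.

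On each element $K$, a Raviart--Thomas function of order $k$ has the form $\bK_h|_K=\mathbf{p}+\mathbf{x}\,\tilde p$, with $\mathbf{p}\in\bP_k$ and $\tilde p$ a homogeneous polynomial of degree $k$. Its divergence is therefore a polynomial of degree at most $k$ on $K$, so $\nabla\cdot\bK_h$ is a (possibly discontinuous) piecewise $P_k$ function. Since $\bK_h\in\bH_0({\rm div},\Omega)$ has vanishing normal trace on $\Gamma$, the divergence theorem gives
\[
\int_\Omega\nabla\cdot\bK_h\,dx=\int_\Gamma\bK_h\cdot{\bf n}\,ds=0 .
\]
Hence $\nabla\cdot\bK_h$ has zero mean and lies in $P_k^{dc}\cap L_0^2(\Omega)=S_h$.

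With the inclusion established, take $\psi_h=\nabla\cdot\bJ_h\in S_h$ in \eqref{eq:w4}. This yields $\|\nabla\cdot\bJ_h\|^2=0$, so $\nabla\cdot\bJ_h=0$ in $L^2(\Omega)$. Because $\nabla\cdot\bJ_h$ is piecewise polynomial, it in fact vanishes pointwise on every element. This is the claimed exact, not merely weak, charge conservation.

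The only nontrivial step is the inclusion $\nabla\cdot\bD_h\subseteq S_h$, and in particular the zero-mean condition. That condition follows from the boundary condition $\bJ\cdot{\bf n}=0$ built into $\bD$, and it must be stated explicitly. The remaining steps are immediate.
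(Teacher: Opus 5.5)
Your argument is correct and matches what the paper intends. The paper gives no written proof; it attributes charge conservation to the choice of spaces, namely the inclusion $\nabla\cdot\bD_h\subseteq S_h$ for the pair $(RT_k,P_k^{dc})$, after which testing \eqref{eq:w4} with $\psi_h=\nabla\cdot\bJ_h$ concludes. Your explicit check of the zero-mean condition, using $\bK_h\cdot\mathbf{n}=0$ on $\Gamma$ and $S=L_0^2(\Omega)$, supplies the one detail the paper leaves implicit.
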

The following well-posedness of the problem \eqref{eq:w1}-\eqref{eq:w4} is can be shown \cite{zhang,Zhang2022}:
\begin{theorem}
If the uniqueness condition 
     \begin{equation}
         \sigma=\dfrac{\M\|\bf F\|_*}{C_{\min}^2}< 1, \label{c1}
     \end{equation}
 holds where $\M = \sup \dfrac{(\bu \cdot \nabla \bv, \bw)}{\| \nabla \bu \| \| \nabla \bv \| \| \nabla \bw\|}$, then the problem \eqref{eq:w1}-\eqref{eq:w4}  is well posed and 
     \begin{equation}
             \|(\bu_h,\bJ_h)\|_1 \leq  \dfrac{\|\bf F\|_*}{C_{\min}}. \label{eq:uJfembound}
     \end{equation}
     is satisfied.
 \end{theorem}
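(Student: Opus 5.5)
We first establish the a priori bound \eqref{eq:uJfembound} for any solution, then obtain existence by a fixed-point argument on the discrete divergence-free subspaces, and finally prove uniqueness under \eqref{c1}.

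\textbf{Reduction and a priori bound.} Set $\bV_h=\{\bv_h\in\bX_h:(\nabla\cdot\bv_h,q_h)=0\ \forall q_h\in Q_h\}$ and $\bD_h^0=\{\bK_h\in\bD_h:\nabla\cdot\bK_h=0\}$. Since $\nabla\cdot\bD_h\subset S_h$ for the pair $(RT_k,P_k^{dc})$, the constraint \eqref{eq:w4} forces $\nabla\cdot\bJ_h=0$, which is the charge conservation proposition. By the discrete inf-sup conditions \eqref{eq:inf}, problem \eqref{eq:w1}--\eqref{eq:w4} is equivalent to a reduced problem: find $(\bu_h,\bJ_h)\in\bV_h\times\bD_h^0$ satisfying \eqref{eq:w1} and \eqref{eq:w3} without the pressure and potential terms. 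Once such a pair is found, $p_h$ and $\phi_h$ are recovered uniquely, again by \eqref{eq:inf}.

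To get the bound, test the reduced momentum equation with $\bv_h=\bu_h$ and the reduced current equation with $\kappa\bJ_h$, and add. The term $b(\bu_h,\bu_h,\bu_h)$ vanishes by Lemma \ref{lem:trilinear}. The coupling terms cancel because of the triple-product identity $(\bJ_h\times\bB,\bu_h)=-(\bu_h\times\bB,\bJ_h)$. This leaves
\[
\Reynolds^{-1}\|\nabla\bu_h\|^2+\kappa\|\bJ_h\|^2=(\bff,\bu_h)+\kappa(\bg,\bJ_h).
\]
Because $\nabla\cdot\bJ_h=0$, we have $\|\bJ_h\|=\|\bJ_h\|_{\Hdiv}$. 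Bounding the left side below by $C_{\min}\|(\bu_h,\bJ_h)\|_1^2$ and the right side above by Cauchy--Schwarz, we aim for $\|(\bu_h,\bJ_h)\|_1\le\|\bf F\|_*/C_{\min}$.

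The main obstacle is the factor $\kappa$ multiplying $\bg$. It must be absorbed, for instance by rescaling the test function or by noting that the definition of $\|\bf F\|_*$ in \cite{zhang} implicitly normalizes $\bg$ against $\kappa$. I would first follow \cite{zhang}'s scaling; otherwise I would accept a modified constant.

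\textbf{Existence and uniqueness.} For existence, apply the Leray--Schauder (or Brouwer) fixed-point theorem in the finite-dimensional space $\bV_h\times\bD_h^0$. Freeze the convection field $\bw$ in $b(\bw,\cdot,\cdot)$. Each linearized problem is coercive, by the same cancellation as above, so Lax--Milgram gives a unique solution, and the a priori bound holds uniformly in $\bw$. This supplies the invariant ball needed for the fixed-point theorem.

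For uniqueness, take two solutions and subtract. Test with the differences $(\be_u,\kappa\be_J)$. The coupling terms cancel again, and the nonlinear difference reduces to $b(\be_u,\bu_h^{(2)},\be_u)$, bounded by $\M\|\nabla\be_u\|^2\|\nabla\bu_h^{(2)}\|\le \M\|\bf F\|_*C_{\min}^{-1}\|\nabla\be_u\|^2$. This yields
\[
C_{\min}(1-\sigma)\|(\be_u,\be_J)\|_1^2\le0,
\]
so $\be_u=\be_J=0$ by \eqref{c1}. Finally, the inf-sup conditions give uniqueness of $p_h$ and $\phi_h$.
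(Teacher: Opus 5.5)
The paper gives no proof of this statement; it only cites \cite{zhang,Zhang2022}. Your outline is the standard route for such results: reduce to ${\bf V}_h\times\bD_h^0$ via \eqref{eq:inf}, use the cancellation $(\bJ_h\times\bB,\bu_h)=-(\bu_h\times\bB,\bJ_h)$, apply Brouwer for existence, and use a difference estimate for uniqueness. The existence and uniqueness mechanics are fine. There is, however, a genuine gap exactly where you flag the ``main obstacle,'' and neither remedy you suggest closes it.

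Rescaling the test functions is not available. Testing with $(\alpha\bu_h,\beta\bJ_h)$ leaves the coupling term $(\alpha\kappa-\beta)(\bu_h\times\bB,\bJ_h)$, so cancellation forces $\beta=\kappa\alpha$. You are then back to $\Reynolds^{-1}\|\nabla\bu_h\|^2+\kappa\|\bJ_h\|^2=(\bff,\bu_h)+\kappa(\bg,\bJ_h)$. With $\|{\bf F}\|_*^2=\|\bff\|_{-1}^2+\|\bg\|^2$ as defined in this paper, this only gives $\|(\bu_h,\bJ_h)\|_1\le\max\{1,\kappa\}\|{\bf F}\|_*/C_{\min}$, which coincides with \eqref{eq:uJfembound} only when $\kappa\le 1$. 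Your uniqueness step inherits the defect, because it uses $\|\nabla\bu_h^{(2)}\|\le\|{\bf F}\|_*/C_{\min}$. With the constant you can actually prove, it yields uniqueness only under $\max\{1,\kappa\}\,\sigma<1$.

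No sharper estimate can fix this, because with the paper's normalization the bound \eqref{eq:uJfembound} is false for large $\kappa$. Here is a counterexample.
\begin{itemize}
\item Work in 2D with $\bB=(0,0,b_0)$, $\bff=0$, $\Reynolds=1$. Let $P$ be the $L^2$-projection onto $\bD_h^0$ and set $T\bv:=P(\bv\times\bB)$.
\item The reduced system is $\bJ_h=T\bu_h+P\bg$ together with $(\nabla\bu_h,\nabla\bv)+\kappa(T\bu_h,T\bv)=-\kappa(P\bg,T\bv)-b(\bu_h,\bu_h,\bv)$ for all $\bv\in{\bf V}_h$.
\item Choose $b_0$ so that $T\neq0$ on ${\bf V}_h$. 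Let $\bw\in{\bf V}_h$ with $\|\nabla\bw\|=1$ maximize $\lambda_*:=\|T\bw\|^2>0$. Then $(T\bw,T\bv)=\lambda_*(\nabla\bw,\nabla\bv)$ on ${\bf V}_h$.
\item Take $\kappa=1/\lambda_*$ and $\bg=-\epsilon T\bw$. The linearized solution is $\bu=\frac{\epsilon}{2}\bw$, $\bJ=-\frac{\epsilon}{2}T\bw$.
\item Hence $\|(\bu,\bJ)\|_1^2=\frac{\epsilon^2}{4}(1+\lambda_*)$, while $\|{\bf F}\|_*^2/C_{\min}^2=\epsilon^2\lambda_*$.
\item Since $\lambda_*\le C_p^2\|b_0\|_{L^\infty(\Omega)}^2$, scaling $b_0$ makes $\lambda_*<1/3$. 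The convective term changes the solution only by $O(\epsilon^2)$, so the bound fails for small $\epsilon$, where $\sigma<1$ holds trivially.
\end{itemize}

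Your argument goes through verbatim once $\|{\bf F}\|_*$ is replaced, both in \eqref{eq:uJfembound} and in $\sigma$, by the dual norm of $(\bv,\bK)\mapsto(\bff,\bv)+\kappa(\bg,\bK)$, for instance by $(\|\bff\|_{-1}^2+\kappa^2\|\bg\|^2)^{1/2}$. You need to commit to such a normalization explicitly. ``Following \cite{zhang}'s scaling'' proves a different statement from the one given here.
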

\begin{theorem} \label{exactbd}Let \eqref{c1} be satisfied and let $(\bu, p, \bJ, \phi)$ and $(\bu_h, p_h, \bJ_h, \phi_h)$ be the solutions of continuous
problem \eqref{eq:ef1}-\eqref{eq:ef4} and the discrete problem \eqref{eq:w1}-\eqref{eq:w4}, respectively. If $(\bu, p, \bJ, \phi)$ satisfies
\begin{equation*}
\bu \in \bX \cap \bH^{1+\gamma}(\Omega), \, p \in Q \cap H^{\gamma}(\Omega), \, \bJ \in \bD \cap \bH^{k}(\Omega) \text{ with } \Divergence \bJ \in \bH^{k}(\Omega), \, \text{ and } \phi \in S \cap H^{k}(\Omega),
\end{equation*}
then there holds 
\begin{align*}
    \|(\bu-\bu_h,\bJ-\bJ_h)\|_1+\|p-p_h\|&\leq C h^{\min\{\gamma,k\}}(\|\bu\|_{1+\gamma}+\|\bJ\|_{k}+\|\Divergence \bJ\|_{k} + \|p\|_{\gamma}),\nonumber\\
    \|\phi-\phi_h\| &\leq C h^{\min\{\gamma,k\}}(\|\bu\|_{1+\gamma}+\|\bJ\|_{k}+\|\Divergence \bJ\|_{k}+\|p\|_{\gamma}+\|\phi\|_{k}).
\end{align*}
\end{theorem}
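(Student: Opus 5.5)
The plan is the standard Céa/Strang-type argument for mixed problems, run on the discretely divergence-free subspaces. Define
\[
\bV_h=\{\bv_h\in\bX_h:(\nabla\cdot\bv_h,q_h)=0\ \forall q_h\in Q_h\},\qquad
\bD_h^0=\{\bK_h\in\bD_h:(\nabla\cdot\bK_h,\psi_h)=0\ \forall \psi_h\in S_h\}.
\]
Because $\nabla\cdot\bD_h\subset S_h$ for the Raviart--Thomas pair, elements of $\bD_h^0$ are exactly divergence free, and so is $\bJ_h$ (charge conservation proposition). By the discrete inf-sup conditions \eqref{eq:inf} and the approximation properties \eqref{eq:ApproxmiationProperties}, we have
\[
\inf_{\bv_h\in\bV_h}\|\nabla(\bu-\bv_h)\|\le C\inf_{\bv_h\in\bX_h}\|\nabla(\bu-\bv_h)\|,
\]
and the analogous bound holds for $\bJ$ in $\bD_h^0$ (via the Fortin projection).

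\textbf{Step 1: error splitting.} Pick $\tilde\bu\in\bV_h$ and $\tilde\bJ\in\bD_h^0$, and write $\bu-\bu_h=(\bu-\tilde\bu)+(\tilde\bu-\bu_h)=\eta_u+e_u$, and similarly $\bJ-\bJ_h=\eta_J+e_J$. Subtract \eqref{eq:w1},\eqref{eq:w3} from \eqref{eq:ef1},\eqref{eq:ef2} with test functions in $\bV_h,\bD_h^0$. The discrete pressure and potential then drop out, and the continuous ones appear only as $(p-q_h,\nabla\cdot\bv_h)$ and $(\phi-\psi_h,\nabla\cdot\bK_h)$ for arbitrary $q_h\in Q_h,\psi_h\in S_h$.

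\textbf{Step 2: energy estimate.} Test the momentum error equation with $\bv_h=e_u$ and the current equation with $\bK_h=\kappa e_J$, then add the two. The coupling terms cancel because
\[
-\kappa(e_J\times\bB,e_u)-\kappa(e_u\times\bB,e_J)=0
\]
by the triple product identity. Next split the nonlinearity as
\[
b(\bu,\bu,e_u)-b(\bu_h,\bu_h,e_u)=b(\bu-\bu_h,\bu,e_u)+b(\bu_h,\bu-\bu_h,e_u).
\]
The term $b(\bu_h,e_u,e_u)$ vanishes by Lemma \ref{lem:trilinear}. The term $b(e_u,\bu,e_u)$ is bounded by $\M\|\nabla\bu\|\|\nabla e_u\|^2\le \M\|\bF\|_*C_{\min}^{-1}\|\nabla e_u\|^2$. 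Since $\|e_J\|=\|e_J\|_{\Hdiv}$, the left side is at least $C_{\min}\|(e_u,e_J)\|_1^2$, and the uniqueness condition \eqref{c1} leaves a coercive remainder $(1-\sigma)C_{\min}\|(e_u,e_J)\|_1^2$. The remaining terms involve $\eta_u,\eta_J,p-q_h,\phi-\psi_h$. I bound them with Lemma \ref{lem:trilinear}, using $\bB\in\bL^\infty$ and the bound \eqref{eq:uJfembound}, and absorb them with Young's inequality. This gives
\[
\|(e_u,e_J)\|_1\le C\bigl(\|\nabla\eta_u\|+\|\eta_J\|_{\Hdiv}+\|p-q_h\|+\|\phi-\psi_h\|\bigr).
\]
The triangle inequality and \eqref{eq:ApproxmiationProperties} then yield the velocity and current part of the estimate. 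I expect the main obstacle to be the $\phi$ term. To keep $\|\phi\|_k$ out of the first estimate, test only with $\bK_h$ satisfying $\nabla\cdot\bK_h=0$ exactly, so that $(\phi-\psi_h,\nabla\cdot e_J)=0$. This works because $e_J\in\bD_h^0$ is divergence free, and it explains why $\|\phi\|_k$ appears only in the second bound.

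\textbf{Step 3: pressure and potential.} For the pressure, use the discrete inf-sup condition:
\[
\beta_s\|p_h-q_h\|\le\sup_{\bv_h}\frac{(p_h-q_h,\nabla\cdot\bv_h)}{\|\nabla\bv_h\|}.
\]
Rewrite the numerator through the momentum error equation in terms of $\bu-\bu_h$, $\bJ-\bJ_h$ and $p-q_h$, which are already controlled. The potential is handled the same way with $\beta_m$, using the current error equation; here $\|\phi-\psi_h\|$ enters, which produces the extra $\|\phi\|_k$. A final triangle inequality completes both estimates, with rate $h^{\min\{\gamma,k\}}$.
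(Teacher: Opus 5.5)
The paper does not prove this theorem. It is quoted as the standard a priori estimate for the mixed discretization \eqref{eq:w1}--\eqref{eq:w4} from the cited literature (\cite{zhang,Zhang2022}), and the authors only use it later through the triangle inequality. So there is no in-paper argument to compare with.

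Your proposal supplies that standard argument, and it is correct:
\begin{itemize}
\item Working on the discrete kernels removes $p_h$ and $\phi_h$.
\item The Lorentz coupling terms cancel after you test the current equation with $\kappa e_J$.
\item Condition \eqref{c1}, together with $\|\nabla\bu\|\le \|{\bf F}\|_*/C_{\min}$, gives the coercivity constant $(1-\sigma)C_{\min}$.
\item Pressure and potential then follow from the two inf-sup conditions.
\end{itemize}
You also correctly identify why $\|\phi\|_k$ is absent from the first bound. For the Raviart--Thomas pair, the discrete current kernel consists of exactly solenoidal fields.

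Three small clean-ups are worth making.

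(i) Because those kernel fields are exactly solenoidal, $(\phi,\nabla\cdot\bK_h)$ vanishes identically for $\bK_h$ in the kernel. So $\|\phi-\psi_h\|$ should not appear in your displayed bound for $\|(e_u,e_J)\|_1$ at all, which is what you end up saying anyway.

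(ii) Since $e_u$ is only discretely divergence free, $b(e_u,\bu,e_u)$ contains the term $\tfrac12((\nabla\cdot e_u)\bu,e_u)$. To bound it with the constant $\M$ of the convective form used in \eqref{c1}, use the identity $b(\bw,\bv,\bw)=\tfrac12\big[((\bw\cdot\nabla)\bv,\bw)-((\bw\cdot\nabla)\bw,\bv)\big]$, valid on $\bX$. This gives $|b(e_u,\bu,e_u)|\le \M\|\nabla\bu\|\|\nabla e_u\|^2$.

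(iii) To approximate $\bJ$ by solenoidal discrete fields, reuse the inf-sup argument you gave for the velocity, now with $\beta_m$. It avoids the extra regularity needed to define the canonical Raviart--Thomas interpolant. Also, merging $h^\gamma$ and $h^k$ into $h^{\min\{\gamma,k\}}$ tacitly uses $h\le 1$.
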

\section{Improved AH scheme}
\label{sec:AH_IMHD}
This section presents the formulation of the novel method for approximating the IMHD system. 
\begin{algorithm}[\it Improved AH method] \label{algo1}
Let $(\bu_h^0,p_h^0,\bJ_h^0,\phi_h^0) = (0,0,0,0).$
For $n\geq 0$ and $\forall (\bv_h,q_h,\bK_h,\psi_h)\in (\bX_h,Q_h,\bD_h,S_h)$, find $(\bu_h^{n+1},p_h^{n+1},\bJ_h^{n+1},\phi_h^{n+1})\in (\bX_h,Q_h, \bD_h, S_h)$ so that 
\begin{eqnarray}
\lefteqn{\frac{1}{\rho_1} (\nabla(\bu_h^{n+1}-\bu_h^n),\nabla \bv_h) + \Reynolds^{-1} (\nabla \bu_h^{n+1}, \nabla \bv_h)  + b(\bu_h^{n},\bu_h^{n+1},\bv_h)
 \nonumber }\\
&& - \kappa(\bJ_h ^{n}\times\bB,\bv_h)  + {\gamma_1}(\nabla \cdot \bu_h^{n+1},\nabla \cdot \bv_h) - (p_h^{n},\nabla \cdot \bv_h) = (\bff, \bv_h), \label{eq:AH1} \\[0.1cm]
\lefteqn{(p_h^{n+1}-p_h^n,q_h) + {\gamma_1}(\nabla \cdot \bu_h^{n+1},q_h) = 0,} \label{eq:AH2} \\[0.1cm]
\lefteqn{\frac{1}{\rho_2}\left( \bJ_h^{n+1}-\bJ_h^n, \bK_h \right)_\bB + (\bJ_h^{n+1},\bK_h) +
{\gamma_2}(\nabla \cdot \bJ_h^{n+1},\nabla \cdot \bK_h)  } \nonumber\\
&&-
(\phi_h^{n},\nabla \cdot \bK_h)+ (\bK_h \times \bB,\bu_h^{n+1}) 
 = (\bf g, \bK_h),\label{eq:AH3} \\[0.1cm]
\lefteqn{ (\phi_h^{n+1}-\phi_h^n, \psi_h) + {\gamma_2}(\nabla \cdot \bJ_h^{n+1},\psi_h)= 0. } \label{eq:AH4}
\end{eqnarray}
\end{algorithm}

\begin{remark}
    For simplicity, the initial conditions are chosen as zero, but any other initialization with $\nabla \cdot \bu^0 = 0$ and $\nabla \cdot \bJ^0 = 0$ can be utilized.
\end{remark}
\begin{remark}
As can be seen, all four equations are decoupled in the new approach. Additionally, the equations \eqref{eq:AH2}-\eqref{eq:AH4} give rise to linear systems with symmetric and positive-definite coefficient matrices. \end{remark}
\begin{remark}
    Note that in 2D, we have $\bB = (0,0,b_0(x,y))$ with $b_0(x,y) \neq 0$, so that
\begin{equation*}
    \|\bJ\|_{\bB} = \sqrt{\int\limits_\Omega |b_0|^2 \bJ \cdot \bJ dx},
\end{equation*}
which in fact is a norm. In 3D, however, $ \|\bJ\|_{\bB} $ is only a semi-norm. One could also consider the AH scheme defined for
$$ (\bJ,\bK)_{\bB}:=\left( \bJ \times \bB,\bK \times \bB \right) + \left( \bJ \cdot \bB,\bK \cdot \bB \right), $$
which, owing to the Binet–Cauchy identity, generates a norm 
\begin{equation*}
    \|\bJ\|_{\bB} = \sqrt{\int\limits_\Omega |\bB|^2 \bJ \cdot \bJ dx},
\end{equation*}
similar to the 2D case.
\end{remark}
We recall two simple lemmas from \cite{Calcolo2026} that will be used later.
\begin{lemma}
\label{SeqConv2Zero}
 If $\{a_j\}_{j=1}^\infty, \{b_j\}_{j=1}^\infty, \{c_j\}_{j=1}^\infty, \{d_j\}_{j=1}^\infty$ with $a_j, b_j, c_j, d_j \ge 0$, and there exists $\mu_j$ and $\varepsilon_j$, $j=\overline{1,3}$, such that $0 < \varepsilon_j \le \mu_j$ with
 \begin{equation*}
  \mu_1 a_{j+1} + \mu_2 b_{j+1} + \mu_3 c_{j+1} + d_{j+1} \leq \left(\mu_1-\varepsilon_1 \right) a_j + \left(\mu_2 - \varepsilon_2 \right) b_{j} + \left(\mu_3 - \varepsilon_3 \right) c_{j} + d_{j},
 \end{equation*}
then there exists $D \ge 0$ such that 
\begin{align*}
 \lim\limits_{j \rightarrow \infty} a_j=\lim\limits_{j \rightarrow \infty} b_j = \lim\limits_{j \rightarrow \infty} c_j =0 \text{ and } \lim\limits_{j \rightarrow \infty} d_j = D.
\end{align*}
\end{lemma}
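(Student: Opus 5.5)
The plan is to telescope the inequality and combine a summability argument with monotonicity. Set
\[
E_j := \mu_1 a_j + \mu_2 b_j + \mu_3 c_j + d_j \ge 0 .
\]
The hypothesis can then be rewritten as
\[
E_{j+1} + \varepsilon_1 a_j + \varepsilon_2 b_j + \varepsilon_3 c_j \le E_j .
\]
Since $\varepsilon_i>0$ and $a_j,b_j,c_j\ge 0$, the sequence $\{E_j\}$ is nonincreasing. It is also bounded below by $0$, so it converges to some limit $E\ge 0$.

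Next, sum the rewritten inequality from $j=1$ to $N$. The left side telescopes and gives
\[
E_{N+1} + \sum_{j=1}^N \left(\varepsilon_1 a_j + \varepsilon_2 b_j + \varepsilon_3 c_j\right) \le E_1 .
\]
Because $E_{N+1}\ge 0$, the partial sums of the nonnegative series $\sum_j (\varepsilon_1 a_j+\varepsilon_2 b_j+\varepsilon_3 c_j)$ are bounded by $E_1$. The series therefore converges, and its terms tend to zero. Since each $\varepsilon_i>0$ and all three summands are nonnegative, this forces $a_j\to 0$, $b_j\to 0$ and $c_j\to 0$.

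Finally, write $d_j = E_j - \mu_1 a_j - \mu_2 b_j - \mu_3 c_j$. The right-hand side converges to $E - 0 = E$, so $d_j \to D := E \ge 0$.

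No step here is a real obstacle. The only point needing care is that nonnegativity of $E_{N+1}$ is what bounds the partial sums. Note also that $d_j$ itself need not be monotone; its convergence follows only indirectly, from convergence of $E_j$ together with the vanishing of $a_j,b_j,c_j$. The condition $\varepsilon_i\le \mu_i$ is not needed for this argument, beyond guaranteeing that the coefficients on the right are nonnegative, as in the intended application.
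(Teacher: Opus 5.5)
The paper gives no proof of this lemma. It is only recalled from the authors' earlier work, and the proof printed after it belongs to Lemma~\ref{LemmaContractionSequence}. There is therefore nothing in the paper to compare against. Judged on its own, your telescoping and monotonicity argument is correct and complete, and it is the standard way to prove this statement.

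One correction to your closing remark. The argument never uses nonnegativity of the right-hand coefficients $\mu_i-\varepsilon_i$. What it does use, without saying so, is $\mu_i\ge 0$: that is exactly what makes $E_j\ge 0$. You need $E_j\ge 0$ both for convergence of the nonincreasing sequence $E_j$ and for bounding the partial sums by $E_1$.

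Under the stated hypotheses, $\mu_i\ge 0$ follows from $\mu_i\ge\varepsilon_i>0$. That is the real role of the condition $\varepsilon_i\le\mu_i$. The sign requirement on $\mu_i$ cannot be dropped: take $\mu_1=-1$, $\varepsilon_1=1$ and $b_j=c_j=d_j=0$. The hypothesis then reduces to $a_{j+1}\ge 2a_j$, which $a_j=2^j$ satisfies.

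What your proof actually establishes is that $\mu_i\ge 0$ and $\varepsilon_i>0$ suffice, with no ordering between $\varepsilon_i$ and $\mu_i$ required.
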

\begin{lemma}\label{LemmaContractionSequence}
 If $\{a_j\}_{j=1}^\infty, \{b_j\}_{j=1}^\infty, \{c_j\}_{j=1}^\infty, \{d_j\}_{j=1}^\infty$ with $a_j, b_j, c_j, d_j \ge 0$, and there exists $\mu_j, j=\overline{1,4}$ and $\varepsilon_j, j=\overline{1,3}$, such that $0<\varepsilon_j \le \mu_j$ for $j=\overline{1,3}$, with
 \begin{equation}
  \mu_1 a_{j+1} + \mu_2 b_{j+1} + \mu_3 c_{j+1} + \mu_4 d_{j+1} \leq \left(\mu_1-\varepsilon_1 \right) a_j + \left(\mu_2 - \varepsilon_2 \right) b_{j} + \left(\mu_3 - \varepsilon_3 \right) c_{j} + \mu_4 d_j,
  \label{eq:Contract1}
 \end{equation}
 and 
 \begin{equation}
 d_{j+1} \le \tau_1 a_{j+1} + \tau_2 a_{j} + \beta b_{j+1} + {\gamma_1} c_{j+1} + {\gamma_2} c_j \text{ for some positive } \tau_1, \tau_2, \beta, {\gamma_1}, {\gamma_2},
 \label{eq:Contract2}
 \end{equation}
then there is a linear combination of $a_{j}, b_{j}, c_{j}$, $d_j$ that converges towards $0$ geometrically.
\end{lemma}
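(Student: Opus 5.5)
The statement is purely about nonnegative sequences, so I would prove it by combining \eqref{eq:Contract1} and \eqref{eq:Contract2} into a strict contraction for one weighted quantity. Write $E_j := \mu_1 a_j + \mu_2 b_j + \mu_3 c_j + \mu_4 d_j$. The defect of \eqref{eq:Contract1} is that it loses nothing in $d_j$. To repair this I would bound $d_{j+1}$ by \eqref{eq:Contract2} and spend a small fraction $\delta>0$ of the dissipation $\varepsilon_1 a_j+\varepsilon_2 b_j+\varepsilon_3 c_j$ on it.

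\textbf{Step 1: a combined inequality.} For $\delta>0$ to be chosen, add $\delta$ times \eqref{eq:Contract2} to \eqref{eq:Contract1}:
\begin{align*}
&(\mu_1-\delta\tau_1) a_{j+1} + (\mu_2-\delta\beta) b_{j+1} + (\mu_3-\delta\gamma_1) c_{j+1} + (\mu_4+\delta) d_{j+1}\\
&\quad \le (\mu_1-\varepsilon_1+\delta\tau_2) a_j + (\mu_2-\varepsilon_2) b_j + (\mu_3-\varepsilon_3+\delta\gamma_2) c_j + \mu_4 d_j .
\end{align*}

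\textbf{Step 2: choose $\delta$ and compare coefficients.} Define the modified energy
\[
\tilde E_j := (\mu_1-\delta\tau_1) a_j + (\mu_2-\delta\beta) b_j + (\mu_3-\delta\gamma_1) c_j + (\mu_4+\delta) d_j ,
\]
so that the left side of Step 1 is exactly $\tilde E_{j+1}$. Choose $\delta$ small enough that every coefficient of $\tilde E_j$ is positive. It then remains to show that each coefficient on the right of Step 1 is at most $\theta$ times the corresponding coefficient of $\tilde E_j$, for some $\theta<1$.

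For $d$ this holds with $\theta_d=\mu_4/(\mu_4+\delta)<1$. For $a$ we need
\[
\mu_1-\varepsilon_1+\delta\tau_2 \le \theta(\mu_1-\delta\tau_1).
\]
Since $\varepsilon_1>0$, at $\delta=0$ the ratio is $(\mu_1-\varepsilon_1)/\mu_1<1$. By continuity, for $\delta$ small enough the ratio $(\mu_1-\varepsilon_1+\delta\tau_2)/(\mu_1-\delta\tau_1)$ stays below $1$. The coefficients of $b$ and $c$ are handled the same way. Taking $\theta$ as the maximum of these four ratios gives $\theta<1$ and hence $\tilde E_{j+1}\le \theta \tilde E_j$.

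\textbf{Step 3: conclude.} Iterating gives $\tilde E_j\le \theta^{j-1}\tilde E_1$. This says that a positive linear combination of $a_j,b_j,c_j,d_j$ converges to zero geometrically, which is the claim.

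\textbf{Main obstacle: choosing $\delta$.} The only delicate point is selecting a single $\delta$ that meets all the smallness requirements simultaneously. I would take
\[
\delta<\min\left\{\frac{\varepsilon_1}{2(\tau_1+\tau_2)},\ \frac{\varepsilon_2}{2\beta},\ \frac{\varepsilon_3}{2(\gamma_1+\gamma_2)},\ \frac{\mu_1}{2\tau_1},\ \frac{\mu_2}{2\beta},\ \frac{\mu_3}{2\gamma_1}\right\}.
\]
With this choice each coefficient of $\tilde E_j$ is at least half the corresponding $\mu$, so they are positive. Each right-hand coefficient is also bounded by its $\tilde E_j$ coefficient minus $\varepsilon/2$. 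For example, for $a$:
\[
\mu_1-\varepsilon_1+\delta\tau_2 \le (\mu_1-\delta\tau_1)-\varepsilon_1/2 .
\]
This yields explicit ratios of the form $1-\varepsilon/(2\mu)$. The degenerate case $\varepsilon_1=\mu_1$, where the right-hand coefficient of $a$ vanishes, causes no difficulty, since the bound above still holds.
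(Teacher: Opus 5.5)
Your proof is correct and takes the paper's route: the paper also adds $\xi$ times \eqref{eq:Contract2} to \eqref{eq:Contract1}, takes $\xi<\min\{\mu_1/\tau_1,\mu_2/\beta,\mu_3/\gamma_1\}$, and shows that the same weighted quantity (your $\tilde E_j$, its $\psi_n$) contracts. The only difference is cosmetic: the paper forces every coefficient ratio below the single factor $\mu_4/(\mu_4+\xi)$ through quadratic conditions on $\xi$, whereas your explicit choice of $\delta$ with the maximum of the four ratios is simpler and equally valid.
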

\begin{proof}
Choose arbitrary $\xi>0$, combine \eqref{eq:Contract1} and \eqref{eq:Contract2} to obtain 
\begin{equation}
  \label{eq:Contract3}
 \begin{aligned}
  (\mu_1-\tau_1 \xi) a_{n+1} & + (\mu_2-\beta \xi) b_{n+1} + (\mu_3 - {\gamma_1}  \xi )c_{n+1} + (\mu_4 + \xi) d_{n+1} \\
  & \leq \left(\mu_1-\varepsilon_1 +\tau_2\xi \right) a_n + \left(\mu_2 - \varepsilon_2 \right) b_{n} + \left(\mu_3 - \varepsilon_3 + {\gamma_2} \xi \right)c_{n} + \mu_4 d_n.
 \end{aligned}
\end{equation}
The first set of conditions on $\xi$ is that $\xi < \min\left\lbrace \dfrac{\mu_1}{\tau_1},\dfrac{\mu_2}{\beta}, \dfrac{\mu_3}{{\gamma_1}}\right\rbrace$.
Moreover, we also assume that 
\begin{equation}
  \label{eq:Contract4}
  \dfrac{\mu_3-\varepsilon_3 + {\gamma_2} \xi}{\mu_3-{\gamma_1} \xi} < \dfrac{\mu_4}{\mu_4 +\xi} \Longleftrightarrow 
F_1(\xi): = {\gamma_2} \xi^2 + \xi(\mu_3-\varepsilon_3 + {\gamma_1} \mu_4 + {\gamma_2} \mu_4) - \mu_4 \varepsilon_3 < 0.
\end{equation}
Noting that $F_1(\xi)$ has two real zeros $\xi_{-1}< 0 < \xi_1$, and requiring
$$\xi < \min\left\lbrace \dfrac{\mu_1}{\tau_1},\dfrac{\mu_2}{\beta}, \dfrac{\mu_3}{{\gamma_1}}, \xi_1 \right\rbrace,$$
we get that 
\begin{equation}
  \label{eq:Contract5}
 \begin{aligned}
  (\mu_1-\tau_1 \xi) a_{n+1} & + (\mu_2-\beta \xi) b_{n+1} + (\mu_3 - {\gamma_1} \xi )c_{n+1} + (\mu_4 + \xi) d_{n+1} \\
  & \leq \left(\mu_1-\varepsilon_1 +\tau_2\xi \right) a_n + \left(\mu_2 - \varepsilon_2 \right) b_{n} + \frac{\mu_4}{\mu_4+\xi} \left[ \left(\mu_3 - {\gamma_1} \xi \right)c_{n} + (\mu_4 + \xi) d_n \right].
 \end{aligned}
\end{equation}
We impose two additional requirements on $\xi$. The first one is 
\begin{equation}
  \label{eq:Contract6}
  \dfrac{\mu_2-\varepsilon_2}{\mu_2-\beta\xi} < \dfrac{\mu_4}{\mu_4 +\xi} \Longleftrightarrow 
 \xi < \dfrac{\mu_4 \varepsilon_2}{\mu_2-\varepsilon_2 + \beta \mu_4}.
\end{equation}
Thanks to \eqref{eq:Contract6} and \eqref{eq:Contract5}, we have 
\begin{equation}
  \label{eq:Contract7}
 \begin{aligned}
  (\mu_1-\tau_1 \xi) a_{n+1} & + (\mu_2-\beta \xi) b_{n+1} + (\mu_3 - {\gamma_1} \xi )c_{n+1} + (\mu_4 + \xi) d_{n+1} \\
  & \leq \left(\mu_1-\varepsilon_1 +\tau_2\xi \right) a_n + \frac{\mu_4}{\mu_4+\xi} \left[ \left(\mu_2 - \beta \xi \right) b_{n} + \left(\mu_3 - {\gamma_1} \xi \right)c_{n} + (\mu_4 + \xi) d_n \right].
 \end{aligned}
\end{equation}
The second requirement on $\xi$ is that 
\begin{equation}
  \label{eq:Contract8}
  \dfrac{\mu_1-\varepsilon_1+\tau_2 \xi}{\mu_1-\tau_1 \xi} < \dfrac{\mu_4}{\mu_4 +\xi} \Longleftrightarrow 
 F_2(\xi): = \tau_2 \xi^2 + \xi(\mu_1 - \varepsilon_1 + \mu_4 \tau_1 + \mu_4 \tau_2 ) - \mu_4 \varepsilon_1<0.
\end{equation}
Similar to $ F_1(\xi)$, $ F_2(\xi)$ also has two real zeros $\xi_{-2}<0<\xi_2$ and we can select $\xi$ to satisfy
\begin{equation}
\xi < \min\left\lbrace \dfrac{\mu_1}{\tau_1},\dfrac{\mu_2}{\beta}, \dfrac{\mu_3}{{\gamma_1}}, \xi_1, {\frac{\mu_4 \varepsilon_2}{
\mu_2-\varepsilon_2 + \beta \mu_4}}, \xi_2 \right\rbrace. 
\label{eq:Contract9}
\end{equation}
If \eqref{eq:Contract9} holds, we obtain that
\begin{equation}
\psi_n: = (\mu_1-\tau_1 \xi) a_{n} + (\mu_2-\beta \xi) b_{n} + (\mu_3- {\gamma_1} \xi) c_{n} + (\mu_4 + \xi) d_{n}   
\label{eq:Contract10}
\end{equation}
 is a sequence contracting towards $0$ with the contraction coefficient $\lambda := \dfrac{\mu_4}{\mu_4+\xi}$.
\end{proof}
\begin{theorem}[Stability]\label{boundedness} 
Let $(\bu_h, p_h,\bJ_h,\phi_h) \in  (\bX_h,Q_h,\bD_h,S_h) $ be the Finite Element Approximation of $(\bu, p,\bJ,\phi)$ solving \eqref{eq:w1}-\eqref{eq:w4} and $\{\bu_h^n, p_h^n,\bJ_h^n,\phi_h^n\}$ solve Algorithm \ref{algo1}. Let 
\begin{equation}
    \label{eq:Lambda0}
    \Lambda_0: = \frac{1}{\Reynolds}
        - \M \|\nabla \bu_h\| -\dfrac{C_4^2 \|\bu_h\|_{L^4(\Omega)} ^2 }{8 \gamma_1}.
\end{equation}
If $\Lambda_0>0$ and $\rho_1$, $\rho_2$ satisfy
\begin{equation}
        \Lambda_1: = \Lambda_0 - \frac{\rho_1 \M^2 \|\nabla \bu_h\|^2}{2} 
        - \frac{\rho_2 \kappa C_p^2}{2}>0,
        \label{eq:Lambda1}
\end{equation}
then the solution of Algorithm \ref{algo1} is bounded, uniformly in $n$ and $h$, and 
 \begin{equation} 
\bu^{n}_h \xrightarrow{H^1}\bu_h, \, 
\Divergence \bu^{n}_h \xrightarrow{L^2} \Divergence \bu_h, \, 
p^{n}_h \xrightarrow{L^2} p_h, \,
\bJ^{n}_h \xrightarrow{\Hdiv}\bJ_h, \,
\phi^{n}_h \xrightarrow{L^2} \phi_h \, \text{ as } n \rightarrow \infty.
 \end{equation}
\end{theorem}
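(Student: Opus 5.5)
The plan is to work with the errors $\be_u^n=\bu_h^n-\bu_h$, $e_p^n=p_h^n-p_h$, $\be_J^n=\bJ_h^n-\bJ_h$ and $e_\phi^n=\phi_h^n-\phi_h$. Subtracting \eqref{eq:w1}--\eqref{eq:w4} from \eqref{eq:AH1}--\eqref{eq:AH4} gives the error equations. One consistency point must be settled first: the $\gamma_1$, $\gamma_2$ grad--div terms vanish for the discrete solution. This is clear for $\gamma_2$, since $\nabla\cdot\bJ_h=0$ pointwise. For $\gamma_1$ it requires $\nabla\cdot\bX_h\subset Q_h$; otherwise I would compare with the discrete problem augmented by the consistent grad--div term.

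For the nonlinearity I would write
\[
b(\bu_h^n,\bu_h^{n+1},\bv_h)-b(\bu_h,\bu_h,\bv_h)=b(\bu_h^n,\be_u^{n+1},\bv_h)+b(\be_u^n,\bu_h,\bv_h).
\]

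In the momentum error equation I take $\bv_h=\be_u^{n+1}$. In the pressure error equation I take $q_h=e_p^{n+1}/\gamma_1$. I multiply the current-density error equation by $\kappa$ and test with $\bK_h=\be_J^{n+1}$. In the potential error equation I take $\psi_h=\kappa e_\phi^{n+1}/\gamma_2$.

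The polarization identity $2(a-b,a)=\|a\|^2-\|b\|^2+\|a-b\|^2$ then produces telescoping energies. These are $\frac{1}{2\rho_1}\|\nabla\be_u\|^2$, $\frac{1}{2\gamma_1}\|e_p\|^2$, $\frac{\kappa}{2\rho_2}\|\be_J\|_\bB^2$ and $\frac{\kappa}{2\gamma_2}\|e_\phi\|^2$. The corresponding increment terms come along with them.

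The pressure coupling $-(e_p^n,\nabla\cdot\be_u^{n+1})+(\nabla\cdot\be_u^{n+1},e_p^{n+1})$ combines with the pressure equation, which says $e_p^{n+1}-e_p^n=-\gamma_1\Pi_{Q_h}\nabla\cdot\be_u^{n+1}$. Together they leave at least $\frac{\gamma_1}{2}\|\nabla\cdot\be_u^{n+1}\|^2$. The same argument for $\phi$ leaves $\frac{\kappa\gamma_2}{2}\|\nabla\cdot\be_J^{n+1}\|^2$.

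The Lorentz coupling terms are $-\kappa(\be_J^n\times\bB,\be_u^{n+1})$ and $+\kappa(\be_J^{n+1}\times\bB,\be_u^{n+1})$. They cancel except for $\kappa((\be_J^{n+1}-\be_J^n)\times\bB,\be_u^{n+1})$. By Cauchy--Schwarz, Poincaré \eqref{eq:po} and Young, this is at most
\[
\frac{\kappa}{2\rho_2}\|\be_J^{n+1}-\be_J^n\|_\bB^2+\frac{\rho_2\kappa C_p^2}{2}\|\nabla\be_u^{n+1}\|^2 .
\]
The first piece is absorbed by the new penalty increment. This absorption is the whole purpose of the $(\cdot,\cdot)_\bB$ terms, and it explains the $\rho_2$ term in \eqref{eq:Lambda1}.

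For the nonlinear terms, $b(\bu_h^n,\be_u^{n+1},\be_u^{n+1})=0$ by Lemma \ref{lem:trilinear}. I split the remaining term as
\[
b(\be_u^n,\bu_h,\be_u^{n+1})=b(\be_u^{n+1},\bu_h,\be_u^{n+1})-b(\be_u^{n+1}-\be_u^n,\bu_h,\be_u^{n+1}).
\]
The first part is bounded by the third estimate of Lemma \ref{lem:trilinear}. Young's inequality on its divergence piece, $\frac{C_4}{2}\|\nabla\cdot\be_u^{n+1}\|\|\bu_h\|_{L^4(\Omega)}\|\nabla\be_u^{n+1}\|$, gives at most $\frac{\gamma_1}{4}\|\nabla\cdot\be_u^{n+1}\|^2+\frac{C_4^2\|\bu_h\|_{L^4(\Omega)}^2}{4\gamma_1}\|\nabla\be_u^{n+1}\|^2$. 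Balancing constants to match the $1/(8\gamma_1)$ in $\Lambda_0$ is a bookkeeping matter. The second part is bounded by $\M\|\nabla(\be_u^{n+1}-\be_u^n)\|\|\nabla\bu_h\|\|\nabla\be_u^{n+1}\|$. Young's inequality puts $\frac{1}{2\rho_1}\|\nabla(\be_u^{n+1}-\be_u^n)\|^2$ into the increment term and leaves $\frac{\rho_1\M^2\|\nabla\bu_h\|^2}{2}\|\nabla\be_u^{n+1}\|^2$.

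The outcome is an inequality of the form
\[
E^{n+1}+\Lambda_1\|\nabla\be_u^{n+1}\|^2+\kappa\|\be_J^{n+1}\|^2+c\,\|\nabla\cdot\be_u^{n+1}\|^2+c\,\|\nabla\cdot\be_J^{n+1}\|^2+(\text{increments})\le E^n,
\]
where $E^n$ collects the four telescoped energies and $c>0$ depends only on $\gamma_1$, $\gamma_2$ and $\kappa$. Boundedness uniform in $n$ follows at once from $E^n\le E^0$, which depends only on the data through the bound \eqref{eq:uJfembound}. Summing over $n$ shows that $\nabla\be_u^n$, $\nabla\cdot\be_u^n$, $\be_J^n$ and $\nabla\cdot\be_J^n$ tend to zero in $L^2$, and that the increments do as well. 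Alternatively, one can recast the inequality in the form of Lemma \ref{SeqConv2Zero}.

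Convergence of $e_p^n$ and $e_\phi^n$ comes from the discrete inf-sup conditions \eqref{eq:inf} applied to the momentum and current error equations. These give
\[
\beta_s\|e_p^n\|\le C\big(\|\nabla(\be_u^{n+1}-\be_u^n)\|+\|\nabla\be_u^{n+1}\|+\|\nabla\be_u^n\|+\|\be_J^n\|+\|\nabla\cdot\be_u^{n+1}\|\big),
\]
and an analogous bound for $\|e_\phi^n\|$ involving $\bB$-increments of $\be_J$, $\be_J^{n+1}$, $\nabla\cdot\be_J^{n+1}$ and $\be_u^{n+1}$. Every term on the right tends to zero.

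I expect the main obstacle to be the nonlinear splitting: only $\|\nabla\be_u^{n+1}\|^2$ may appear with a coefficient tied to $\Lambda_0$, and the divergence part of $b(\be_u^{n+1}-\be_u^n,\bu_h,\cdot)$ must be shown to be harmless, or absorbed via $\gamma_1$. The grad--div consistency issue mentioned at the start is the other delicate point.
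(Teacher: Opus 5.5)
Apart from the nonlinear term, your argument matches the paper's:
\begin{itemize}
\item the same Lorentz cancellation, absorbed by the new $\bB$-penalty increment;
\item the same net $\frac{\gamma_1}{2}\|\nabla\cdot\be_u^{n+1}\|^2$ and $\frac{\kappa\gamma_2}{2}\|\nabla\cdot\be_J^{n+1}\|^2$ after the pressure/potential coupling (the paper tests with $\delta_h^n$, $\kappa\theta_h^n$ instead of $e_p^{n+1}/\gamma_1$, $\kappa e_\phi^{n+1}/\gamma_2$, with the same net effect);
\item the same inf--sup recovery of $p$ and $\phi$.
\end{itemize}
Your consistency remark is correct, and the paper glosses over it. For Taylor--Hood, $\gamma_1(\nabla\cdot\bu_h,\nabla\cdot\bv_h)\neq 0$ in general, yet \eqref{eq:err1} drops this term without comment.

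The step you flagged is a real gap, and your splitting of the trilinear form itself is what creates it. Writing $b(\be_u^n,\bu_h,\be_u^{n+1})=b(\be_u^{n+1},\bu_h,\be_u^{n+1})-b(\be_u^{n+1}-\be_u^n,\bu_h,\be_u^{n+1})$ moves the divergence to level $n+1$. It also puts $\frac12\big((\nabla\cdot(\be_u^{n+1}-\be_u^n))\bu_h,\be_u^{n+1}\big)$ inside the increment term, which you then bound by $\M$.

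That bound is legitimate only if the second line of Lemma \ref{lem:trilinear} holds for the full skew-symmetric $b$. But then the third line of the lemma, and the $\gamma_1$-term in $\Lambda_0$, would be superfluous. The form of $\Lambda_0$, and the definition of $\M$ through $(\bu\cdot\nabla\bv,\bw)$ in the discrete well-posedness theorem, show that $\M$ is meant to control only the convective part. Under that reading the divergence of the increment is uncontrolled. Your only increment dissipation, $\frac{1}{2\rho_1}\|\nabla(\be_u^{n+1}-\be_u^n)\|^2$, is already used up by the convective piece. Forcing the extra piece into it anyway replaces $\M^2\|\nabla\bu_h\|^2$ in \eqref{eq:Lambda1} by $\big(\M\|\nabla\bu_h\|+\frac{C_4}{2}\|\bu_h\|_{L^4(\Omega)}\big)^2$, which is not the stated hypothesis.

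The missing idea is not to split the form, and to keep the divergence at the old level. Apply the third estimate of Lemma \ref{lem:trilinear} to $b(\be_u^n,\bu_h,\be_u^{n+1})$ directly. Then split only the scalar factor $\|\nabla\be_u^n\|\le\|\nabla\be_u^{n+1}\|+\|\nabla(\be_u^{n+1}-\be_u^n)\|$ in the convective piece; this gives exactly your $\M\|\nabla\bu_h\|$ and $\frac{\rho_1\M^2\|\nabla\bu_h\|^2}{2}$ contributions. For the divergence piece use
\[
\frac{C_4}{2}\|\nabla\cdot\be_u^n\|\,\|\bu_h\|_{L^4(\Omega)}\|\nabla\be_u^{n+1}\|\le\frac{\gamma_1}{2}\|\nabla\cdot\be_u^n\|^2+\frac{C_4^2\|\bu_h\|_{L^4(\Omega)}^2}{8\gamma_1}\|\nabla\be_u^{n+1}\|^2 .
\]
The term $\frac{\gamma_1}{2}\|\nabla\cdot\be_u^n\|^2$ on the right is matched by the $\frac{\gamma_1}{2}\|\nabla\cdot\be_u^{n+1}\|^2$ left on the left after the pressure increment is paid for. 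So it simply joins the telescoping energy, as part of $d_n$ in Lemma \ref{SeqConv2Zero}. This is where the constant $1/(8\gamma_1)$ comes from, and no bound on the divergence of the increment is needed.

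With this change your proof closes under \eqref{eq:Lambda1} as stated. Note that no velocity-divergence dissipation is then left over. Hence $\nabla\cdot\be_u^n\to 0$ follows from $\|\nabla\cdot\bv\|\le\|\nabla\bv\|$ on $\bX$, not from a term $c\|\nabla\cdot\be_u^{n+1}\|^2$ with $c>0$.
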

\begin{proof}
We subtract \eqref{eq:AH1}-\eqref{eq:AH4} from \eqref{eq:w1}-\eqref{eq:w4} to obtain the error equation:
\begin{align}
&\frac{(\nabla(\be_h^{n+1}-\be_h^n),\nabla \bv_h)}{\rho_1} + \Reynolds^{-1} (\nabla \be_h^{n+1}, \nabla \bv_h) + b(\be_h^{n},\bu_h,\bv_h) + 
b(\bu_{h}^{n},\be_h^{n+1},\bv_h)
 \nonumber \\
&  - \kappa(\bZ_h^{n} \times\bB,\bv_h)  +{\gamma_1} (\nabla \cdot \be_h^{n+1},\nabla \cdot \bv_h) - (\delta_h^{n},\nabla \cdot \bv_h) = 0, \label{eq:err1} \\[0.2cm]
& (\delta_h^{n+1}-\delta_h^n,q_h) + {\gamma_1}(\nabla \cdot \be_h^{n+1},q_h) = 0, \label{eq:err2} \\[0.2cm]
& \frac{(\bZ_h^{n+1}-\bZ_h^n,\bK_h)_{\bB}}{\rho_2} + (\bZ_h^{n+1},\bK_h)-(\theta_h^{n},\nabla \cdot \bK_h) + (\bK_h \times \bB,\be_h^{n+1}) 
+{\gamma_2} (\nabla \cdot \bZ_h^{n+1},\nabla \cdot \bK_h) = 0, \label{eq:err3} \\
& (\theta_h^{n+1}-\theta_h^n, \psi_h) + {\gamma_2}(\nabla \cdot \bZ_h^{n+1},\psi_h)= 0, \label{eq:err4}
\end{align}
where $\be_h^n=\bu_h-\bu_h^n$, $\delta_h^n=p_h-p_h^n$, $\bZ_h^n=\bJ_h-\bJ_h^n$, $\theta_h^n=\bphi_h-\bphi_h^n$. 
Next, letting $(\bv_h,q_h,\bK_h,\psi_h) = (\be_h^{n+1},\delta_h^{n}, \kappa\bZ^{n+1}_h, \kappa\theta^{n}_h)$ in \eqref{eq:err1}-\eqref{eq:err4} and owing to $ b(\bu^n_h, \be_h^{n+1}, \be_h^{n+1})=0$ we obtain:
\begin{align}
&\frac{\|\nabla \be_h^{n+1}\|^2-\|\nabla \be_h^{n}\|^2 + \|\nabla(\be_h^{n+1}-\be_h^n)\|^2}{2\rho_1} + \frac{\| \nabla \be_h^{n+1}\|^2}{\Reynolds} + {\gamma_1} \|\nabla \cdot \be_h^{n+1}\|^2 - (\delta_h^{n},\nabla \cdot \be_h^{n+1}) \nonumber \\
&\quad = - b(\be_h^{n},\bu_h,\be^{n+1}_h)  + \kappa(\bZ_h^{n} \times\bB,\be_h^{n+1} ), \label{eq:err5} \\[0.2cm]
& \frac{ \|\delta_h^{n+1}\|^2 - \| \delta_h^n \|^2 -
\|\delta_h^{n+1}-\delta_h^{n}\|^2  }{2{\gamma_1}} +  (\nabla \cdot \be_h^{n+1}, \delta_h^{n}) = 0, \label{eq:err6} \\
& \frac{\kappa \Big(\|\bZ_h^{n+1} \|^2_{\bB} - \|\bZ_h^n\|^2_{\bB} + \|\bZ_h^{n+1}-\bZ_h^n\|^2_{\bB}\Big)}{2\rho_2}  + \kappa\| \bZ_h^{n+1} \|^2 + \kappa {\gamma_2} \| \nabla \cdot \bZ_h^{n+1} \|^2 - \kappa(\theta_h^{n},\nabla \cdot \bZ_h^{n+1}) \notag \\
&\quad = - \kappa(\bZ_h^{n+1} \times \bB,\be_h^{n+1}), \label{eq:err7} \\[0.2cm]
& \frac{\kappa (\|\theta_h^{n+1}\|^2 - \| \theta_h^n \|^2 - 
\|\theta_h^{n+1}-\theta_h^{n}\|^2 )}{2{\gamma_2}} +  \kappa(\nabla \cdot \bZ_h^{n+1}, \theta_h^{n}) = 0. \label{eq:err8}
\end{align}
Next, we add the equations {\eqref{eq:err5}-\eqref{eq:err8}} and bound the terms in the RHS. Applying Cauchy-Schwarz, Poincar\'e and Young's inequalities,
the contributions of the Lorentz forcing terms from \eqref{eq:err5} and \eqref{eq:err7} can be expressed as:
\begin{equation}
    \begin{aligned}
    \kappa (\bZ_h^{n} \times\bB,\be_h^{n+1} )
        - \kappa (\bZ_h^{n+1} \times \bB,\be_h^{n+1})
        & = \kappa\left( (\bZ_h^{n}-\bZ_h^{n+1}) \times\bB,\be_h^{n+1} \right)  \\
        &\leq\kappa C_p \|(\bZ_h^{n}-\bZ_h^{n+1}) \times\bB\|\| \nabla \be_h^{n+1}\|\\
        & \le \frac{\kappa}{2 \rho_2}\|\bZ_h^{n+1}-\bZ_h^{n}\|_{\bB}^2 
       + \frac{\rho_2 \kappa C_p^2}{2}  \| \nabla \be_h^{n+1}\|^2. 
    \end{aligned}
    \label{eq:err9}
\end{equation}
 where $C_p$ is Poincar\'e-Friedrichs’s constant. Using Lemma \ref{lem:trilinear}, the contribution of the convective term can be bounded as
\begin{equation}
\label{eq:err10}
    \begin{aligned}
    |b(\be_h^n,\bu_h,\be_h^{n+1})| & \leq \M \|\nabla \bu_h \| \|\nabla \be_h^n\| \|\nabla \be_h^{n+1}\| + \dfrac{C_4}{2}\|\nabla \cdot \be_h^n\| \|\bu_h\|_{L^4(\Omega)}\|\nabla \be_h^{n+1}\|\\
    & \leq \M \|\nabla \bu_h\|\left( \|\nabla \be_h^{n+1}\|^2 + \|\nabla \be_h^{n+1}\| \| \nabla (\be_h^{n+1}-\be_h^{n})\|\right) \\
     & + \dfrac{\gamma_1}{2}\|\nabla \cdot \be_h^n\|^2+ \dfrac{C_4^2 \|\bu_h\|_{L^4(\Omega)}^2 }{8 \gamma_1} \|\nabla \be_h^{n+1}\|^2\\
    & \leq \left[\M \|\nabla \bu_h\| +\dfrac{C_4^2 \|\bu_h\|_{L^4(\Omega)}^2 }{8 \gamma_1}+ \frac{\rho_1 \M^2 \|\nabla \bu_h\|^2}{2}  \right] \|\nabla \be_h^{n+1}\|^2 +
    \dfrac{\|\nabla (\be_h^{n+1}-\be_h^n)\|^2}{2\rho_1} \\
    & + \dfrac{\gamma_1}{2}\|\nabla \cdot \be_h^n\|^2. 
\end{aligned}
\end{equation}
Setting $q_h=\delta_h^{n+1}-\delta_h^n$ and $\psi_h=\theta_h^{n+1}-\theta_h^n$ in \eqref{eq:err2} and \eqref{eq:err4} produces
\begin{eqnarray}
        \|\delta_h^{n+1}-\delta_h^n\| \le \gamma_1 \|\nabla\cdot \be_h^{n+1} \|,\nonumber\\
        \|\theta_h^{n+1}-\theta_h^n\| \le \gamma_2 \|\nabla\cdot \bZ_h^{n+1} \|.\label{eq:er11}
\end{eqnarray}
Combining \eqref{eq:err5}-\eqref{eq:er11} gives
\begin{eqnarray}
\label{eq:err11}
\lefteqn{\left[\frac{1}{2 \rho_1} +  \Lambda_0 - \frac{\rho_1 \M^2 \|\nabla \bu_h\|^2}{2} 
- \frac{\rho_2 \kappa C_p^2}{2} \right] \|\nabla \be_h^{n+1}\|^2} \nonumber \\
       &&
+ \dfrac{\gamma_1}{2} \|\nabla \cdot \be_h^{n+1}\|^2+ \dfrac{\gamma_1}{2} \left(\|\nabla \cdot \be_h^{n+1}\|^2-\dfrac{1}{\gamma_1^2}\|\delta_h^{n+1}-\delta_h^{n}\|^2\right)\nonumber \\
       &&
+ \kappa \left(\| \bZ_h^{n+1} \|^2 + \frac{\|\bZ_n^{n+1}\|_\bB^2}{2 \rho_2} \right) + \frac{1}{2{\gamma_1} }\| \delta^{n+1}_h \|^2 + \frac{\kappa}{2{\gamma_2}} 
\|\theta^{n+1}_h \|^2 \nonumber\\
       &&
+ \dfrac{\kappa \gamma_2}{2} \| \nabla \cdot \bZ_h^{n+1} \|^2 + \dfrac{\kappa \gamma_2}{2}  \left(\| \nabla \cdot \bZ_h^{n+1} \|^2-\dfrac{1}{\gamma_2^2} \|\theta_h^{n+1}-\theta_h^{n}\|^2\right)\nonumber\\
    &\le& \frac{1}{2\rho_1}\|\nabla \be_h^{n}\|^2 + {\dfrac{\gamma_1}{2}\|\nabla \cdot \be_h^n\|^2}+  \frac{\kappa}{2\rho_2} \|\bZ_h^{n} \|_\bB^2 + \frac{1}{{2{\gamma_1} }}\| \delta^{n}_h \|^2 + \frac{\kappa}{{2{\gamma_2} }}\| \theta^{n}_h \|^2,
\end{eqnarray}
where $\Lambda_0$ is given in \eqref{eq:Lambda0}. Note that choosing $\rho_1 $ and $\rho_2$ as per assumption \eqref{eq:Lambda1} ensures that the coefficient of $\|\nabla \be_h^{n+1} \|^2$ is greater than that of $\|\nabla \be_h^{n} \|^2$.
Applying \eqref{eq:er11}, the nonnegative terms in \eqref{eq:err11} can be dropped, which yields 
\begin{equation}
\label{eq:err15}
\begin{aligned}   
\left[\frac{1}{2 \rho_1} + \Lambda_1 \right] \|\nabla \be_h^{n+1}\|^2 &+ \kappa \left(\| \bZ_h^{n+1} \|^2 + \frac{\|\bZ_n^{n+1}\|_\bB^2}{2 \rho_2} \right)
 + \dfrac{\gamma_1}{2} \|\nabla \cdot \be_h^{n+1}\|^2+ \frac{1}{2{\gamma_1} }\| \delta^{n+1}_h \|^2 \\
& + \frac{\kappa}{2{\gamma_2}}\| \theta^{n+1}_h \|^2 + \dfrac{\kappa \gamma_2}{2} \| \nabla \cdot \bZ_h^{n+1} \|^2 \\
& \le \frac{1}{2\rho_1}\|\nabla \be_h^{n}\|^2 
+ {\dfrac{\gamma_1}{2}\|\nabla \cdot \be_h^n\|^2}+ \kappa \frac{\|\bZ_h^{n}\|_\bB^2}{2 \rho_2} + \frac{1}{{2{\gamma_1} }}\| \delta^{n}_h \|^2 + \frac{\kappa}{{2{\gamma_2} }}\| \theta^{n}_h \|^2. 
\end{aligned}
\end{equation}
Then uniform boundedness follows by induction on $n$ and the triangle inequality. Moreover, using the Lemma \ref{SeqConv2Zero} with 
$$
a_n = \|\nabla \be_h^{n}\|^2, \, 
b_n = \kappa\| \bZ_h^{n}\|^2, \, c_n=  \frac{\kappa \gamma_2}{2}\| \nabla \cdot \bZ_h^{n} \|^2, $$ 
$$d_n = {\frac{\gamma_1}{2}\| \nabla \cdot \be_h^{n} \|^2 }+\kappa \frac{\|\bZ_h^{n}\|_\bB^2}{2 \rho_2} + \frac{1}{2{\gamma_1} }\| \delta^{n}_h \|^2 + \frac{\kappa}{2{\gamma_2}}\| \theta^{n}_h \|^2, $$
and
$$
\mu_1=\frac{1}{2\rho_1} + \Lambda_1, \, 
\mu_2=1, \, \mu_3=1, \, \varepsilon_1= \Lambda_1, \, \varepsilon_2=1, \varepsilon_3=1,$$
it follows that 
\begin{equation}
    \label{eq:err16}
    \lim \limits_{n \rightarrow \infty} \|\nabla \be_h^{n}\|^2=\lim \limits_{n \rightarrow \infty}  \|\nabla \cdot \be_h^{n}\|^2 =\lim \limits_{n \rightarrow \infty} \| \bZ_h^{n} \|^2=\lim \limits_{n \rightarrow \infty}  \|\nabla \cdot \bZ_h^{n}\|^2=
    \lim \limits_{n \rightarrow \infty} \| \bZ_h^{n} \|_\bB^2 = 0.
\end{equation}
The last result is true thanks to 
$$\|\bZ\|_\bB \le \|\bB\|_{\bL^\infty(\Omega)} \|\bZ\|.$$
Next we show the convergence of pressure approximations. To this end, apply the inf-sup conditions 
in \eqref{eq:err3} to get 
\begin{equation}    
\label{eq:err17}
\begin{aligned}
\beta_m \| \theta_h^{n} \| &\le \rho_2^{-1} \|\bB\|_{\bL^\infty(\Omega)} \|\bZ_h^{n+1}-\bZ_h^{n} \|_{\bB} +\| \bZ_h^{n+1} \|+ \gamma_2\| \nabla \cdot \bZ_h^{n+1} \| + C_p \| \bB \|_{\bL^\infty(\Omega)} \|\nabla \be_h^{n+1} \| \\
& \leq  \left(1+ \rho_2^{-1}\|\bB\|_{\bL^\infty(\Omega)}^2\right)\|\bZ_h^{n+1}\|+ \rho_2^{-1}\|\bB\|_{\bL^\infty(\Omega)}^2\|\bZ_h^{n}\| \\ 
& + \gamma_2\| \nabla \cdot \bZ_h^{n+1} \| + C_p \| \bB \|_{\bL^\infty(\Omega)} \|\nabla \be_h^{n+1} \|,
\end{aligned}
\end{equation}
and then let $n \rightarrow \infty$ to get the desired result. The convergence of the electric potential $\phi^n_h$ can be established similarly.
\end{proof}

 \begin{remark}
From the "small data" condition \eqref{eq:Lambda1} it follows that ${\gamma_1} \gg 1$ is more favorable for the convergence of Algorithm \ref{algo1}, which also deteriorates the conditioning number of the associated linear system. 
 \end{remark}
%
\begin{theorem}[Contractivity]\label{contractivity} 
Let $(\be^n_h, \delta^n_h,\bZ^n_h,\theta^n_h) $ be as in the Theorem \ref{boundedness}. If $\Lambda_0 > 0 $ and $\rho_1, \rho_2$ satisfy \eqref{eq:Lambda1}, then there exists $\mu_j>0$, $j=\overline{1,4}$ and $\lambda \in (0,1)$ such that
 \begin{equation} 
 \begin{aligned}
 \psi_n: & = \mu_1 \| \nabla \be^{n}_h \|^2
+ \mu_2 \| \bZ_h^{n} \|^2+ \mu_3 
\| \nabla \cdot \bZ_h^{n} \|^2   \\
& + \mu_4 \left({\frac{\gamma_1}{2}\| \nabla \cdot \be_h^{n}  \|^2}+\kappa \frac{\| \bZ^n_h\|_\bB^2}{2\rho_2} + \dfrac{1}{2{\gamma_1}}\| \delta^{n}_h \|^2 + \frac{{\kappa} }{2{\gamma_2}}\| \theta^{n}_h \|^2\right),
 \end{aligned}
\label{eq:ContractionThm1}
 \end{equation}
satisfies 
 \begin{equation}
     \psi_{n+1} < \lambda \psi_n.
     \label{eq:ContractionThm2}
 \end{equation}
\end{theorem}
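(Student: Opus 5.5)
The plan is to apply Lemma \ref{LemmaContractionSequence} to the energy inequality \eqref{eq:err15}, keeping the sequences used for Lemma \ref{SeqConv2Zero} in the proof of Theorem \ref{boundedness}:
\[
a_n=\|\nabla \be_h^n\|^2,\quad b_n=\kappa\|\bZ_h^n\|^2,\quad c_n=\tfrac{\kappa\gamma_2}{2}\|\nabla\cdot\bZ_h^n\|^2,
\]
with $d_n$ the bracketed quantity multiplying $\mu_4$ in \eqref{eq:ContractionThm1}. Inequality \eqref{eq:err15} then reads exactly as \eqref{eq:Contract1}, with $\mu_1=\frac{1}{2\rho_1}+\Lambda_1$, $\varepsilon_1=\Lambda_1$, $\mu_2=\mu_3=\varepsilon_2=\varepsilon_3=1$ and $\mu_4=1$. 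Here $\Lambda_1>0$ is given by \eqref{eq:Lambda1}. The coefficients on the left of \eqref{eq:err15} attached to $b_{n+1}$ and $c_{n+1}$ appear with no matching term on the right, so $\mu_j-\varepsilon_j=0$ there. The hypotheses $0<\varepsilon_j\le\mu_j$ therefore hold.

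The key remaining step is the companion bound \eqref{eq:Contract2}, namely a control of $d_{n+1}$ by $a_{n+1},a_n,b_{n+1},c_{n+1},c_n$. I would bound each of the four pieces of $d_{n+1}$ separately.
\begin{itemize}
\item The velocity divergence satisfies $\frac{\gamma_1}{2}\|\nabla\cdot\be_h^{n+1}\|^2\le C\gamma_1 a_{n+1}$, since $\|\nabla\cdot\bv\|\le\sqrt d\,\|\nabla\bv\|$.
\item The $\bB$-seminorm satisfies $\|\bZ_h^{n+1}\|_\bB^2\le\|\bB\|_{\bL^\infty}^2\|\bZ_h^{n+1}\|^2$, which is controlled by $b_{n+1}$.
\item For the pressure error, I would use the inf-sup condition \eqref{eq:inf} in \eqref{eq:err1} with index $n+1$. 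This bounds $\beta_s\|\delta_h^{n+1}\|$ by terms in $\|\nabla\be_h^{n+2}\|$ and $\|\nabla\be_h^{n+1}\|$. The index shift is awkward, because \eqref{eq:err1} pairs $\delta_h^n$ with $\be_h^{n+1}$. Two remedies are available. One is to apply the inf-sup condition at index $n$, which gives $\|\delta_h^n\|\lesssim a_{n+1}+a_n$ plus $\|\bZ_h^n\|$, and then use \eqref{eq:er11} to write $\|\delta_h^{n+1}\|\le\|\delta_h^n\|+\gamma_1\|\nabla\cdot\be_h^{n+1}\|$. The other is to shift the whole sequence index in the lemma.
\item The electric potential $\theta_h$ is treated the same way through \eqref{eq:err17}, which produces terms in $\|\bZ_h^{n+1}\|$, $\|\bZ_h^n\|$, $\|\nabla\cdot\bZ_h^{n+1}\|$ and $\|\nabla\be_h^{n+1}\|$.
\end{itemize}

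The main obstacle is that these estimates bring in $b_n=\kappa\|\bZ_h^n\|^2$, which Lemma \ref{LemmaContractionSequence} does not allow in \eqref{eq:Contract2}. To handle this I would rerun the proof of that lemma with an additional term $\beta' b_n$. The right side of \eqref{eq:Contract3} then gains $\beta'\xi b_n$, and condition \eqref{eq:Contract6} becomes
\[
\frac{\mu_2-\varepsilon_2+\beta'\xi}{\mu_2-\beta\xi}<\frac{\mu_4}{\mu_4+\xi}.
\]
Since $\mu_2-\varepsilon_2=0$, this is a quadratic inequality in $\xi$ with a positive root, exactly like $F_1$ and $F_2$, so it is still satisfiable for small $\xi>0$.

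With all of this in place, choosing $\xi$ below the minimum in \eqref{eq:Contract9} (together with the new root) yields the contraction $\psi_{n+1}\le\lambda\psi_n$ with $\lambda=\mu_4/(\mu_4+\xi)<1$. The weights in \eqref{eq:ContractionThm1} are read off from \eqref{eq:Contract10}, after renormalizing $\mu_4+\xi$ and absorbing the factors $\kappa$ and $\kappa\gamma_2/2$ into $\mu_2$ and $\mu_3$. All of these weights are positive because $\xi<\mu_1/\tau_1$, $\xi<\mu_2/\beta$ and $\xi<\mu_3/\gamma_1$.
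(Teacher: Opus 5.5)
Your proposal is correct and follows the same route as the paper: \eqref{eq:err15} serves as \eqref{eq:Contract1} with $\mu_4=1$, inf-sup bounds on $\delta_h$ and $\theta_h$ supply \eqref{eq:Contract2}, and Lemma~\ref{LemmaContractionSequence} concludes. You are in fact more careful than the paper, whose bounds \eqref{eq:ContractionThm4} control $\delta_h^n,\theta_h^n$ rather than $d_{n+1}$ and contain $\|\bZ_h^n\|$ (a $b_n$ term that \eqref{eq:Contract2} does not permit); your index shift via \eqref{eq:er11} and your extra $\beta' b_n$ term in the lemma (one more quadratic condition on $\xi$, satisfiable for small $\xi$ because $\varepsilon_2>0$) close exactly that gap, and you should also state explicitly that the coefficient $\M\|\nabla\bu_h^n\|$ in the pressure bound is bounded uniformly in $n$ by Theorem~\ref{boundedness}.
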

\begin{proof}
By using the notation from the proof of the last Theorem, and setting $\mu_4=1$,
we see that \eqref{eq:err15} is equivalent to \eqref{eq:Contract1}. Moreover, thanks to the inf-sup conditions, we get
\begin{equation}
\label{eq:ContractionThm4}
    \begin{aligned}
\beta_s \|\delta_h^{n}\| & \le \left( \rho^{-1}_1 + \frac{1}{\Reynolds} + \M \|\nabla \bu_h^n \| \right)  \| \nabla \be_h^{n+1} \| 
   + \left( \rho^{-1}_1 + \M \|\nabla \bu_h \| \right)  \| \nabla \be_h^{n} \| \\
   & + \gamma_1 \| \Divergence \be_h^{n+1} \| + \kappa C_p \| \bB \|_{\bL^\infty(\Omega)}  \|\bZ^n_h\|,\\[0.2cm]
   \beta_m \| \theta_h^{n}\| & \le  \left(1+ \rho_2^{-1}\|\bB\|_{\bL^\infty(\Omega)}^2\right)\|\bZ_h^{n+1}\|+ \rho_2^{-1}\|\bB\|_{\bL^\infty(\Omega)}^2\|\bZ_h^{n}\| \\ 
   & + \gamma_2\| \nabla \cdot \bZ_h^{n+1} \| + C_p \| \bB \|_{\bL^\infty(\Omega)} \|\nabla \be_h^{n+1} \|,
    \end{aligned}
\end{equation}
which together with the uniform boundedness of the solution implies \eqref{eq:Contract2}. The proof is then completed by application of Lemma \ref{LemmaContractionSequence}. 
\end{proof}
\begin{corollary}\label{Errbound} 
Assume that $(\bu_h,p_h,\bJ_h,\phi_h)$ and $(\bu_h^n,p_h^n,\bJ_h^n,\phi_h^n)$ solve  \eqref{eq:w1}-\eqref{eq:w4} and \eqref{eq:AH1}-\eqref{eq:AH4}, respectively. Then the error estimate satisfies
\begin{eqnarray}
\|\nabla(\bu_h- \bu_h^{n+1})\|^2 &\leq&\zeta  \lambda^{n+1}\left(\frac{1}{2 \rho_1} + \Lambda_1 \right)^{-1}\dfrac{\|\bf F\|_*^2}{C_{\min}^2}, \label{eq:erresma1}\\ 
        \|p_h-p_h^{n+1}\|^2 &\leq &2\zeta \lambda^{n+1} {{\gamma_1}}\dfrac{\|\bf F\|_*^2}{C_{\min}^2},  \label{eq:erresm2} \\
        \|\bJ_h-\bJ_h^{n+1}\|_{\Hdiv}^2&\leq& \zeta \lambda^{n+1}  \dfrac{\|\bf F\|_*^2}{C_{\min}^2}, \label{eq:erresm3}\\
        \|\phi_h-\phi_h^{n+1}\|^2  &\leq& 2\zeta \lambda^{n+1} {\gamma_2} \kappa^{-1}\dfrac{\|\bf F\|_*^2}{C_{\min}^2},\label{eq:erresm4}
   \end{eqnarray}
   where
 \begin{align}
     \zeta=&
 \frac{1}{2 \rho_1} + \Lambda_1+\dfrac{\gamma_1}{2}+1
+\frac{ \kappa\|\bB\|_{\bL^\infty(\Omega)}^2}{2 \rho_2 } + \dfrac{\beta_s^{-2}}{{2\gamma_1}}\Big(\Reynolds^{-1}+\kappa C_p \| \bB \|_{\bL^\infty(\Omega)}+(\sigma+1)C_{\min}\Big)^2\nonumber\\
&+\dfrac{ \kappa \beta_m^{-2} }{2\gamma_2} \left(1+C_p \| \bB \|_{\bL^\infty(\Omega)} +C_{\min}\right)^2 . \label{eq:zeta}
 \end{align}
\end{corollary}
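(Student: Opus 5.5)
The corollary follows from the contraction of Theorem \ref{contractivity}: unrolling \eqref{eq:ContractionThm2} gives $\psi_{n+1}\le \lambda^{n+1}\psi_0$. It then remains to bound $\psi_0$ by $\zeta\,\|\bf F\|_*^2/C_{\min}^2$ and to read off each error component from $\psi_{n+1}$.

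\emph{Weights.} In the proof of Lemma \ref{LemmaContractionSequence} the contracting quantity is \eqref{eq:Contract10}, with weights $\mu_1-\tau_1\xi\le \mu_1=\frac{1}{2\rho_1}+\Lambda_1$, $\mu_2-\beta\xi\le 1$, $\mu_3-\gamma_1\xi\le 1$ and $\mu_4+\xi$ with $\mu_4=1$. I would take $\xi$ small, so that the first three weights are bounded below by fixed fractions of $\mu_1,\mu_2,\mu_3$, and $\mu_4+\xi\ge 1$. The stated constants (the factor $(\frac{1}{2\rho_1}+\Lambda_1)^{-1}$ in \eqref{eq:erresma1}, the factor $2\gamma_1$ in \eqref{eq:erresm2}, $2\gamma_2\kappa^{-1}$ in \eqref{eq:erresm4}, and no factor in \eqref{eq:erresm3}) suggest dividing $\psi_{n+1}$ by the weight of each term, treating them as essentially $\mu_j$, and absorbing any slack into $\zeta$. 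This gives:
\begin{itemize}
\item $\|\nabla \be_h^{n+1}\|^2\le \psi_{n+1}/\mu_1$;
\item $\|\delta_h^{n+1}\|^2\le 2\gamma_1\psi_{n+1}$ and $\|\theta_h^{n+1}\|^2\le 2\gamma_2\kappa^{-1}\psi_{n+1}$, from the $d_n$ block;
\item $\|\bZ_h^{n+1}\|_{\Hdiv}^2=\|\bZ_h^{n+1}\|^2+\|\nabla\cdot\bZ_h^{n+1}\|^2\le \psi_{n+1}$, after normalizing the $\mu_2,\mu_3$ weights to $1$.
\end{itemize}

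\emph{Bounding $\psi_0$.} Since the iteration starts from zero, $\be_h^0=\bu_h$, $\bZ_h^0=\bJ_h$, $\delta_h^0=p_h$ and $\theta_h^0=\phi_h$. By \eqref{eq:uJfembound}, $\|\nabla\bu_h\|^2+\|\bJ_h\|_{\Hdiv}^2\le \|\bf F\|_*^2/C_{\min}^2$. The remaining terms are bounded as follows:
\begin{itemize}
\item $\|\nabla\cdot\bu_h\|\le\|\nabla\bu_h\|$;
\item $\|\bJ_h\|_\bB\le\|\bB\|_{\bL^\infty(\Omega)}\|\bJ_h\|$.
\end{itemize}
These produce the terms $\frac{1}{2\rho_1}+\Lambda_1+\frac{\gamma_1}{2}+1+\frac{\kappa\|\bB\|_{\bL^\infty(\Omega)}^2}{2\rho_2}$ in \eqref{eq:zeta}.

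\emph{Main obstacle: bounding $p_h$ and $\phi_h$.} This requires the discrete inf-sup conditions \eqref{eq:inf} applied to \eqref{eq:w1} and \eqref{eq:w3}.

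For the pressure, \eqref{eq:w1} gives
\begin{equation*}
\beta_s\|p_h\|\le \Reynolds^{-1}\|\nabla\bu_h\|+\M\|\nabla\bu_h\|^2+\kappa C_p\|\bB\|_{\bL^\infty(\Omega)}\|\bJ_h\|+\|\bff\|_{-1}.
\end{equation*}
I would then bound $\M\|\nabla\bu_h\|\le \M\|\bf F\|_*/C_{\min}=\sigma C_{\min}$ and $\|\bff\|_{-1}\le\|\bf F\|_*=C_{\min}\cdot\|\bf F\|_*/C_{\min}$. Factoring out $\|\bf F\|_*/C_{\min}$ gives
\begin{equation*}
\|p_h\|\le \beta_s^{-1}\bigl(\Reynolds^{-1}+\kappa C_p\|\bB\|_{\bL^\infty(\Omega)}+(\sigma+1)C_{\min}\bigr)\|\bf F\|_*/C_{\min}.
\end{equation*}

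For the potential, \eqref{eq:w3} similarly gives
\begin{equation*}
\beta_m\|\phi_h\|\le \|\bJ_h\|+C_p\|\bB\|_{\bL^\infty(\Omega)}\|\nabla\bu_h\|+\|\bg\|\le\bigl(1+C_p\|\bB\|_{\bL^\infty(\Omega)}+C_{\min}\bigr)\|\bf F\|_*/C_{\min}.
\end{equation*}

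Inserting these into the $\frac{1}{2\gamma_1}\|\delta_h^0\|^2$ and $\frac{\kappa}{2\gamma_2}\|\theta_h^0\|^2$ terms of $\psi_0$ produces exactly the last two terms of $\zeta$. The delicate point is bookkeeping rather than analysis: the powers of $C_{\min}$ must match the claimed form, and the weights $\mu_j\mp\xi$ must not spoil the stated constants. If they do, I would shrink $\xi$ or absorb the discrepancy into $\zeta$.
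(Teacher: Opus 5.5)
Your proposal follows the paper's proof essentially step by step. You unroll $\psi_{n+1}<\lambda\psi_n$, use $(\be_h^0,\bZ_h^0,\delta_h^0,\theta_h^0)=(\bu_h,\bJ_h,p_h,\phi_h)$, bound $\psi_0$ via \eqref{eq:uJfembound} and the inf-sup estimates for $p_h,\phi_h$ with exactly the paper's constants, and then read off each error component. The weight bookkeeping you flag is glossed over in the same way by the paper, which simply takes the weights to be $\frac{1}{2\rho_1}+\Lambda_1,1,1,1$. Strictly, the $\xi$-shifted weights in \eqref{eq:Contract10}, together with the factors $\kappa$ and $\kappa\gamma_2/2$ carried by $b_n,c_n$, give the estimates only up to $\xi$- and $\kappa$-dependent factors in front of $\zeta$. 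Since $\zeta$ is fixed by \eqref{eq:zeta}, \emph{absorbing the slack into $\zeta$} is not actually available to you.
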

\begin{proof} It follows from the previous theorem 
that \begin{equation}
\psi_{n+1} < \lambda \psi_n,
   \label{eq:ContractionCor1}
    \end{equation}
    where
 \begin{equation}
     \psi_n: = \mu_1 \| \nabla \be^{n}_h \|^2
+ \mu_2 \| \bZ_h^{n} \|^2+ \mu_3 
\| \nabla \cdot \bZ_h^{n} \|^2  
+ \mu_4 \left(\dfrac{\gamma_1}{2}\| \nabla \cdot \be_h^{n} \|^2+\kappa \frac{\| \bZ^n_h\|_\bB^2}{2\rho_2} + \dfrac{1}{2{\gamma_1}}\| \delta^{n}_h \|^2 + \frac{{\kappa} }{2{\gamma_2}}\| \theta^{n}_h \|^2\right),
 \end{equation}
with the same parameters as in Theorem \eqref{contractivity}. 
Utilizing induction results in
 \begin{equation}
     \psi_{n+1} \leq \lambda^{n+1}\psi_0, \label{eq:ContractionCor2}
 \end{equation}
 where 
 $$ 
\psi_0 = \mu_1 \| \nabla \be^{0}_h \|^2
+ \mu_2 \| \bZ_h^{0} \|^2+\mu_3 
\| \nabla \, \cdot \, \bZ_h^{0} \|^2
+ \mu_4 \left(\dfrac{\gamma_1}{2}\| \nabla \, \cdot \, \be_h^{0} \|^2+\kappa \dfrac{\| \bZ^0_h\|_\bB^2}{2\rho_2}+\dfrac{1}{2{\gamma_1}}\| \delta^{0}_h \|^2 + \dfrac{{\kappa} }{2{\gamma_2}}\| \theta^{0}_h \|^2\right).$$ 
Since $(\bu_h^0,p_h^0,\bJ_h^0,\phi_h^0) = (0,0,0,0)$, then one has
\begin{equation}
        \be^{0}_h=\bu_h, \ 
         \bZ_h^0=\bJ_h,\ 
         \delta_h^0=p_h, \ 
         \theta_h^0= \phi_h. \label{eq:err0}
          \end{equation}
Applying the inf-sup condition leads to
\begin{eqnarray}
    \|p_h\| &\leq& \beta_s^{-1} \Big(\Reynolds^{-1}\|\nabla \bu_h\| +\M \|\nabla \bu_h \|^2 +\kappa C_p \| \bB \|_{\bL^\infty(\Omega)} \|\bJ_h\|+\|\bf f\|_{-1}\Big)\nonumber\\
    &\leq&\beta_s^{-1} \left(\Reynolds^{-1}+\kappa C_p \| \bB \|_{\bL^\infty(\Omega)}+(\sigma+1)C_{\min}\right)\dfrac{\|\bf F\|_*}{C_{\min}},
    \label{eq:ph}\\[0.2cm]
    \|\phi_h\| &\leq& \beta_m^{-1}\Big(\|\bJ_h\|+C_p \| \bB \|_{\bL^\infty(\Omega)} \|\nabla \bu_h\|+\|\bg\|\Big)\nonumber\\
    &\leq&\beta_m^{-1}\left(1+C_p \| \bB \|_{\bL^\infty(\Omega)}  +C_{\min}\right)\dfrac{\|\bf F\|_*}{C_{\min}}.\label{eq:phi}
    \end{eqnarray}
By using \eqref{eq:uJfembound}, and inserting \eqref{eq:ph} and \eqref{eq:phi} into \eqref{eq:ContractionCor2}, we get
\begin{align}
\mu_1 \| \nabla \be^{n+1}_h \|^2
&+ \mu_2 \| \bZ_h^{n+1} \|^2+\mu_3
\| \nabla \cdot \bZ_h^{n+1} \|^2 \\
& + \mu_4 \left(\dfrac{\gamma_1}{2}\| \nabla \cdot \be_h^{n+1} \|^2+\kappa \frac{\| \bZ^{n+1}_h\|_\bB^2}{2\rho_2} +\dfrac{1}{2{\gamma_1}}\| \delta^{n+1}_h \|^2 + \frac{{\kappa} }{2{\gamma_2}}\| \theta^{n+1}_h \|^2\right)\nonumber\\
\leq & \lambda^{n+1}
\Bigg( \frac{1}{2 \rho_1} + \Lambda_1+\frac{\gamma_1}{2}+1
+\frac{ \kappa\|\bB\|_{\bL^\infty(\Omega)}^2}{2 \rho_2 } + \dfrac{\beta_s^{-2}}{{2\gamma_1}}\Big(\Reynolds^{-1}+\kappa C_p\| \bB \|_{\bL^\infty(\Omega)}+(\sigma+1)C_{\min}\Big)^2\nonumber\\
&+\dfrac{ \kappa \beta_m^{-2} }{2\gamma_2} \left(1+C_p \| \bB \|_{\bL^\infty(\Omega)} +C_{\min}\right)^2\Bigg)\dfrac{\|\bf F\|_*^2}{C_{\min}^2},
\end{align}
which yields the results \eqref{eq:erresma1}-\eqref{eq:erresm4}.
 \end{proof}
\begin{theorem}[Error bound]
Suppose $(\bu,p,\bJ,\phi)$  and $(\bu_h^n,p_h^n,\bJ_h^n,\phi_h^n)$ be solutions of \eqref{eq:ef1}-\eqref{eq:ef4} and \eqref{eq:AH1}-\eqref{eq:AH4}. If 
\begin{equation*}
\bu \in \bX \cap \bH^{1+\gamma}(\Omega), \, p \in Q \cap H^{\gamma}(\Omega), \, \bJ \in \bD \cap \bH^{k}(\Omega) \text{ with } \Divergence \bJ \in \bH^{k}(\Omega), \, \text{ and } \phi \in S \cap H^{k}(\Omega),
\end{equation*}
then
\begin{align*}
\|(\bu-\bu_h^{n+1},\bJ-\bJ_h^{n+1})\|_1+\|p-p_h^{n+1}\|
   &\leq C h^{\min\{\gamma,k\}}(\|\bu\|_{1+\gamma}+\|\bJ\|_{k}+\|\Divergence \bJ\|_{k}+\|p\|_{\gamma}) \nonumber\\
            &+ \sqrt{\lambda^{n+1}\zeta}  \left(\sqrt{\left(\frac{1}{2 \rho_1} + \Lambda_1 \right)^{-1}+1}+\sqrt{{2{\gamma_1}}}\right)\dfrac{\|\bf F\|_*}{C_{\min}},
           \nonumber\\[0.2cm]
\|\phi-\phi_h^{n+1}\| &\leq C h^{\min\{\gamma,k\}}\left(\|\bu\|_{1+\gamma}+\|\bJ\|_{k}+\|\Divergence \bJ\|_{k}+\|p\|_{\gamma}+\|\phi\|_{k} \right) \\
& \quad + \sqrt{2\lambda^{n+1}\zeta \gamma_2 \kappa^{-1} }\dfrac{\|\bf F\|_*}{C_{\min}},\nonumber
 \end{align*}
where $\zeta$ is defined by \eqref{eq:zeta}.
    \end{theorem}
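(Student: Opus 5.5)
The plan is to split each error by the triangle inequality through the discrete solution $(\bu_h,p_h,\bJ_h,\phi_h)$ of \eqref{eq:w1}-\eqref{eq:w4}:
\[
\bu-\bu_h^{n+1}=(\bu-\bu_h)+(\bu_h-\bu_h^{n+1}),
\]
and similarly for $p$, $\bJ$ and $\phi$. The first pieces are the finite element discretization errors. Theorem \ref{exactbd} bounds them directly by $Ch^{\min\{\gamma,k\}}$ times the stated Sobolev norms; the $\phi$ estimate carries the extra $\|\phi\|_k$ term. The second pieces are the iteration errors, which Corollary \ref{Errbound} controls geometrically.

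For the first inequality I would write
\[
\|(\bu-\bu_h^{n+1},\bJ-\bJ_h^{n+1})\|_1+\|p-p_h^{n+1}\|\le \|(\bu-\bu_h,\bJ-\bJ_h)\|_1+\|p-p_h\| + \|(\be_h^{n+1},\bZ_h^{n+1})\|_1+\|\delta_h^{n+1}\|.
\]
The iteration part splits further using
\[
\|(\be_h^{n+1},\bZ_h^{n+1})\|_1 =\big(\|\nabla \be_h^{n+1}\|^2+\|\bZ_h^{n+1}\|_{\Hdiv}^2\big)^{1/2}.
\]
Inserting \eqref{eq:erresma1} and \eqref{eq:erresm3} under the square root gives
\[
\sqrt{\zeta\lambda^{n+1}}\,\sqrt{\left(\tfrac{1}{2\rho_1}+\Lambda_1\right)^{-1}+1}\;\frac{\|\bf F\|_*}{C_{\min}}.
\]
Using \eqref{eq:erresm2} for $\|\delta_h^{n+1}\|$ gives $\sqrt{2\zeta\lambda^{n+1}\gamma_1}\,\|{\bf F}\|_*/C_{\min}$. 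Adding the two yields exactly the stated factor $\sqrt{\lambda^{n+1}\zeta}\big(\sqrt{(\frac{1}{2\rho_1}+\Lambda_1)^{-1}+1}+\sqrt{2\gamma_1}\big)$.

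For $\phi$, the same triangle inequality, Theorem \ref{exactbd}, and the square root of \eqref{eq:erresm4} give the second estimate directly.

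There is essentially no obstacle, since everything is assembled from earlier results. The only point to check is that the constants in Corollary \ref{Errbound} are used consistently: combining the $\bu$ and $\bJ$ contributions under one square root rather than summing them separately is what produces the $+1$ inside the first radical. I also need Theorem \ref{exactbd} to apply, which holds because its hypothesis \eqref{c1} is implicit in the setting of Theorem \ref{boundedness}, where $\Lambda_0>0$ and the discrete solution exists uniquely.
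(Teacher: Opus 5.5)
Your proposal is correct and is the same argument as the paper's: a triangle inequality through $(\bu_h,p_h,\bJ_h,\phi_h)$, then Theorem~\ref{exactbd} for the discretization error and Corollary~\ref{Errbound} for the iteration error, with the $\bu$ and $\bJ$ iteration bounds combined under one square root to give the stated constant. One caveat: \eqref{c1} is a tacit standing assumption rather than a consequence of $\Lambda_0>0$, and the paper relies on it as well, both through \eqref{eq:uJfembound} and through $\sigma$ in $\zeta$.
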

    \begin{proof}
      Using the triangle inequality yields 
      \begin{eqnarray}
   \lefteqn{ \| (\bu-\bu_h^{n+1},\bJ-\bJ_h^{n+1}) \|_1+\|p-p_h^{n+1} \| } \nonumber \\
            &\leq &\|(\bu-\bu_h,\bJ-\bJ_h)\|_1+\|p-p_h \|+  \|(\bu_h-\bu_h^{n+1},\bJ_h-\bJ_h^{n+1})\|_1+\|p_h-p_h^{n+1} \|, \nonumber 
               \end{eqnarray}
        and 
     \begin{equation*}
           \|\phi-\phi_h^{n+1}\| \leq \|\phi-\phi_h\|+\|\phi_h-\phi_h^{n+1}\|.
     \end{equation*}           
The stated result follows from the combination of Theorem \ref{exactbd} and Corollary \ref{Errbound}.    
    \end{proof}

\section{Numerical Experiments}
\label{sec:Numerics}
To test the effectiveness of our novel Algorithm \ref{algo1} and to validate the theoretical results, we present several numerical examples in this section. The first example is a convergence test using a smooth exact solution. The second example, which features singular solutions, aims to assess the performance and convergence rates of the numerical solutions in a non-convex L-shaped domain. Then we test our Algorithm on two-- and three--dimensional lid-driven cavity flows. For all examples, the FreeFem++ finite element library \cite{hec12} is employed for implementation. $(\bP^d_2,P_1)$ Taylor-Hood elements are used for velocity and pressure, and first-order Raviart-Thomas elements $\mathrm{RT}_1$ and discontinuous finite element spaces $P_{1,dc}$ are applied for the current density and electric potential in all of the 2D tests, respectively. The 3D lid-driven cavity problem was approximated via ($\mathrm{RT}_0$,$P_0$) pair. The UMFPACK routine is used to solve the linear systems arising from the discrete algebraic equations in 2D examples. The iterative tolerance is set to $\epsilon=10^{-{ 6}} $ for all the problems with the stopping criteria given by
\begin{equation}
       \frac{\|p_h^n-p_h^{n-1}\|}{\|p_h^n\|} \le \epsilon.
\end{equation}
In all computations, we set $\rho_1=\rho_2=\rho$ and  $\gamma_1=\gamma_2=\gamma$. For the 2D lid-driven cavity test, we used $\gamma=100$, and for the 3D lid-driven cavity problem, we used $\gamma=1$ to keep the condition numbers reasonable. As dictated by the inequality \eqref{eq:Lambda1}, $\rho$ must be chosen small enough, and it should decrease as $\kappa$ and $\Reynolds$ increase. On the other hand, an extremely small value of $\rho$ will deteriorate the conditioning of the associated linear system and should be avoided. In our tests for the lid-driven cavity problem, we empirically tuned the value of $\rho$ by testing multiple values on a coarser mesh and opted for the one that gave the fastest convergence.
\subsection{Convergence for the problem with smooth solution}
\label{subsec:Convergence}
We assess the accuracy and convergence behavior by first constructing a manufactured solution on the unit square as:
$\bB = (0, 0, 1)^T$, $\Reynolds =1$, $\kappa=1$, and choose the right-hand side functions $\bff$
and $\bfg$ such that the exact solutions are
\begin{align*}
    u_1=4 x^2(x-1)^2y(y-1)(2y-1), && 
    u_2=-4 y^2 (y-1)^2 x(x-1)(2x-1), \nonumber\\
    p=(2x-1)(2y-1),\nonumber &&\phi=x-1/2,\\
    J_1=4 \sin(\pi x)\cos(\pi y),&&J_2=-4 \sin(\pi y)\cos(\pi x).
\end{align*}
The corresponding errors and convergence rates are presented in Table \ref{tab:iahprob1}. We can see that the expected convergence rates are achieved for all quantities and the number of iterations remains nearly uniform. 
\begin{table}[h]
    \centering
    \scalebox{0.9}{
\begin{tabular}{|c | c | c | c |c |c |c |c| c |c|c|c|} 
\hline
h&	CPU& iter &	$\|\nabla \be_{\bu,h}^{n+1}\|$ &rate& $\|e_{p,h}^{n+1}\|$ & rate& $\|\bZ_h\|_{\Hdiv}$&rate& $\|e_{\phi,h} \|$&rate&	$\|\nabla \cdot\bJ_h \|$ \\ [0.5ex] 
\hline
               0.177 & 0.339 & 8 & 0.0097 & - & 0.03 & - & 0.061 & - & 0.0008 & - & 4.74E-08 \\ \hline
        0.088 & 0.621 & 4 & 0.002 & 2.23 & 0.0067 & 2.16 & 0.0138 & 2.13 & 6.48E-05 & 3.56 & 2.11E-08 \\ \hline
        0.044 & 2.248 & 3 & 0.0005 & 2.04 & 0.0016 & 2.03 & 0.0033 & 2.05 & 8.07E-06 & 3.00 & 2.76E-09 \\ \hline
        0.022 & 11.846 & 3 & 0.0001 & 1.95 & 0.0004 & 2.02 & 0.0008 & 2.00 & 9.93E-07 & 3.02 & 1.09E-09 \\
        \hline
    \end{tabular} 
    }
    \caption{Errors, convergence rates, CPU times for $\rho=5$ with Algorithm \ref{algo1} for smooth manufactured solution test \ref{subsec:Convergence}}
    \label{tab:iahprob1}
\end{table}
\subsection{Convergence test for the problem with singular solutions}
\label{subsec:ConvergenceLshaped}
Next, we investigate the problem with singular solutions from \cite{L88} to verify that the proposed method can effectively capture these singularities. The Inductionless MHD equations are considered in a non-convex L-shaped domain, \(\Omega = \left( {-0.5, 0.5} \right)^2 \setminus \left( \left[ 0, 0.5 \right) \times \left( -0.5, 0 \right] \right)\). The exact solutions exhibit prominent singularities at the coordinate origin, as it lies at the re-entrant corner of the domain. The system parameters are chosen as $\bB = (0, 0, 1)$ and $\Reynolds = \kappa = 1$, while the source terms and boundary conditions are tailored to match the analytical solution. Using polar coordinates \((r, \theta)\), with \(\omega = \dfrac{3\pi}{2}\), the solution elements for the Navier-Stokes problem, including velocity \(\bu\) and pressure \(p\), are given as
\begin{align*}
  u_1(r,\theta)&=r^\lambda((1+\lambda)\sin(\theta) \Psi(\theta)+\cos(\theta)\Psi'(\theta)),\nonumber\\
    u_2(r,\theta)&=r^\lambda(-(1+\lambda)\cos(\theta) \Psi(\theta)+\sin(\theta)\Psi'(\theta)),\nonumber\\
    p(r,\theta)&=-r^{\lambda-1}((1+\lambda)^2\Psi'(\theta)+\Psi'''(\theta))/{(1-\lambda)},
\end{align*}
with
\begin{equation*}        \Psi(\theta)= \sin((1+\lambda)\theta)\dfrac{\cos(\lambda\omega)}{1+\lambda}-\cos((1+\lambda)\theta)-\sin((1-\lambda)\theta)\dfrac{\cos(\lambda\omega)}{1-\lambda}+\cos((1-\lambda)\theta).
\end{equation*}
The value of the parameter $\lambda$ represents the smallest positive solution of $\sin(\lambda\omega)+\lambda \sin(\omega) = 0$. The current density $\bJ$ and the electric potential $\phi$ are defined as
\begin{equation}
\bJ(r,\theta)=\nabla(r^{2/3}\sin(2\theta/3)),\hspace{1cm}\phi(r,\theta)=0.
\end{equation}
In the case of electric potential, the boundary condition is chosen as $\phi=0 $.

The true solution has a low regularity with $(\bu,p) \in \bH^{1+\lambda}(\Omega)\times H^{\lambda}(\Omega)$, see, e.g., \cite{ubcsingular}. Tables  \ref{table:3a} and \ref{table:4} present the CPU time and errors for numerical solutions of the stationary IMHD. The results are consistent with the results in \cite{zhang}. The plots of the velocity field and the current density are given in Figures \ref{fig:LshapedDomainFigsU}-\ref{fig:LshapedDomainFigsJ}.
 We also used this test to compare Algorithm \ref{algo1} with the AH method of \cite{xia_arrow}, cf. Table \ref{table:6}.

\begin{table}[h!]
\centering
\scalebox{0.9}{
\begin{tabular}{|c|c|c|c|c|c|c|c|c|c|c |c|c|c|} 
\hline
$h$&	CPU &iter&	  $\|\nabla \be_{\bu,h}\|$&	rate&	$\|e_{p,h}\|$ &	rate&	 $\|e_{\phi,h} \|$&	rate&	$\|\bZ_h\|_{\Hdiv}$&	rate&	$\|\nabla\cdot \bZ_h\|$\\ [0.5ex]  \hline
0.25&	3.381&	202&	1.817&	-&	5.674&	-&	0.006&	-&	0.079&	-& 0.0015	\\ \hline
0.125&	6.048&	116&	1.108&	0.714&	1.94&	1.548&	0.004&	0.763&	0.043&	0.872&	0.0011\\ \hline
0.063&	14.053&	94&	0.748&	0.568&	1.264&	0.619&	0.002&	0.884&	0.029&	0.595&	0.0006\\ \hline
0.031&	47.05&	78&	0.564&	0.408&	0.915&	0.465&	0.001&	0.705&	0.016&	0.822&	0.0003 \\ \hline
0.016&	152.925&	56&	0.548&	0.039&	0.832&	0.137&	0.001&	0.35&	0.011&	0.575&	0.0001
	\\ \hline
\end{tabular}}
\caption{ \small Errors, convergence rates, CPU times for $\rho=0.88$
with Algorithm \ref{algo1} for test \ref{subsec:ConvergenceLshaped}}
\label{table:3a}
\end{table}
As can be seen from Table \ref{table:6} our scheme converges faster as $\rho$ increases, whereas Algorithm AH \cite{xia_arrow} fails to achieve stability or convergence for higher values of $\rho$. 
\begin{table}[h!]
\centering
\scalebox{0.9}{
\begin{tabular}{|c | c | c | c |c |c |c |c| c |c|c|c|} 
\hline
$h$&	CPU &	iter&	 $\|\nabla \be_{\bu,h}\|$&	rate&	$\|e_{p,h}\|$ &	rate&	 $\|e_{\phi,h} \|$&	rate&	$\|\bZ_h\|_{\Hdiv}$&	rate&	$\|\nabla \cdot \bZ_h\|$\\  [0.5ex] \hline  
0.25&	2.25&	170&	1.82&	-&	6.6&	-&	0.007&	-&	0.08&	-&	0.0011 \\ \hline
0.125&	4.01&	93&	1.11&	0.71&	2.11&	1.65&	0.004&	0.71&	0.04&	0.86&	0.0009 \\ \hline
0.063&	11.06&	76&	0.75&	0.57&	1.32&	0.68&	0.002&	0.9&	0.03&	0.59&	0.0005 \\ \hline
0.031&	37.02&	64&	0.56&	0.42&	0.93&	0.5&	0.001&	0.75&	0.02&	0.82&	0.0002 \\ \hline
0.016&	169.33&	50&	0.55&	0.04&	0.84&	0.15&	0.001&	0.41&	0.01&	0.57&	0.0001
\\ \hline
\end{tabular}}
\caption{ Errors, convergence rates, CPU times for $\rho=1.25$ 
with Algorithm \ref{algo1} for the $L$-shaped domain test \ref{subsec:ConvergenceLshaped}}
\label{table:4}
\end{table}
\begin{table}[h!]
\centering
\scalebox{0.9}{
\begin{tabular}{|c|c|c|c|c|c|}
    \hline
    \multicolumn{3}{|c|}{$\rho=0.88$}  & \multicolumn{3}{|c|}{$\rho=100$}  \\
    \hline
    $\gamma$ & AH scheme of \cite{xia_arrow} & Algorithm \ref{algo1} &$\gamma$ & AH scheme of \cite{xia_arrow} & Algorithm \ref{algo1} \\
    \hline &&&&&\\ 
    $\dfrac{1}{0.88} $& 64 & 56 &0.01& unstable &19\\[0.2cm]
    \hline
     &&&&&\\ 
      $\dfrac{10}{0.88}$ & 369 & 422&0.1& unstable &122\\ [0.2cm]
    \hline  &&&&&\\ 
      $\dfrac{100}{0.88}$ & 2137 & 2498 &1& unstable &594\\ [0.2cm]
    \hline
\end{tabular}}
\caption{Iterations counts for convergence for the $L$-shaped domain test \ref{subsec:ConvergenceLshaped}}
\label{table:6}
\end{table}
\begin{figure}[H]
\centering
\begin{subfigure}{0.3\textwidth}
    \includegraphics[width=\textwidth]{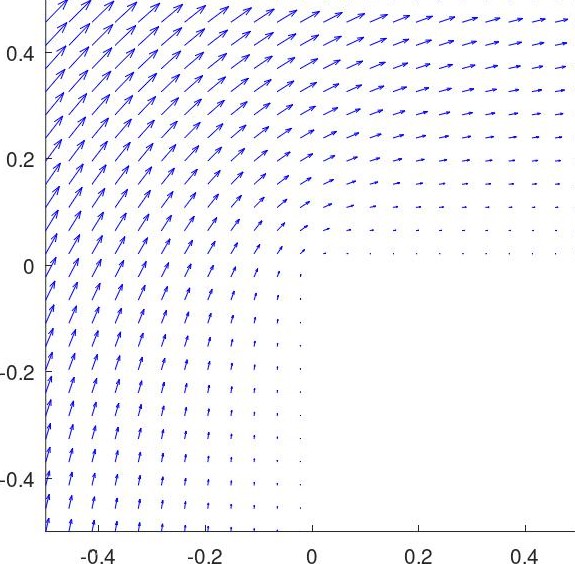}
    \caption{Velocity field $\bu$}
    \label{fig:u_Lshaped}
\end{subfigure}
\begin{subfigure}{0.3\textwidth}
    \includegraphics[width=\textwidth]{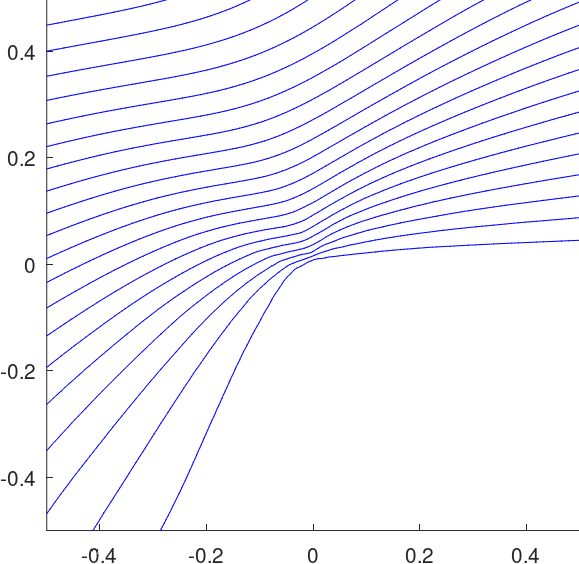}
    \caption{Contours of $u_1$}
\end{subfigure}
\begin{subfigure}{0.3\textwidth}
    \includegraphics[width=\textwidth]{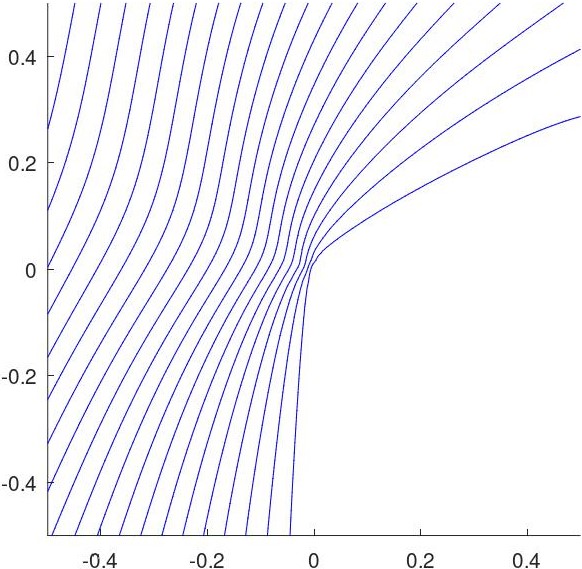}
    \caption{Contours of $u_2$}
\end{subfigure}
\caption{Plots of the velocity field for the $L$-shaped domain test \ref{subsec:ConvergenceLshaped}}
\label{fig:LshapedDomainFigsU}
\end{figure}
\begin{figure}[H]
\centering
\begin{subfigure}{0.3\textwidth}
    \includegraphics[width=\textwidth]{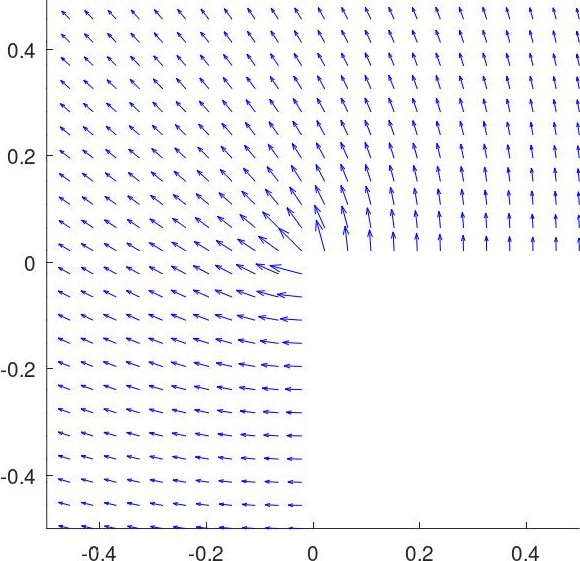}
    \caption{Current density $\bJ$}
    \label{fig:J_Lshaped}
\end{subfigure}
\begin{subfigure}{0.3\textwidth}
    \includegraphics[width=\textwidth]{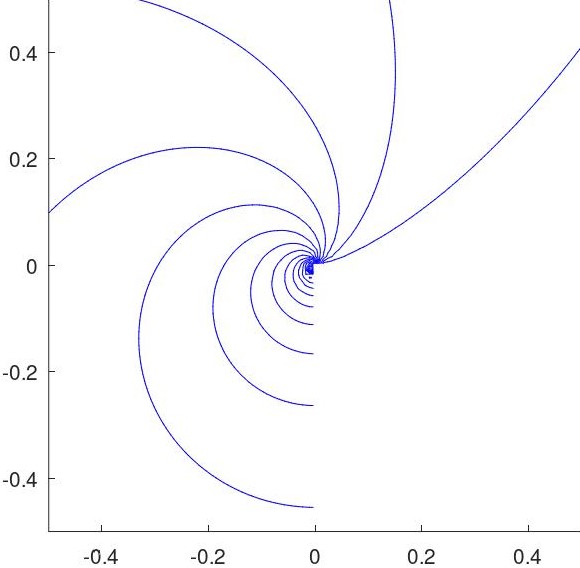}
    \caption{ Contours of $J_1$}
\end{subfigure}
\begin{subfigure}{0.3\textwidth}
    \includegraphics[width=\textwidth]{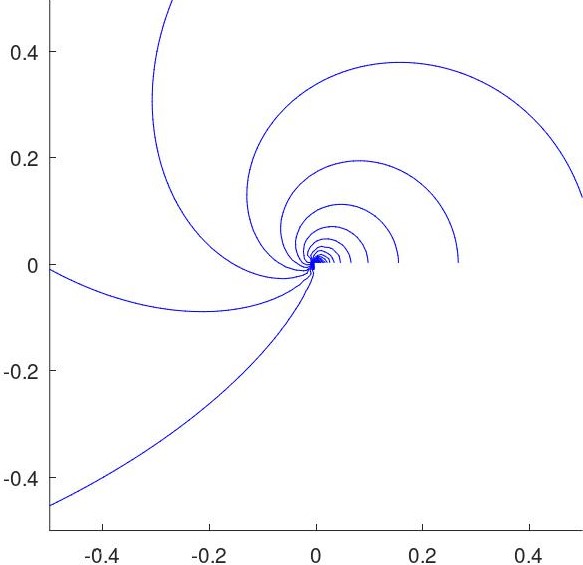}
    \caption{Contours of $J_2$}
\end{subfigure}        
\caption{Plots of current density for the $L$-shaped domain test \ref{subsec:ConvergenceLshaped}}
\label{fig:LshapedDomainFigsJ}
\end{figure}
\subsection{Two-dimensional lid-driven cavity flow}
\label{subsec:2DCavity}
We next test our numerical scheme using the standard 2D lid-driven cavity flow. Within the domain $\Omega = [0, 1]^2$, the top surface is driven at a constant unit speed in the positive $x$-direction, and the remaining boundaries are fixed with no-slip conditions. The magnetic field is taken as $\bB =(0,0,1)$, and the boundary condition for the electric potential $\phi=0$ is used. We run simulations at different Reynolds numbers, testing meshes with varying resolutions. The results are reported for \(192 \times 192\) uniform mesh. All simulations are initiated from zero initial conditions. 

The streamlines for different Reynolds numbers, with \(\kappa = 16\) for all experiments, are given in Figure \ref{fig:2DCavity}. These results are consistent with those reported in \cite{mervegurbuzphd} and \cite{ENUMATHcananhoca}. The iteration count for convergence is given in Table \ref{tab:2DCavity_NbOfIts}.
\begin{table}[H]
    \centering
    \begin{tabular}{|c|c|c|} \hline 
        $\Reynolds$ & Hartmann number  &number of iterations \\ \hline 
        25 & 20 & 50\\ \hline 
         625 & 100 & 109 \\ \hline 
         1406 & 150 & 205 \\ \hline 
        2500 & 200 & 338\\ \hline 
        4761 & 276 & 599 \\ \hline 
        10000 & 400 & 1750 \\ \hline
    \end{tabular}
    \caption{Iterations counts for convergence for 2D lid-driven cavity problem with constant magnetic field \ref{subsec:2DCavity} ($\rho=100$ and $\gamma=100$).}
    \label{tab:2DCavity_NbOfIts}
\end{table}
 \begin{figure}[H]
\centering
 \begin{subfigure}{0.32\textwidth}
\includegraphics[width=\textwidth]{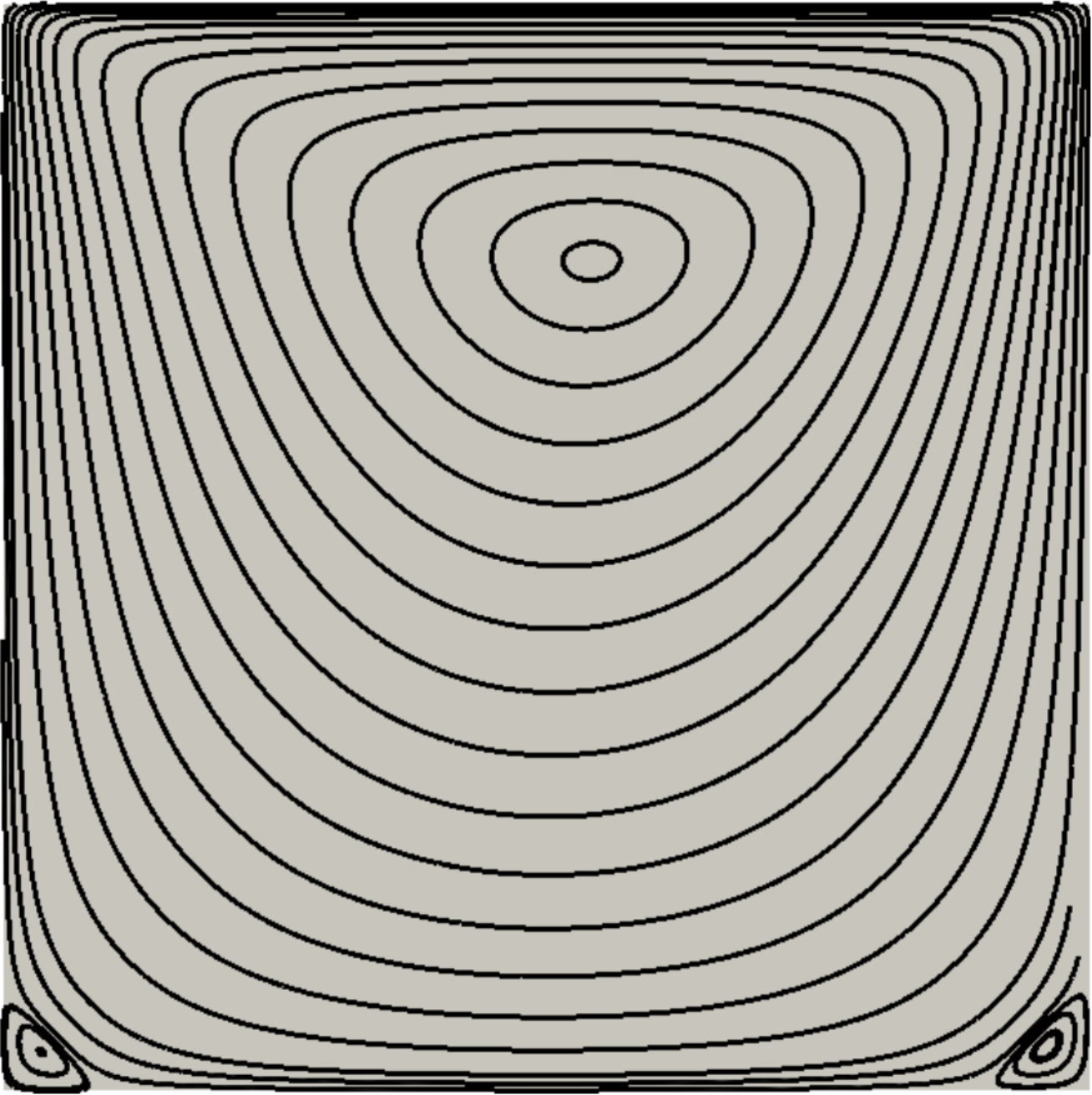}
    \caption{Re=25}
\end{subfigure}
\hfill
\begin{subfigure}{0.32\textwidth}
    \includegraphics[width=\textwidth]{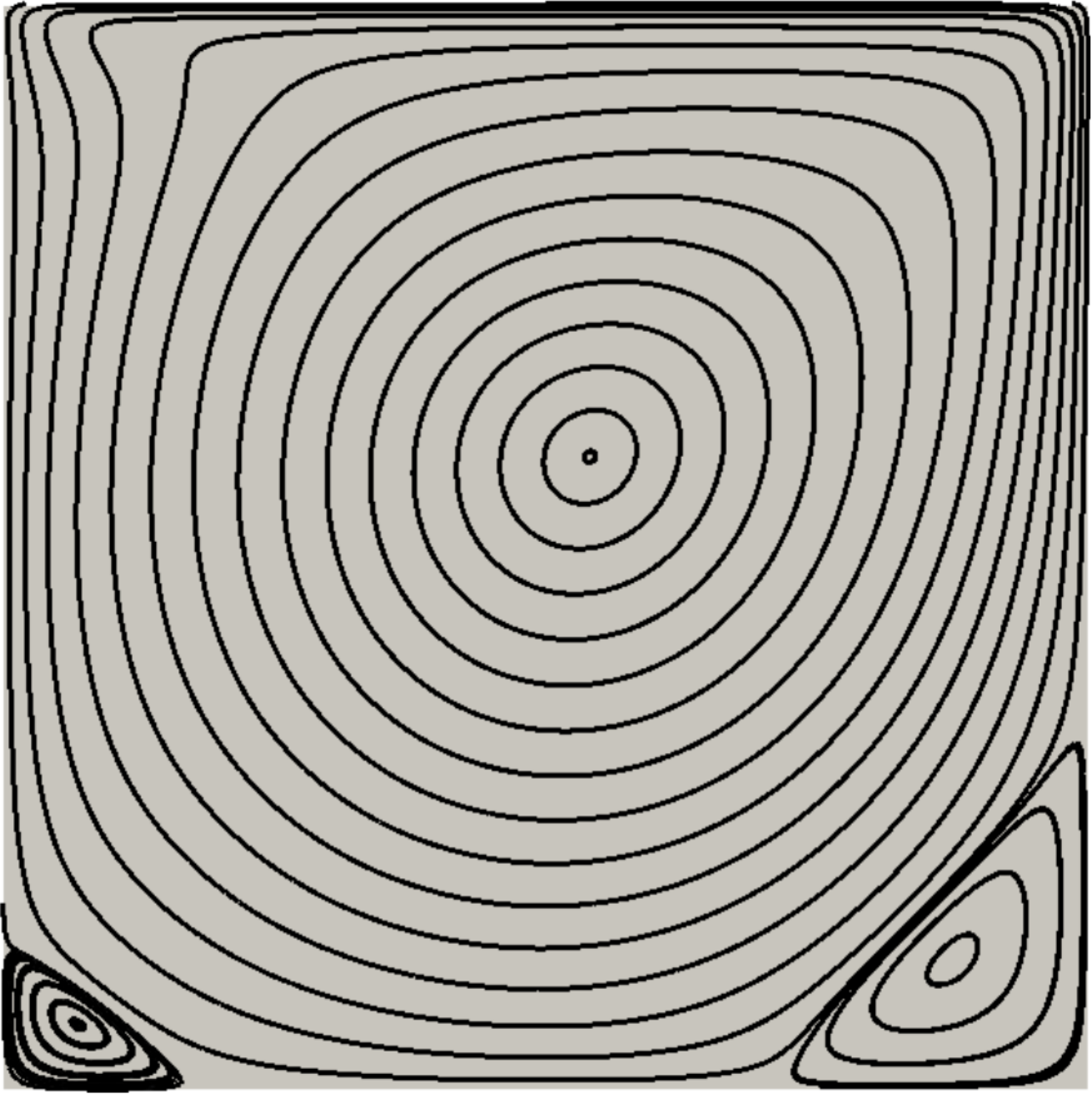}
    \caption{Re=625}
\end{subfigure}
\hfill
\begin{subfigure}{0.32\textwidth}
    \includegraphics[width=\textwidth]{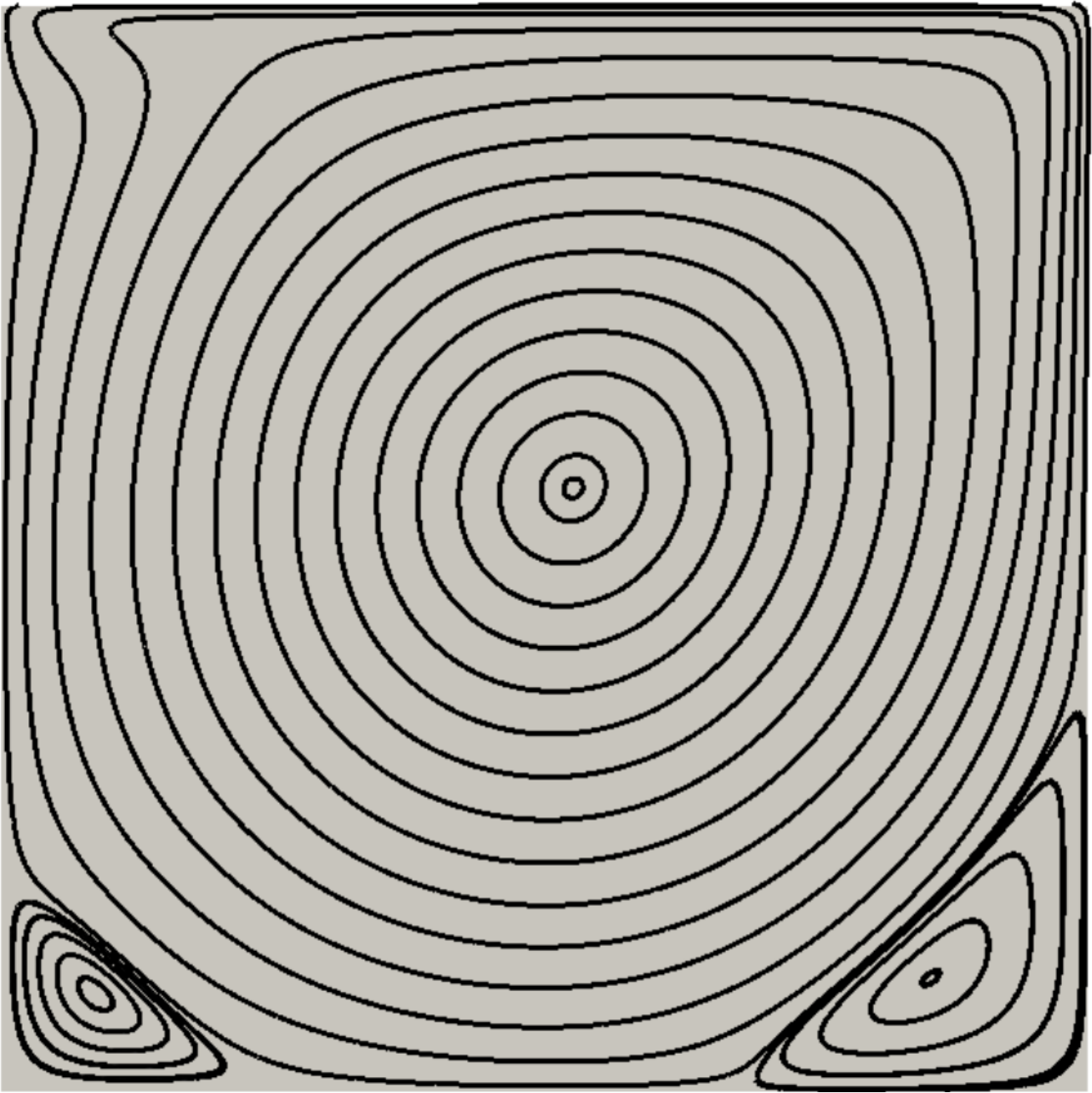}
    \caption{Re=1406}
\end{subfigure}
\\
\hfill
\begin{subfigure}{0.33\textwidth}
    \includegraphics[width=\textwidth]{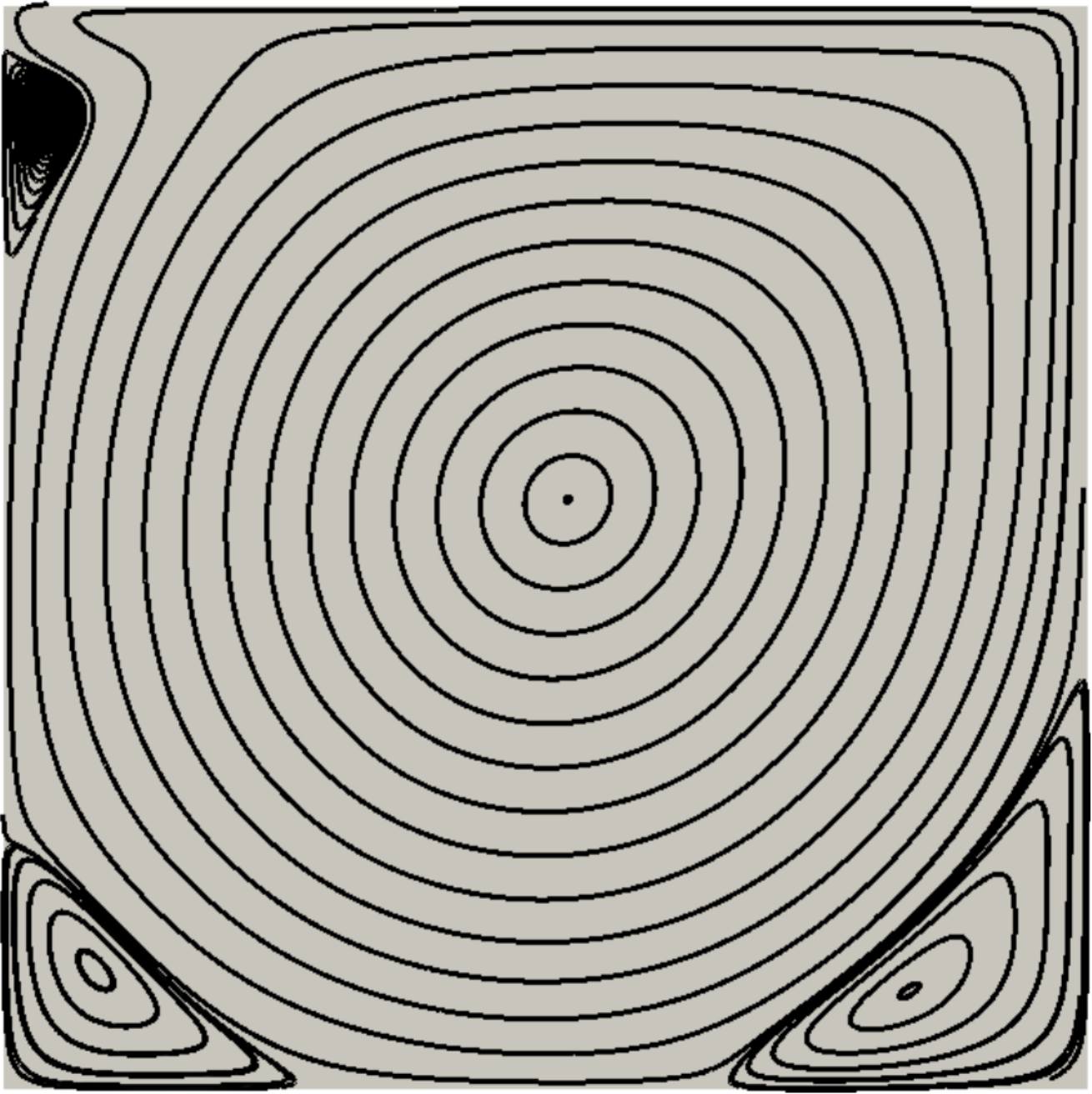}
    \caption{Re=2500}
\end{subfigure}
\hfill
\begin{subfigure}{0.33\textwidth}
    \includegraphics[width=\textwidth]{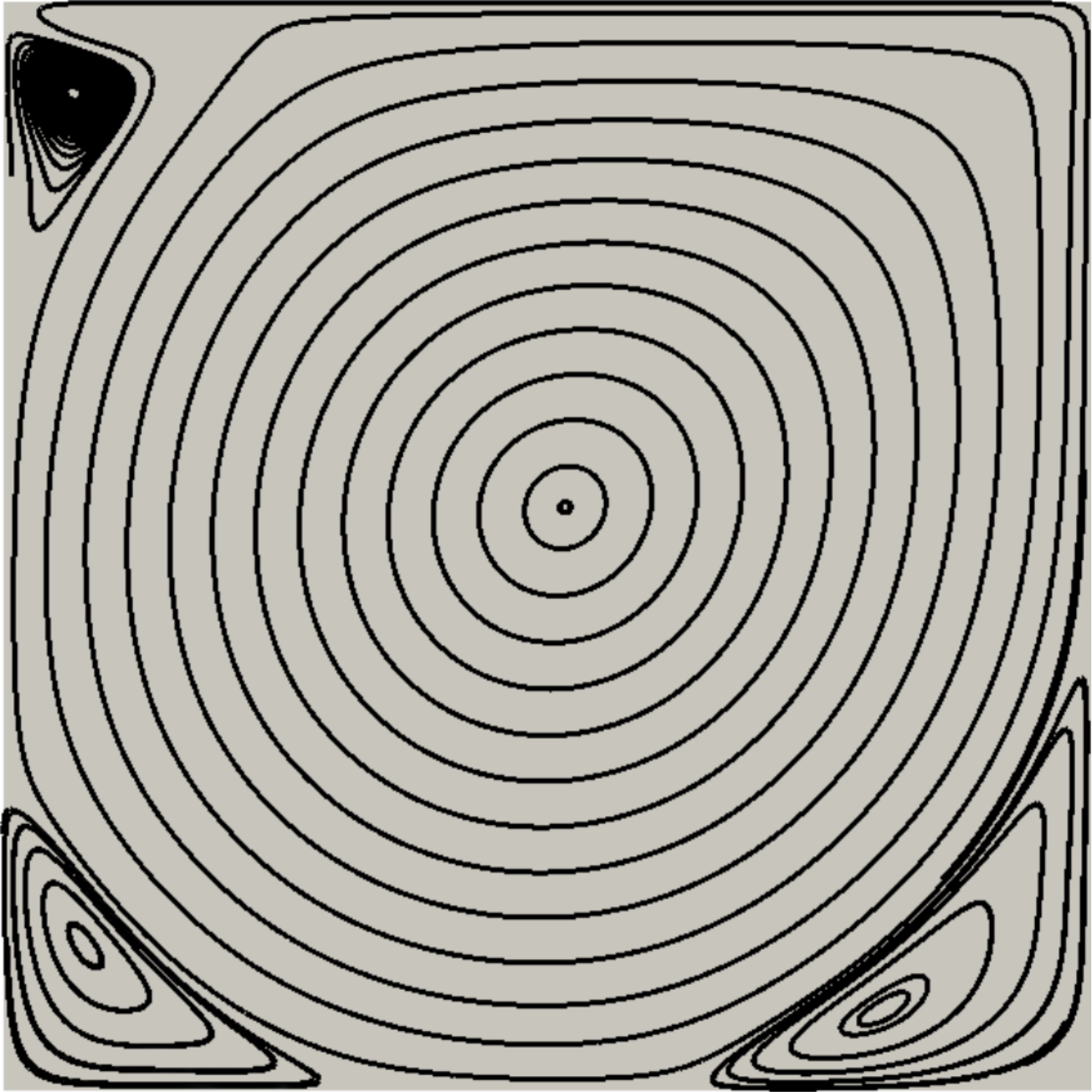}
    \caption{Re=4761}
\end{subfigure}
\hfill
\begin{subfigure}{0.32\textwidth}
    \includegraphics[width=\textwidth]{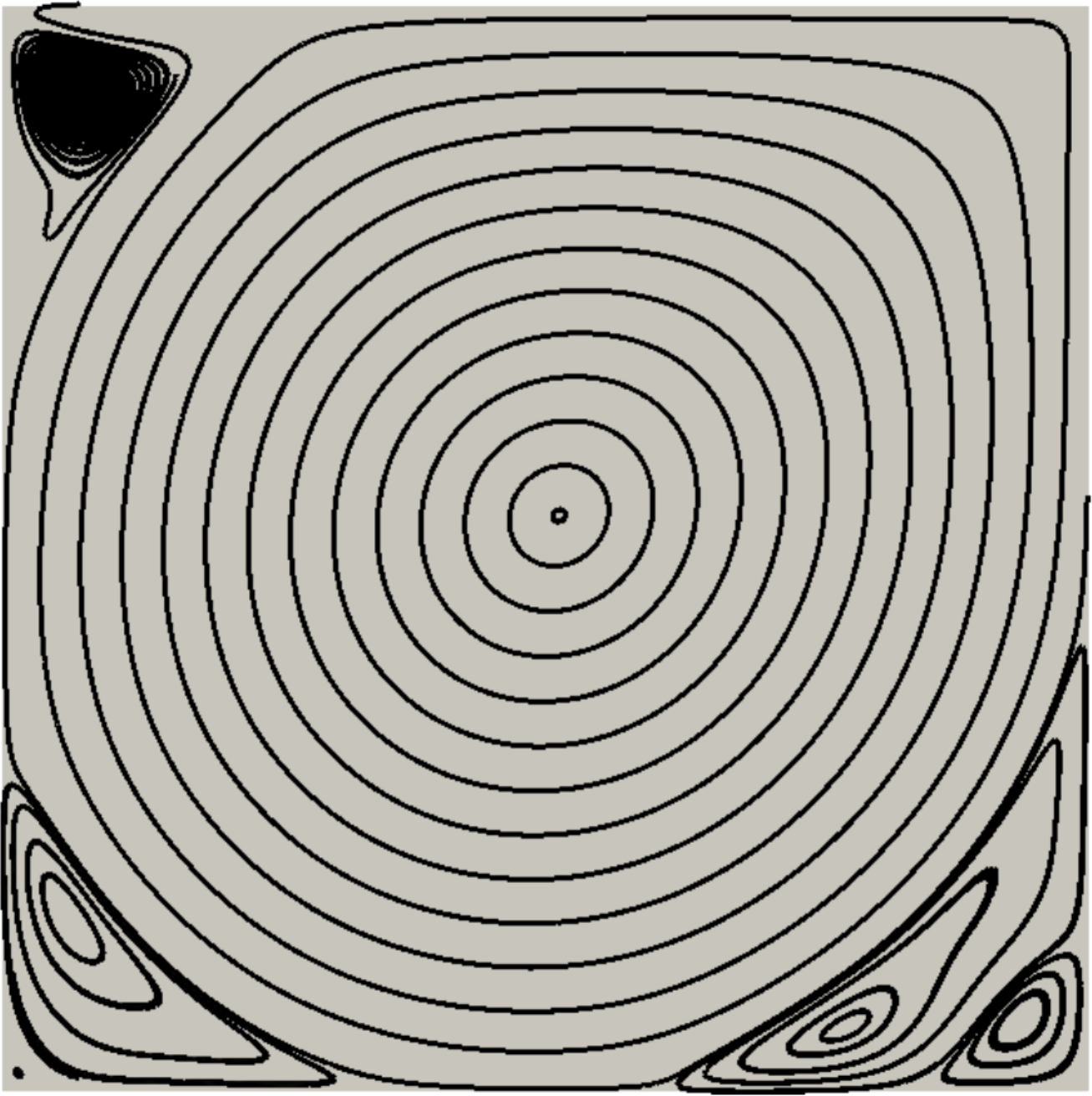}
    \caption{Re=10000}
\end{subfigure}
\caption{Streamlines for different Reynolds numbers for the 2D lid-driven cavity test \ref{subsec:2DCavity} with constant magnetic field ($\rho=100$ and $\gamma=100$).}
\label{fig:2DCavity}
\end{figure}
\subsection{Two-dimensional lid-driven cavity flow with non-constant $\bB$}
\label{subsec:2DCavityNonconstB}
Next, we tested our method on the lid-driven cavity flow problem with a non-constant magnetic field $$\bB = \left(0, 0, \frac{xy}{\sqrt{x^2 + y^2 + 1}}\right), $$
with everything else remaining the same as in the previous test case of Subsection \ref{subsec:2DCavity}. Tests were carried out for $\Reynolds = 100, 200, 400$ and $1000$. The velocity streamlines corresponding to $\Reynolds = 100, 400,$ and $1000$ are given in Figure \ref{fig:2DCavity_nonconstB}, which shows a lot more complex flow structure compared to the constant magnetic field case. 
 \begin{figure}[H]
\centering
 \begin{subfigure}{0.32\textwidth}
\includegraphics[width=\textwidth]{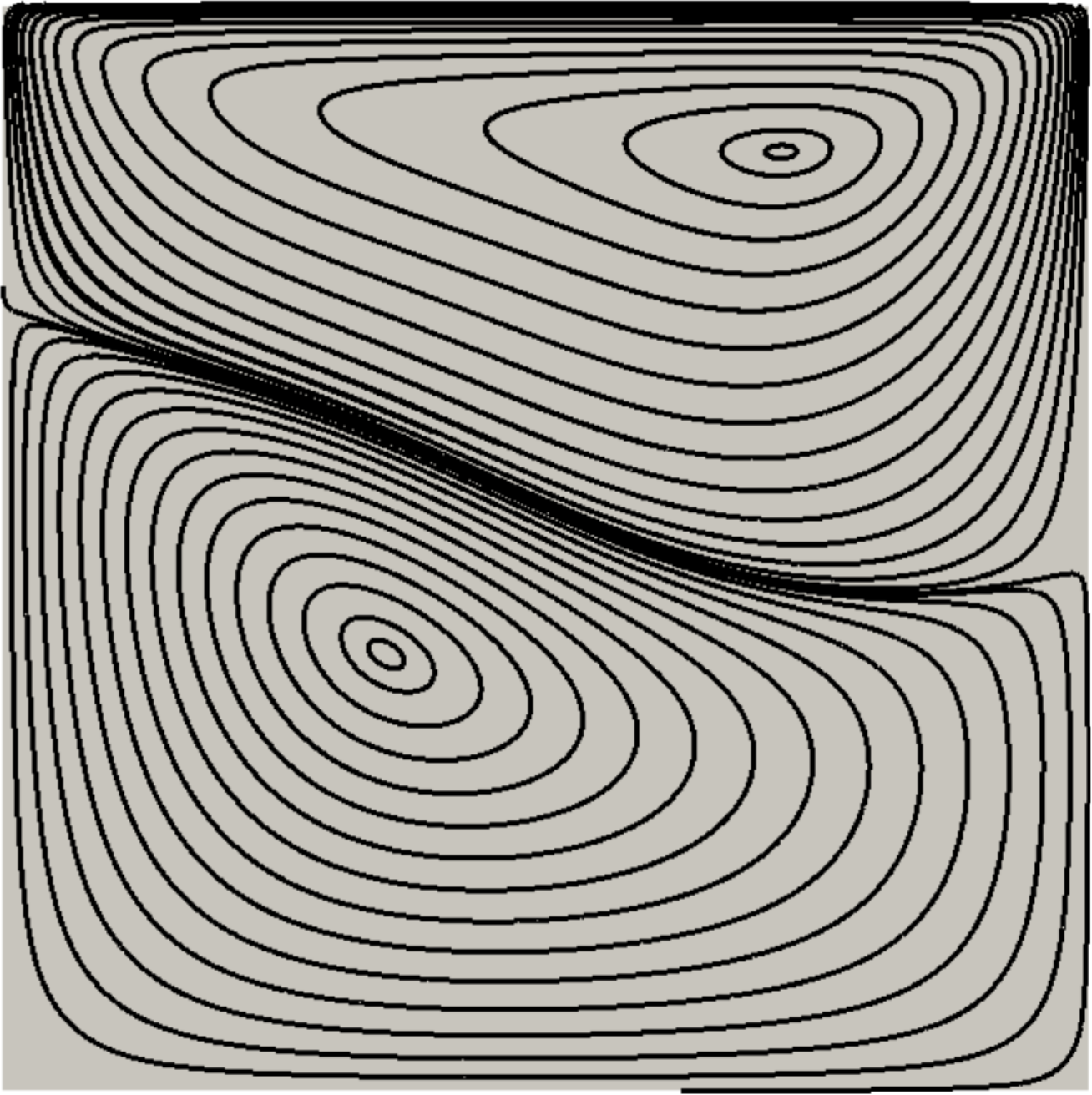}
    \caption{Re=100}
\end{subfigure}
\hfill
 \begin{subfigure}{0.32\textwidth}
\includegraphics[width=\textwidth]{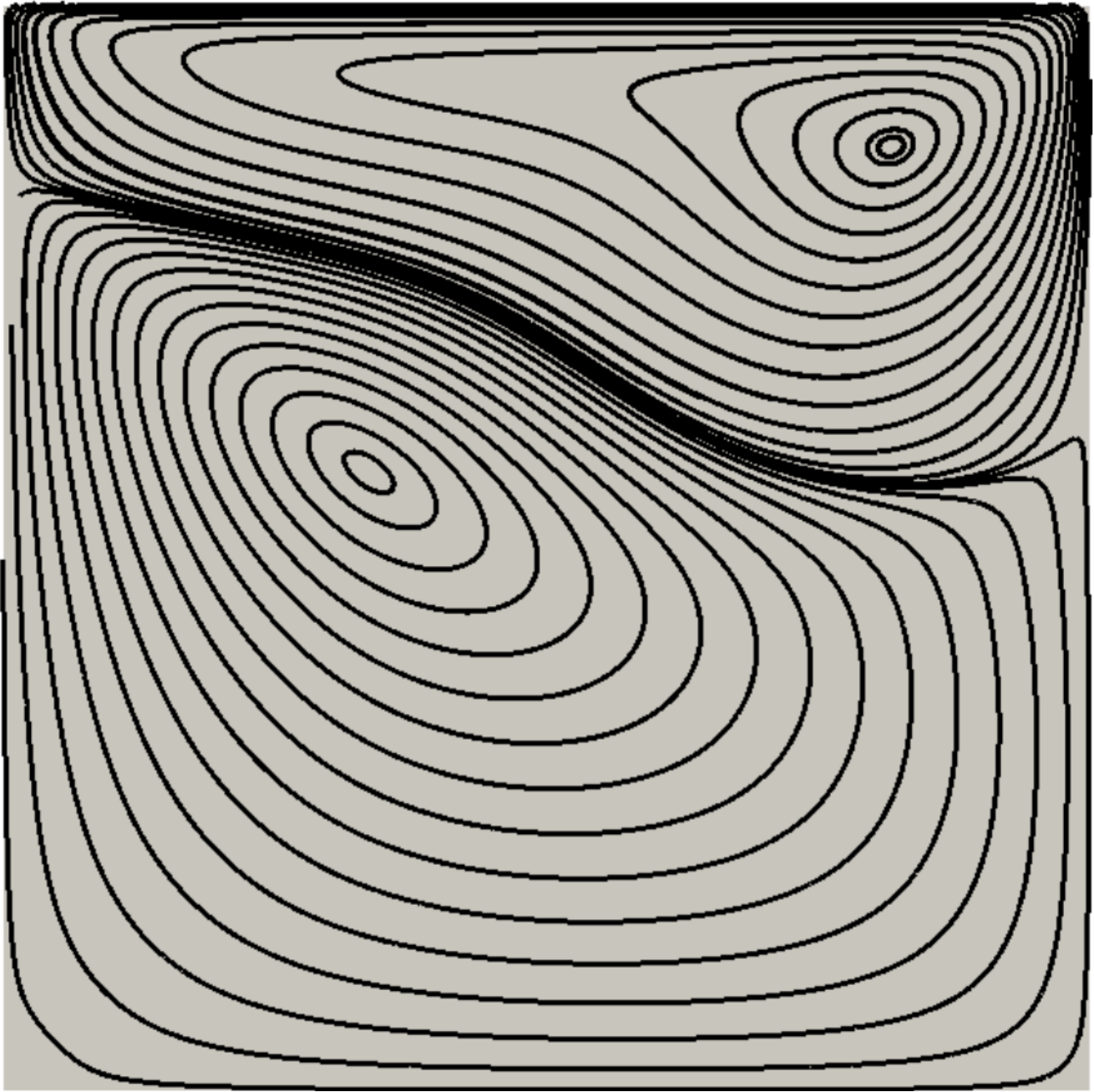}
    \caption{Re=400}
\end{subfigure}
\hfill
\begin{subfigure}{0.32\textwidth}
    \includegraphics[width=\textwidth]{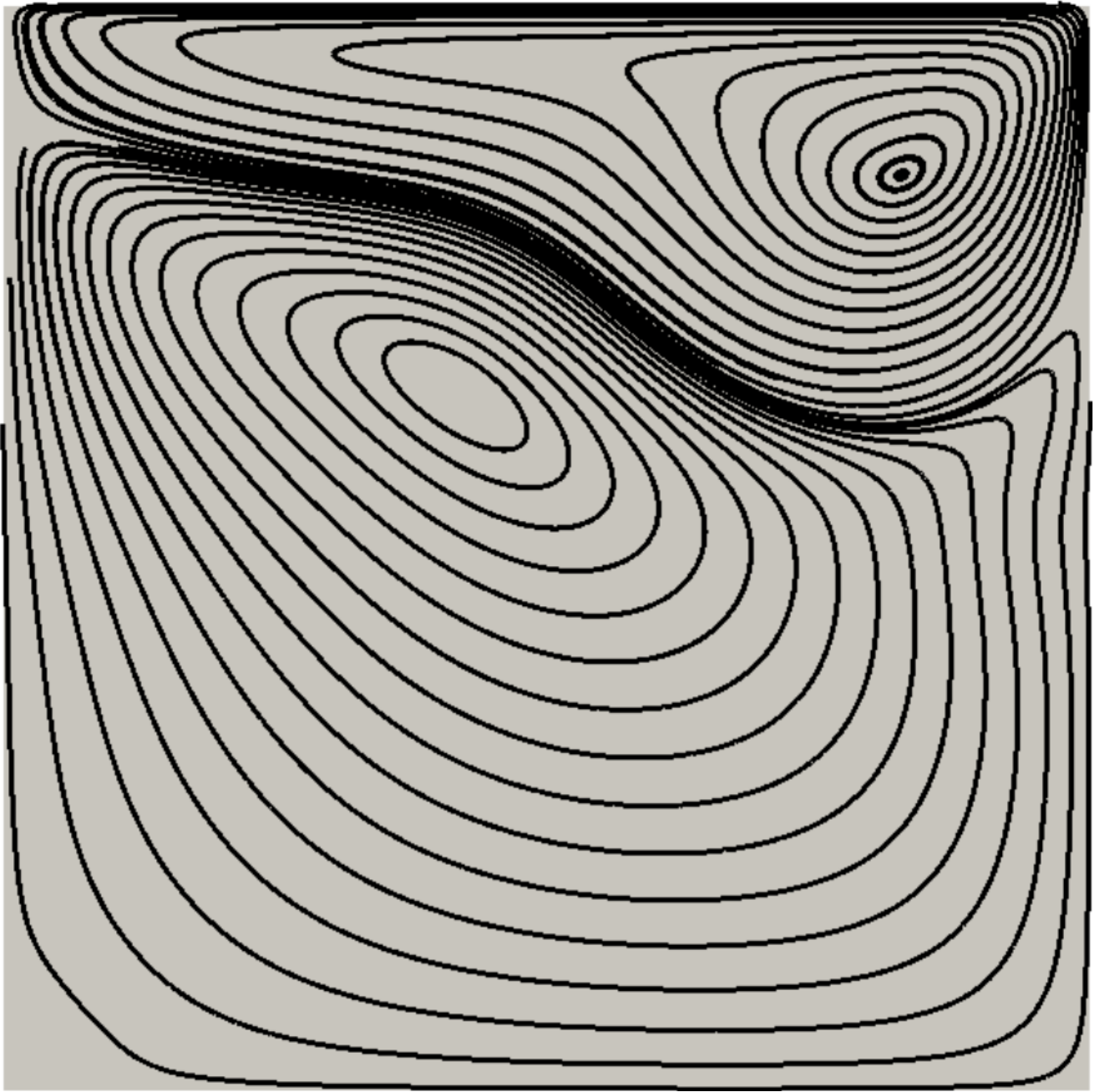}
    \caption{Re=1000}
\end{subfigure}
\caption{Streamlines for different $\Reynolds$ for a non-constant magnetic field}
\label{fig:2DCavity_nonconstB}
\end{figure}
Table \ref{tab:2DNonConstBCavity_NbOfIts} lists the number of iterations it took for convergence. One can notice that in this more complex case, more non-linear iterations are needed to reach the steady state for the same $\Reynolds$ number, as compared to the constant $\bB$ case.
\begin{table}[H]
    \centering
    \begin{tabular}{|c|c|c|} \hline 
        $\Reynolds$ & Hartmann number  &number of iterations \\ \hline 
        100 & 40 & 104\\ \hline 
        200 & 56.6 & 203 \\ \hline 
        400 & 80 & 413 \\ \hline 
        1000 & 126.5 & 1090 \\ \hline
    \end{tabular}
    \caption{Iterations counts for convergence for 2D lid-driven cavity problem with non-constant $\bB$ ($\rho=10$ and $\gamma=1$).}
\label{tab:2DNonConstBCavity_NbOfIts}
\end{table}
\subsection{Three-dimensional lid-driven cavity}
\label{subsec:3DCavity}
Our last experiment is with the 3D lid-driven cavity problem in a unit cube. In this instance, the top surface (which lies on the $z=1$ plane) is driven at a constant unit speed in the positive $x$-direction.  To avoid the reduced regularity induced by the edge singularities, we impose $\bu(x,y,1) = (g(x)g(y),0,0)^T$ using a regularized boundary data proposed in \cite{Frutos2016} with
\begin{align*}
g(\xi) & = 
\begin{cases} 
      1 - \dfrac{1}{4}\left(1-\cos \left(\dfrac{\varepsilon-\xi}{\varepsilon}\pi \right) \right)^2, & \text{ if }0 \le \xi \leq \varepsilon, \\
      1, & \text{ if } \varepsilon < \xi < 1-\varepsilon, \\
      1 - \dfrac{1}{4}\left(1-\cos \left(\dfrac{\xi-(1-\varepsilon)}{\varepsilon}\pi \right) \right)^2, & \text{ if } 1-\varepsilon \le \xi \leq 1,
   \end{cases}
\end{align*}
for $\varepsilon=0.1$. We note that this regularization is different from the commonly used ones for this problem, such as in \cite{Zhang2022}. We report the simulations run on a uniform mesh of size $25 \times 25 \times 25$ with a total of $700,529$ dof. The velocity and current density fields were solved using FGMRES preconditioned by a component-wise algebraic multigrid (GAMG) FieldSplit and an overlapping additive Schwarz method (ASM) with local MUMPS, respectively. The pressure and electric potential fields utilized GMRES with Jacobi-based preconditioners. For $\kappa=1$ runs we used $\rho=100$ and for $\kappa=10$ we used $\rho=10$. These values of $\rho$ gave the fastest convergence rates.

The velocity streamlines on the $y=0.5$ plane are given in Figures \ref{fig:3DCavity_kappa=1} and \ref{fig:3DCavity_kappa=10}, which reveal more and more complex flow structures as $\Reynolds$ or $\kappa$ increase.
 \begin{figure}[H]
\centering
 \begin{subfigure}{0.32\textwidth}
\includegraphics[width=\textwidth]{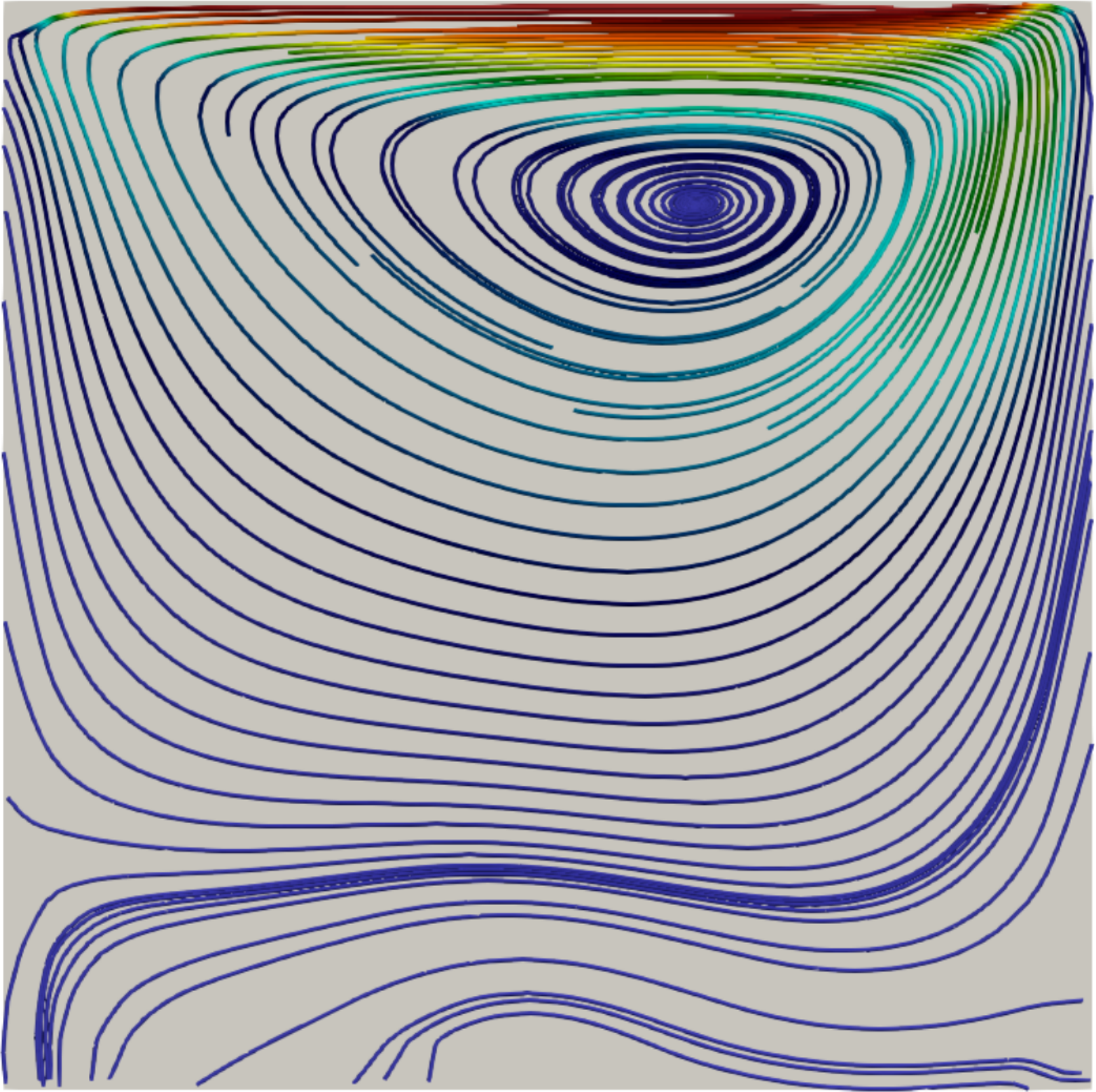}
    \caption{Re=100}
\end{subfigure}
\hfill
\begin{subfigure}{0.32\textwidth}
    \includegraphics[width=\textwidth]{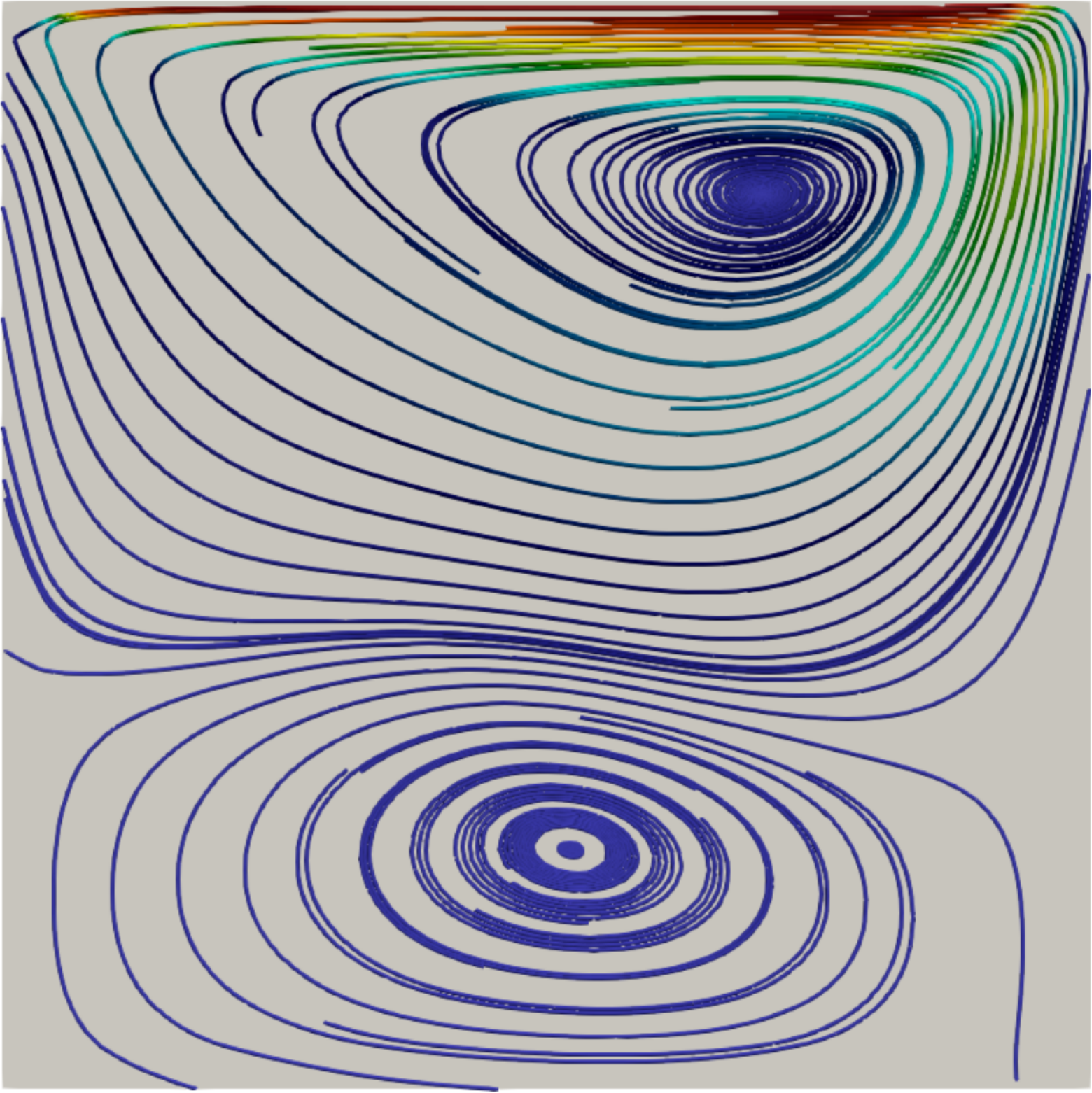}
    \caption{Re=200}
\end{subfigure}
\hfill
\begin{subfigure}{0.32\textwidth}
    \includegraphics[width=\textwidth]{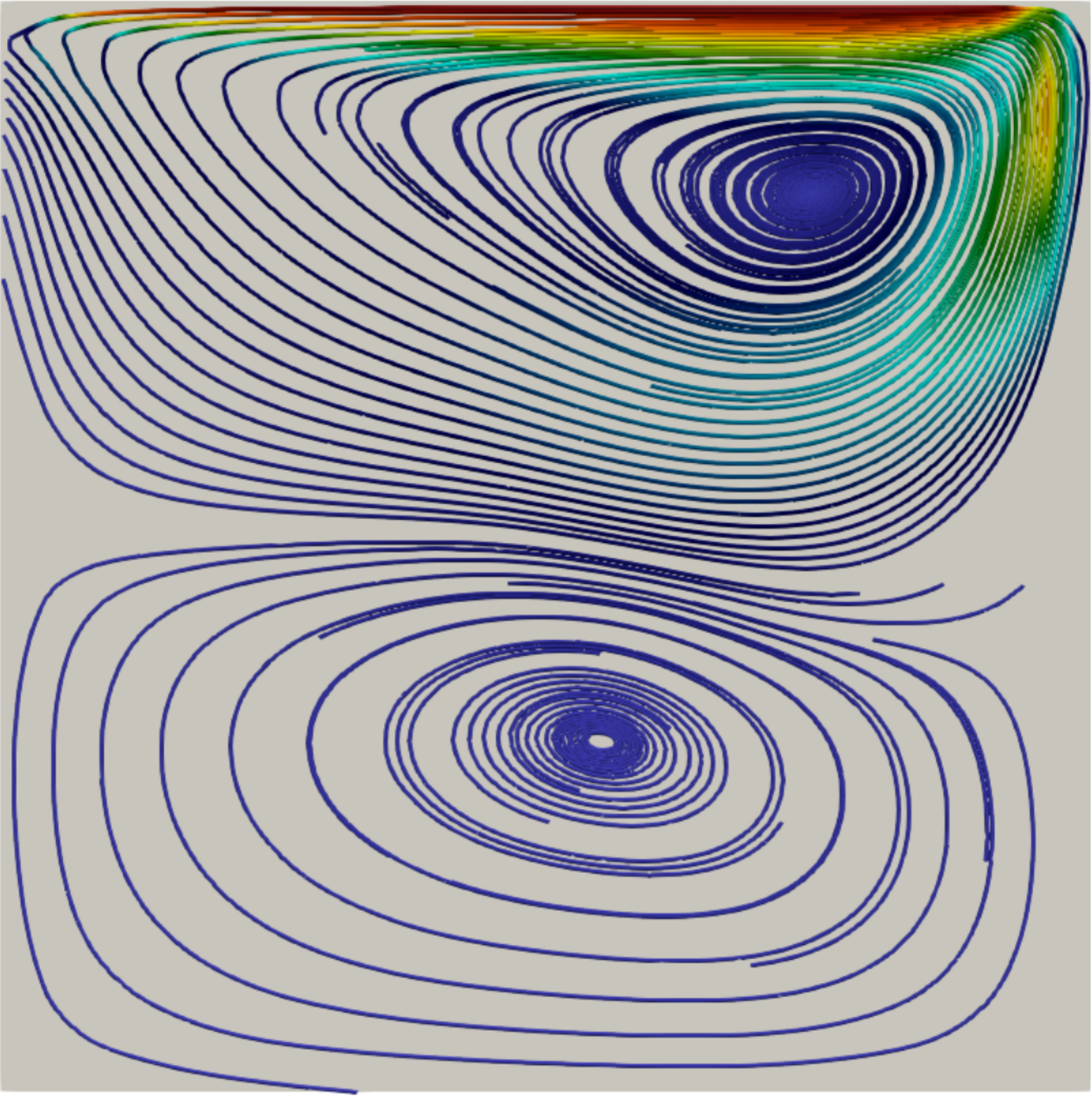}
    \caption{Re=400}
\end{subfigure}
\caption{$y=0.5$ plane streamlines with $\kappa=1$ for three-dimensional lid-driven cavity problem.}
\label{fig:3DCavity_kappa=1}
\end{figure}
\begin{figure}[H]
\centering
\hfill
\begin{subfigure}{0.32\textwidth}
    \includegraphics[width=\textwidth]{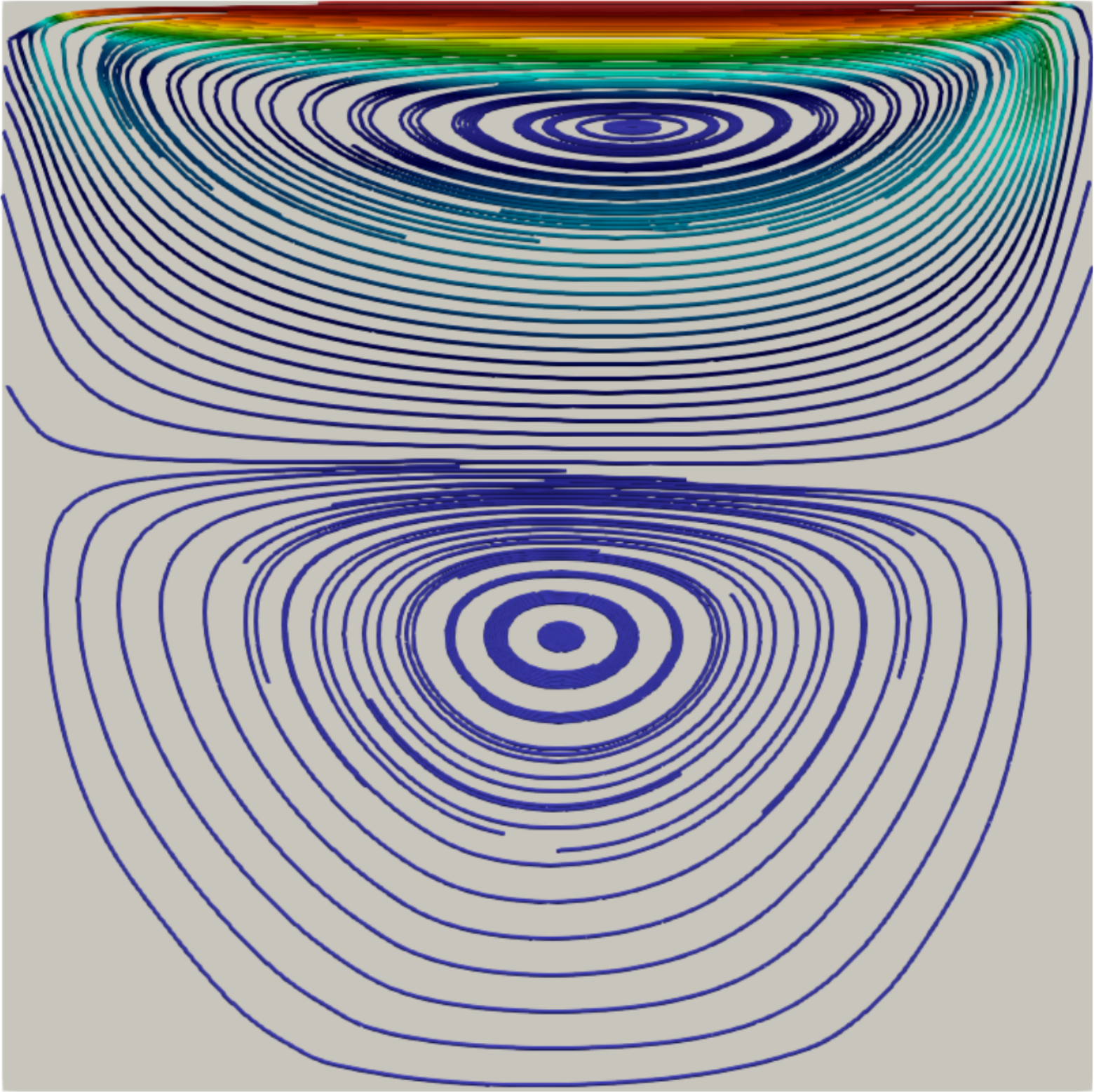}
    \caption{Re=100}
\end{subfigure}
\hfill
\begin{subfigure}{0.32\textwidth}
    \includegraphics[width=\textwidth]{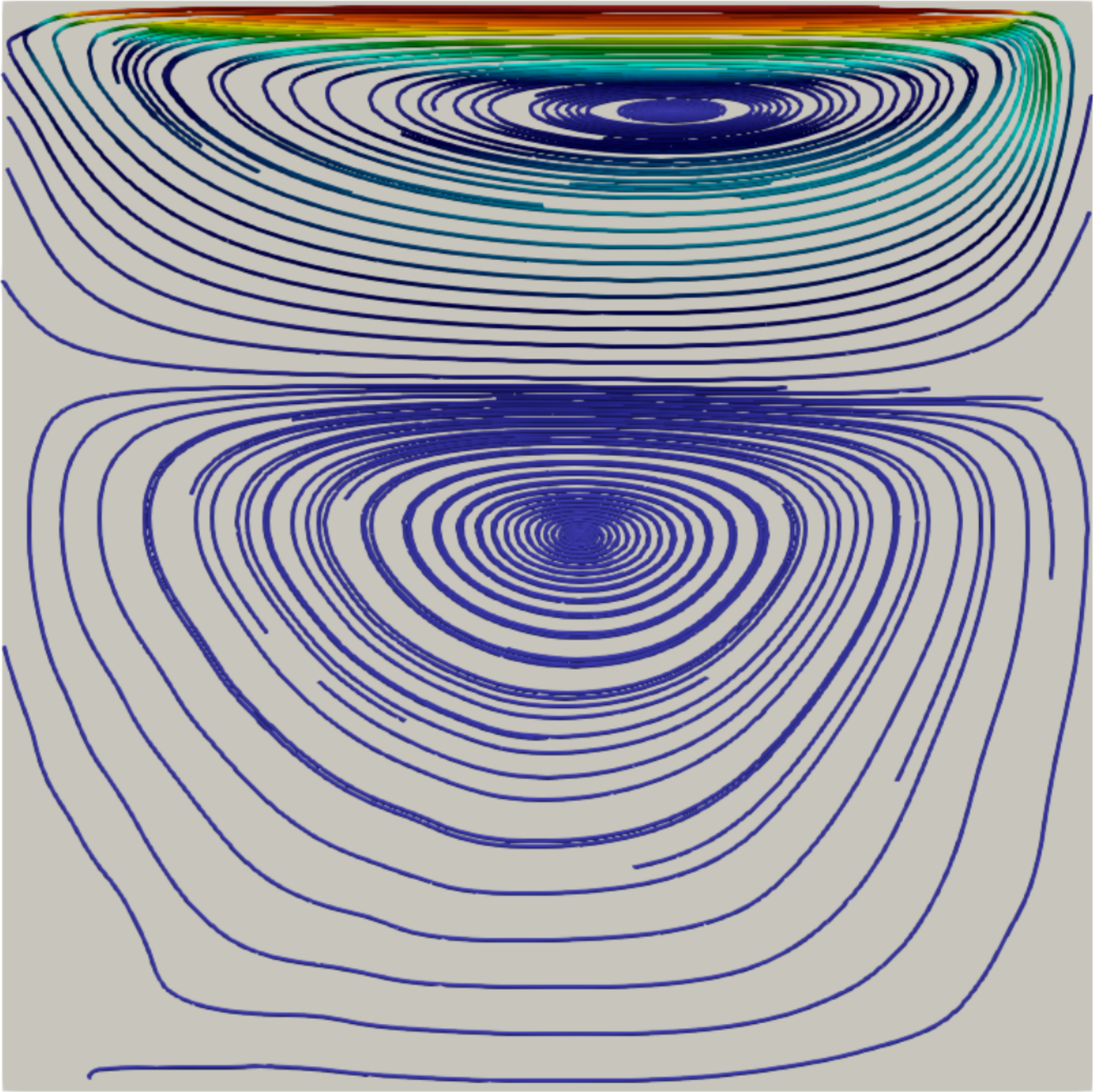}
    \caption{Re=200}
\end{subfigure}
\hfill
\begin{subfigure}{0.32\textwidth}
    \includegraphics[width=\textwidth]{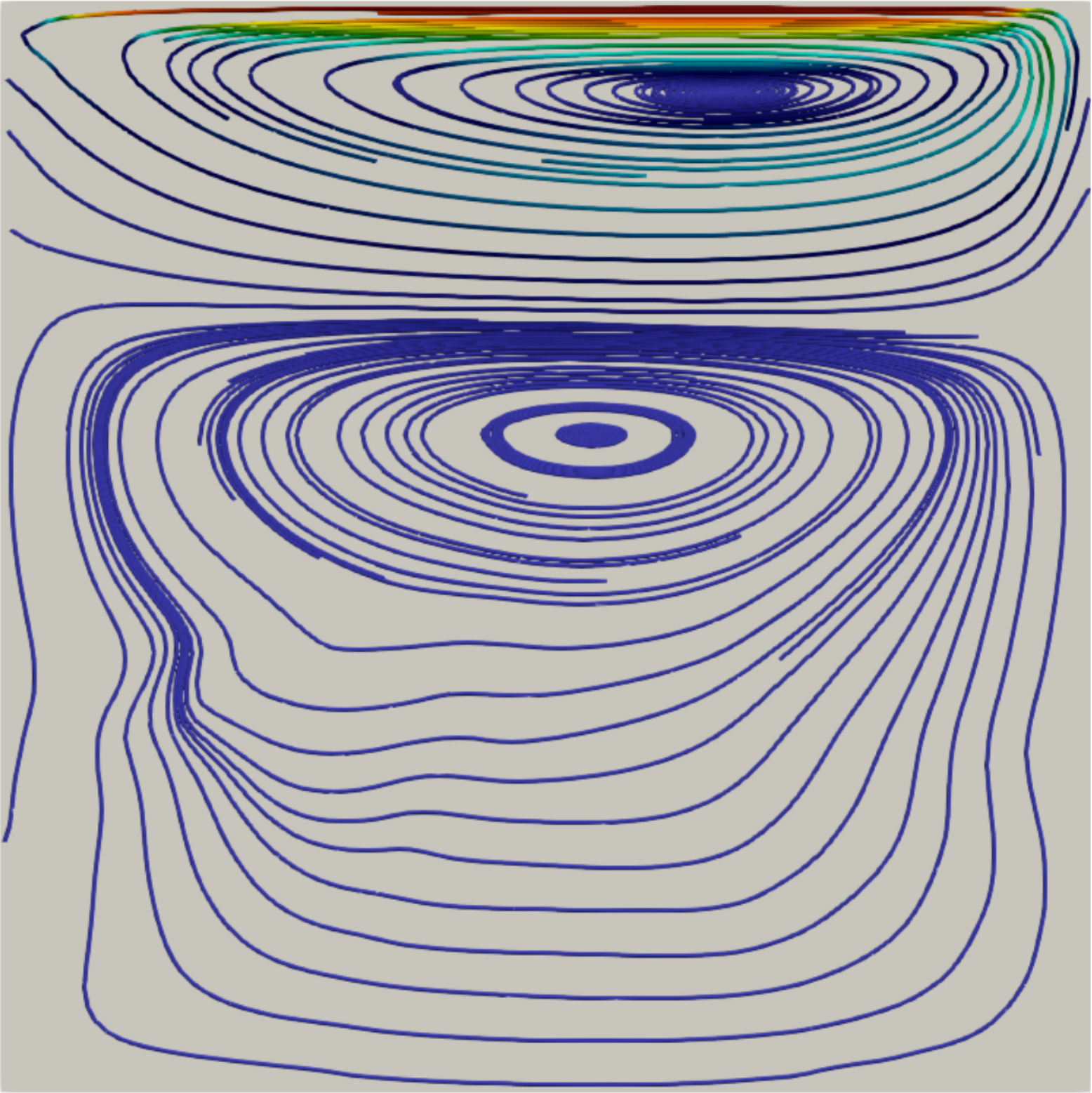}
    \caption{Re=400}
\end{subfigure}
\caption{$y=0.5$ plane streamlines with $\kappa=10$ for three-dimensional lid-driven cavity problem.}
\label{fig:3DCavity_kappa=10}
\end{figure}
\section{Conclusion}
In this paper, we introduce a novel AH method for solving steady Inductionless MHD equations. The momentum and the incompressibility equations are treated using the AH method of \cite{TC23}, while novel streamline-diffusion inspired AH terms are added in the $\bJ$ equation. We have shown theoretical convergence results under expected small data conditions. The resulting iterative scheme decouples the equations for each problem variable. Numerical experiments show that the novel AH method correctly and efficiently simulates benchmark tests.

\bibliography{reference}
\bibliographystyle{plain}
\end{document}